\providecommand{\U}[1]{\protect \rule{.1in}{.1in}}
\newtheorem{theorem}{Theorem}
\newtheorem{lemma}{Lemma}
\newtheorem{proposition}{Proposition}
\newtheorem{remark}{Remark}
\newtheorem{definition}{Definition}
\newtheorem{corollary}{Corollary}
\newtheorem{example}{Example}
\numberwithin{equation}{section}
\begin{document}
\title[Gradient Shrinking Sasaki-Ricci Solitons on Sasakian Manifolds]{Gradient Shrinking Sasaki-Ricci Solitons on Sasakian Manifolds of Dimension Up
to Seven}
\author{$^{^{\dag}}$Der-Chen Chang}
\address{$^{^{\dag}}$Department of Mathematics and Statistics, Georgetown University,
Washington D. C. 20057, USA\\
Graduate Institute of Business Administration, College of Management, Fu Jen
Catholic University, Taipei 242, Taiwan, R.O.C.}
\email{chang@georgetown.edu}
\author{$^{\ast}$Shu-Cheng Chang}
\address{$^{\ast}$Department of Mathematics, National Taiwan University, Taipei, Taiwan}
\email{scchang@math.ntu.edu.tw}
\author{$^{\dag}$Yingbo Han}
\address{$^{\dag}${School of Mathematics and Statistics, Xinyang Normal University}\\
Xinyang,464000, Henan, P.R. China}
\email{{yingbohan@163.com}}
\author{$^{\dag \dag}$Chien Lin}
\address{$^{\dag \dag}$Mathematical Science Research Center, Chongqing University of
Technology, 400054, Chongqing, P.R. China}
\email{chienlin@cqut.edu.cn}
\author{$^{\ast \ast}$Chin-Tung Wu}
\address{$^{\ast \ast}$Department of Applied Mathematics, National Pingtung University,
Pingtung 90003, Taiwan}
\email{ctwu@mail.nptu.edu.tw }
\thanks{$^{\dag}$Der-Chen Chang is partially supported by an NSF grant DMS-1408839 and
a McDevitt Endowment Fund at Georgetown University. $^{\ast} $Shu-Cheng Chang
and $^{\ast \ast}$Chin-Tung Wu are partially supported in part by the MOST of
Taiwan. $^{\dag}$Yingbo Han is partially supported by an NSFC 11971415 and
Nanhu Scholars Program for Young Scholars of {Xinyang Normal University}.
$^{\dag \dag}$Chien Lin is partially supported by the Project of the Ministry
of Science and Technology of China (Grant QN2022035003L)}

\begin{abstract}
In this paper, we show that the uniform $L^{4}$-bound of the transverse Ricci
curvature along the Sasaki-Ricci flow on a compact quasi-regular transverse
Fano Sasakian $(2n+1)$-manifold $M.$ When $M$ is dimension up to seven and the
space of leaves of the characteristic foliation is well-formed, we first show
that any solution of the Sasaki-Ricci flow converges in the Cheeger-Gromov
sense to the unique singular orbifold Sasaki-Ricci soliton on
$M_{\mathrm{\infty}\text{ }}$which is a $S^{1}$ -orbibundle over the unique
singular K\"{a}hler-Ricci soliton on a normal projective variety with
codimension two orbifold singularities. Secondly, for $n=1,$ we show that
there are only two nontrivial Sasaki-Ricci solitons on a compact quasi-regular
Fano Sasakian three-sphere with its leave space $Z_{m_{1}}$-teardrop and
$Z_{m_{1},m_{2}}$-football, respectively. For $n=2,3,$ we show that the
Sasaki-Ricci soliton is trivial one if $M$ is transverse $K$-stable.

\end{abstract}
\keywords{Sasaki-Ricci flow, Sasaki-Ricci soliton, Transverse Fano Sasakian manifold,
Transverse Mabuchi $K$-energy, Transverse Sasaki-Futaki invariant, Transverse
$K$-stable, Well-formed, Foliation singularities.}
\subjclass{Primary 53E50, 53C25; Secondary 53C12, 14E30.}
\maketitle
\tableofcontents

\section{Introduction}

Sasakian geometry is very rich as the odd-dimensional analogous of K\"{a}hler
geometry. A Sasaki-Einstein $5$-manifold is to say that its K\"{a}hler cone is
a Calabi-Yau threefold. Such manifolds provide interesting examples of the
AdS/CFT correspondence. By the second structure theorem for Sasakian
manifolds, there is a $\mathbf{S}^{1}$-action with the Reeb vector field which
generates the finite isotropy groups. Only if the isotropy subgroup of every
point is trivial is the regular free action. They are regular Sasakian
manifolds and its leave spaces are smooth K\"{a}hler surfaces. In general, the
space of leaves has at least the codimension two fixed point set of every
non-trivial isotropy subgroup or the codimension one fixed point set of some
non-trivial isotropy subgroup. They are quasi-regular Sasakian manifolds and
its $\mathbf{S}^{1}$-fibrations will be orbifold surfaces with orbifold
singularities (cf. section $2$). Furthermore, there is a well-known
classification of \ compact Fano K\"{a}hler-Einstein smooth surfaces due to
Tian-Yau and then leads to a first classification of all compact regular
Sasaki-Einstein $5$-manifolds. There are quasi-regular Sasaki-Einstein metrics
on connected sums of $\mathbf{S}^{2}\times \mathbf{S}^{3},$ rational homology
$5$-spheres and connected sums of these. The very first examples of irregular
Sasaki-Einstein metric on $\mathbf{S}^{2}\times \mathbf{S}^{3}$ was constructed
in \cite{gmsw}. We refer to \cite{bg}, \cite{sp} and references therein.

On the other hand, the class of simply connected, closed, oriented, smooth,
$5$-manifolds is classifiable under diffeomorphism due to Smale-Barden
(\cite{s}, \cite{b}). Then it is our goal to focus on the existence of
Sasaki-Einstein metrics or Sasaki-Ricci solitons in a compact quasi-regular
Sasakian manifolds of dimension five and seven.\emph{\ }More precisely, by the
first structure theorem (cf. section $2$), any Hodge orbifold gives rise to a
Sasakian manifold, one expects that existence problems of Sasaki-Einstein
metrics as well as Sasaki-Ricci solitons should be intimately related to
existence problems on such corresponding Hodge orbifolds. However, people has
seen some success in finding K\"{a}hler-Einstein by the K\"{a}hler-Ricci flow
on K\"{a}hler manifolds.

Along this spirits, in this paper we will focus on the following Sasaki-Ricci
flow
\begin{equation}%
\begin{array}
[c]{c}%
\frac{\partial}{\partial t}\omega(t)=\omega(t)-\mathrm{Ric}_{\omega(t)}%
^{T},\text{ }\omega(0)=\omega_{0}%
\end{array}
\label{2022}%
\end{equation}
which is introduced by Smoczyk--Wang--Zhang (\cite{swz}) to study the
existence of Sasaki $\eta$-Einstein metrics on Sasakian manifolds. They showed
that the flow has the longtime solution and asymptotic converges to a Sasaki
$\eta$-Einstein metric when the basic first Chern class is negative
($c_{1}^{B}(M)<0$) or null ($c_{1}^{B}(M)=0$). It is wild open when a compact
Sasakian $(2n+1)$-manifold is transverse Fano ($c_{1}^{B}(M)>0).$ In the paper
of \cite{cj}, Collins and Jacob proved that the Sasaki-Ricci flow converges
exponentially fast to a Sasaki-Einstein metric if one exists, provided the
automorphism group of the transverse holomorphic structure is trivial. In
general, by comparing the K\"{a}hler-Ricci flow on log Fano varieties as in
(\cite{bbegz}), it is hard to deal with because the space of leaves of the
characteristic foliation is a polarized, normal projective variety which
endowed with the orbifold structure due to (\ref{41}).

In section $3$, we first start to consider the most simple case for $n=1$, in
particular, the space $Z$ of leaves will be $(\mathbf{S}^{2},g)$ with branch
divisors $k$ marked points as in Theorem \ref{T22}. \ More precisely, let
$(M,\xi,\eta,g,\Phi)$ be a compact transverse Fano Sasakian $3$-manifold. It
follows from \cite{gei} that any Sasakian $3$-manifold $M$ is either
canonical, anticanonical or null. $M$ is up to finite quotient a regular
Sasakian $3$-manifold, i.e., a circle bundle over a Riemann surface of
positive genus. In the positive case, $M$ is finite covered by $\mathbf{S}%
^{3}$ and its Sasakian structure is a deformation of a standard Sasakian
structure on $\mathbf{S}^{3}$.

Now first we will focus on the Sasaki-Ricci flow (\ref{1}) on the transverse
Fano three-sphere $\mathbf{S}^{3}.$ In fact, it is known that

\begin{proposition}
\label{P21-2} (\cite{wz}, \cite{he}) For any initial Sasakian structure on
$(\mathbf{S}^{3},\xi,g_{0})$ with the positive transverse scalar curvature,
the Sasaki-Ricci flow (\ref{1}) converges exponentially to a gradient
shrinking Sasaki-Ricci soliton%
\[
\nabla_{i}\nabla_{j}f-\frac{1}{2}(\triangle_{B}f)g_{ij}=0.
\]
Here $f$ is the basic function defined by $\triangle_{B}f=R^{T}-r.$ Moreover,
the Sasaki-Ricci soliton metric is a simple Sasaki metric which can be
deformed to the round metric on $\mathbf{S}^{3}$ through a simple deformation.
\end{proposition}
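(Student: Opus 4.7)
The plan is to descend the Sasaki-Ricci flow on $\mathbf{S}^{3}$ to a normalized K\"{a}hler-Ricci flow on the two-dimensional leaf space $Z$ of the characteristic foliation and to invoke the convergence theory for the Ricci flow on Riemann surfaces and orbifold surfaces. By the Geiges classification recalled in the text, a Sasakian structure on $\mathbf{S}^{3}$ with $c_{1}^{B}(M)>0$ is, up to a transverse holomorphic deformation, quasi-regular, so that $Z$ is a compact complex orbifold of real dimension two with positive Euler characteristic; namely, $\mathbf{S}^{2}$ or a $Z_{m_{1}}$-teardrop or a $Z_{m_{1},m_{2}}$-football. The basic Sasaki-Ricci flow (\ref{2022}) then descends to $\partial_{t}\omega_{Z}=\omega_{Z}-\mathrm{Ric}(\omega_{Z})$, which is the normalized Ricci flow $\partial_{t}g_{Z}=(r-R)g_{Z}$ on the (orbifold) surface $Z$ in real dimension two.

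First I would verify that positivity of the transverse scalar curvature is preserved along the flow. In real dimension two this follows from the evolution equation $\partial_{t}R=\triangle R+R(R-r)$ and the maximum principle. Next I would invoke the Hamilton-Chow convergence theorem for the normalized Ricci flow on $\mathbf{S}^{2}$, or its orbifold extension due to L.-F. Wu and Chow-Wu for the teardrop and football. With positivity preserved, Hamilton's entropy monotonicity gives a uniform entropy bound, the differential Harnack inequality controls the oscillation of $R$, and one obtains uniform $C^{\infty}$ estimates for $g_{Z}(t)$ modulo transverse diffeomorphisms generated by $\nabla f(t)$, where $f(t)$ is the normalized potential with $\triangle_{B}f=R^{T}-r$. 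Cheeger-Gromov subsequential convergence then yields a gradient shrinking Ricci soliton on $Z$ satisfying $\nabla_{i}\nabla_{j}f=\tfrac{1}{2}(\triangle f)g_{ij}$; pulling back via the quotient $\pi:M\to Z$ turns $f$ into a basic function on $\mathbf{S}^{3}$ and the equation becomes precisely the transverse soliton equation in the statement, since the Reeb direction contributes trivially.

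To promote subsequential convergence to exponential convergence, I would linearize the flow around the limit soliton. The modified Sasaki-Ricci flow has a strictly negative Lichnerowicz-type operator on the basic $(1,1)$-forms orthogonal to the transverse holomorphic vector fields generated by $\nabla f$, obtained from Perelman's $\mathcal{W}$-entropy adapted to the Sasaki setting along the lines of the Chen-Tian argument for the K\"{a}hler-Ricci flow. The resulting spectral gap gives exponential decay of the distance to the soliton, uniqueness of the limit, and upgrades subsequential convergence to genuine convergence.

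For the simple-Sasaki and round-metric deformation claim, observe that $J^{T}\nabla f$ is a transverse Killing vector field for the limit soliton, so the soliton metric admits a nontrivial $\mathbf{S}^{1}$ in its transverse automorphism group and is therefore of simple type. A smooth path to the round Sasakian structure on $\mathbf{S}^{3}$ can then be built by composing a transverse homothety with a rescaling of the Reeb field along Belgun's family of deformations of the standard Sasakian structure. The principal obstacle I expect is the teardrop case, where no K\"{a}hler-Einstein orbifold metric exists on $Z_{m_{1}}$ by Troyanov's obstruction; the limit is a genuinely non-Einstein shrinking soliton, and one has to carry out Wu's orbifold version of the Hamilton-Chow argument carefully in local uniformizing charts around the cone point, verifying that the Harnack estimate and the soliton identity pass through the orbifold singularities.
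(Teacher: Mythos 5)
The paper does not prove Proposition \ref{P21-2} at all: it is quoted verbatim from Wang--Zhang \cite{wz} and He \cite{he}, so the relevant comparison is with the cited proof. That proof runs Hamilton's and Chow's surface program \emph{transversally on $\mathbf{S}^{3}$ itself}: the entropy estimate, the differential Harnack inequality, and the curvature bounds are all formulated for the basic function $R^{T}$ and the transverse K\"ahler metric $g^{T}$, using only that the relevant quantities are basic and that integration by parts against $\omega^{n}\wedge\eta$ behaves as in the surface case. At no point does one pass to the space of leaves.

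Your proposal instead descends to the quotient $Z$, and this is where the genuine gap lies. The proposition is asserted for \emph{any} initial Sasakian structure on $\mathbf{S}^{3}$ with $R^{T}>0$, and $\mathbf{S}^{3}$ carries irregular Sasakian structures (weighted Reeb fields $\xi=a_{1}\partial_{\theta_{1}}+a_{2}\partial_{\theta_{2}}$ with $a_{1}/a_{2}$ irrational), for which the generic Reeb orbit is not closed and the leaf space is not an orbifold surface --- there is no $Z$ on which to run the Hamilton--Chow--Wu flow. Your attempted repair, that a transverse Fano Sasakian structure on $\mathbf{S}^{3}$ is ``up to a transverse holomorphic deformation, quasi-regular,'' does not work: Type~II (transverse holomorphic) deformations fix the Reeb field $\xi$ and hence cannot change (ir)regularity, while the second structure theorem approximates an irregular structure by quasi-regular ones only by \emph{changing} $\xi$, which changes the transverse geometry and the flow itself; the convergence statement is for the fixed $\xi$. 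In the quasi-regular case your reduction to Wu's orbifold Ricci flow on the teardrop and football is essentially sound (and is consistent with how the paper later uses Propositions \ref{P22} and \ref{P23} to classify the limits in Theorem \ref{T22}), but to prove the proposition as stated you must either restrict it to quasi-regular structures or redo the argument transversally on $M$ as in \cite{wz}. A secondary, smaller issue: the exponential rate of convergence is asserted but only sketched via a spectral-gap heuristic; in \cite{wz} this is extracted from the Harnack and entropy estimates directly, and for the nontrivial soliton limits (teardrop, asymmetric football) the linearization must be taken about a non-Einstein soliton, which requires more care than the Chen--Tian K\"ahler--Einstein linearization you invoke.
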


Our first goal is to remove the positive assumption of the initial transverse
scalar curvature, we prove that the transverse scalar curvature $R^{T}$
becomes positive in finite time under the Sasaki-Ricci flow. Then, by applying
Proposition \ref{P21-2}, we have

\begin{theorem}
\label{T21} For any initial Sasakian structure on the transverse Fano
three-sphere $(\mathbf{S}^{3},\xi,g_{0})$, the Sasaki-Ricci flow converges
exponentially to a gradient shrinking Sasaki-Ricci soliton.
\end{theorem}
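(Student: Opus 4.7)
The strategy is to reduce the theorem to Proposition \ref{P21-2} by showing that the transverse scalar curvature $R^T$ becomes strictly positive at some finite time. Long-time existence of the Sasaki-Ricci flow on a compact Sasakian manifold is known from \cite{swz}, so it suffices to produce $t_{0}>0$ with $R^T(\cdot,t_{0})>0$ on all of $\mathbf{S}^3$; Proposition \ref{P21-2} then applies to the flow restarted at $t_{0}$, and concatenation yields exponential convergence to a gradient shrinking Sasaki-Ricci soliton.

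The analytical core is therefore the finite-time positivization of $R^T$. Because $\mathbf{S}^3$ is transversely one complex-dimensional, $|\mathrm{Ric}^T|^2=\tfrac{1}{2}(R^T)^2$, and a direct computation from the Sasaki-Ricci flow equation yields an evolution of the form
\begin{equation*}
\frac{\partial R^T}{\partial t}=\Delta_{B}R^T+R^T\!\left(\tfrac{1}{2}R^T-1\right).
\end{equation*}
A parabolic maximum principle applied to the basic function $R^T_{\min}(t)$ shows that $R^T_{\min}$ is monotone nondecreasing while it is negative, but on its own gives only $R^T_{\min}(t)\nearrow 0^-$ asymptotically; it does not by itself force strict positivity in finite time. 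Removing this obstruction is the central difficulty of the proof.

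To cross zero in finite time, I would adapt Hamilton's argument for the Ricci flow on $\mathbf{S}^2$, together with its orbifold extension due to Chow, into the transverse K\"ahler framework. Introduce the basic potential $u$ with $\Delta_{B}u=R^T-r$, where $r>0$ is the (topologically determined) positive average associated with $c_{1}^{B}(M)>0$, and track a Hamilton-type quantity such as
\begin{equation*}
H:=R^T+|\nabla u|_{g^{T}}^{2}-r\,u,
\end{equation*}
together with a Li-Yau type differential Harnack estimate derived from its evolution equation. Combined with the positive mean value $r$, the Harnack bound excludes the asymptotic behaviour $R^T_{\min}\nearrow 0^-$ and forces some finite $t_{0}$ with $R^T(\cdot,t_{0})>0$. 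The main technical hurdle is performing every ingredient --- basic Hodge theory, maximum and Harnack principles for basic functions, and the orbifold corrections stemming from the quasi-regular leaf structure of $\mathbf{S}^3$ --- entirely within the transversely holomorphic, foliated category rather than on a smooth K\"ahler surface.

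Once $R^T(\cdot,t_{0})>0$ is established, Proposition \ref{P21-2} applied to the Sasaki-Ricci flow restarted at $t_{0}$ gives exponential convergence to a gradient shrinking Sasaki-Ricci soliton, which, glued to the finite interval $[0,t_{0}]$, yields the theorem.
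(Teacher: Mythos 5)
Your proposal follows essentially the same route as the paper: the paper also reduces Theorem \ref{T21} to Proposition \ref{P21-2} by proving that $R^{T}$ becomes positive in finite time, and it does so via the Hamilton--Chow Harnack inequality for the transverse scalar curvature, comparing $R^{T}$ with the ODE subsolution $s(t)=r/(1-ce^{rt})<0$ and using the uniform bounds on $R^{T}$ and the diameter (Lemma \ref{L61}) to control the distance term, so that $R^{T}-s\geq C>0$ while $s\to 0^{-}$ exponentially. The one point to watch is your choice of monotone quantity: the sign-changing case is handled in the paper (following Chow and Hamilton's Section~8) with $L=\log(R^{T}-s)$ and $Q=\partial_{t}L-|\nabla L|^{2}-s$, whereas the quantity $H=R^{T}+|\nabla u|^{2}-ru$ you write down is Hamilton's tool for the already-positive case and does not by itself force the crossing of zero.
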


As in the paper of \cite{h1}, there are no soliton solutions other than those
of constant curvature on a compact surface. However, bad orbifold surfaces do
not admit metrics of constant curvature (\cite{wu}). Then it is very
interested to know whether the Sasaki-Ricci soliton is the trivial one.

It follows from Proposition \ref{P22} and Proposition \ref{P23} that we have
the following classification of Sasaki-Ricci solitons in a compact
quasi-regular Fano Sasakian three-sphere.

\begin{theorem}
\label{T22} Let $(\mathbf{S}^{3},\xi,g_{0})$ be a compact quasi-regular Fano
Sasakian three-sphere and an $\mathbf{S}^{1}$-orbibundle $\pi:(\mathbf{S}%
^{3},g_{0})\rightarrow(\mathbf{S}^{2},g)$ with $k$ marked points
$\{p_{i}\}_{i=1}^{k}$ of the ramification index $m_{i}$ and a metric $g$ on
$\mathbf{S}^{2}$ with a conical singularity of cone angle $\frac{2\pi}{m_{i}}$
at $p_{i}$. Then
\[
k\leq3
\]
so that

\begin{enumerate}
\item For $k=1:$ We have
\[
\chi(\mathbf{S}^{2},\beta)=1+\frac{1}{m_{1}}%
\]
such that the Sasaki-Ricci soliton is nontrivial and $(\mathbf{S}^{2},\beta) $
is a $Z_{m_{1}}$-teardrop.

\item For $k=2:$ We have
\[
\chi(\mathbf{S}^{2},\beta)=\frac{1}{m_{1}}+\frac{1}{m_{2}}%
\]
such that

\begin{enumerate}
\item if $m_{1}=m_{2}$, then the Sasaki-Ricci soliton is trivial with constant
transverse scalar curvature.

\item if $m_{1}\neq m_{2},$ then the Sasaki-Ricci soliton is nontrivial and
$(\mathbf{S}^{2},\beta)$ is the $Z_{m_{1},m_{2}}$-football.
\end{enumerate}

\item For $k=3:$ We have
\[
\chi(\mathbf{S}^{2},\beta)=\frac{1}{m_{1}}+\frac{1}{m_{2}}+\frac{1}{m_{3}}-1
\]
such that the only possible positive numbers for $(m_{1},m_{2},m_{3})$ are
\[
(m,2,2);(3,3,2);(4,3,2);(5,3,2)
\]
and then (\ref{1aa}) holds. Therefore the Sasaki-Ricci soliton is trivial with
constant transverse scalar curvature.
\end{enumerate}
\end{theorem}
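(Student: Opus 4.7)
The plan is to transfer the question to the leaf space $(\mathbf{S}^{2},\beta)$ of the characteristic foliation and apply $2$-dimensional Ricci soliton theory on orbifolds. By Theorem \ref{T21}, the Sasaki-Ricci flow on $(\mathbf{S}^{3},\xi,g_{0})$ converges to a gradient shrinking Sasaki-Ricci soliton with basic potential $f$ satisfying $\nabla_{i}\nabla_{j}f-\tfrac{1}{2}(\triangle_{B}f)g_{ij}=0$. Since all basic tensors are invariant along the Reeb flow, they descend through the $\mathbf{S}^{1}$-orbibundle $\pi$ to a gradient shrinking Ricci soliton on the orbifold two-sphere $(\mathbf{S}^{2},\beta)$ with cone points of angle $2\pi/m_{i}$ at $p_{i}$.

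Then I would apply the orbifold Gauss-Bonnet formula
\[
\chi(\mathbf{S}^{2},\beta)=2-\sum_{i=1}^{k}\Bigl(1-\tfrac{1}{m_{i}}\Bigr).
\]
The transverse Fano hypothesis $c_{1}^{B}(M)>0$ forces $\chi(\mathbf{S}^{2},\beta)>0$, and since each $m_{i}\geq 2$ we have $1-1/m_{i}\geq 1/2$, so $k/2<2$ and hence $k\leq 3$. Substituting $k=1,2,3$ yields the three Euler-characteristic formulas in the statement, and for $k=3$ the inequality $1/m_{1}+1/m_{2}+1/m_{3}>1$ with $m_{i}\geq 2$ admits exactly the four solution triples $(m,2,2)$, $(3,3,2)$, $(4,3,2)$, $(5,3,2)$ listed.

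The classification into trivial versus nontrivial solitons then rests on the good/bad dichotomy for orbifold $2$-spheres: by \cite{wu}, the only \emph{bad} $2$-orbifolds on $\mathbf{S}^{2}$ are the teardrop ($k=1$) and the asymmetric football ($k=2$ with $m_{1}\neq m_{2}$); all remaining cases — the symmetric football and the four triples for $k=3$ — are \emph{good}, arising as quotients of the round $\mathbf{S}^{2}$ by finite subgroups of $SO(3)$ and thus carrying a metric of constant curvature. For the good cases I would adapt Hamilton's $2$D argument \cite{h1}, using the evolution of the pressure, a Kazdan-Warner-type identity, and the entropy monotonicity to force the basic Hessian $\nabla_{B}^{2}f$ to vanish, so that $f$ is constant and the transverse scalar curvature $R^{T}$ is constant, giving the trivial soliton with the Chern-number identity (\ref{1aa}). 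For the two bad cases, the same obstruction that rules out constant curvature prevents $f$ from being constant, and a nontrivial shrinking Sasaki-Ricci soliton exists, constructed as the Sasakian/orbibundle lift of the Koiso-Cao rotationally symmetric soliton on $\mathbf{S}^{2}$ and its football analogue on weighted projective lines.

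The main obstacle I anticipate is the faithful adaptation of Hamilton's $2$D entropy and Kazdan-Warner arguments to the transverse/foliated setting, ensuring that integrations by parts against the orbifold volume form are valid at the conical singularities of $g$. This should be circumvented by performing all estimates upstairs on the smooth manifold $\mathbf{S}^{3}$ using the basic Laplacian $\triangle_{B}$ acting on basic functions, so that the cone points never enter the analysis directly and Hamilton's trichotomy transfers cleanly to every good orbifold case.
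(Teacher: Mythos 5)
Your proposal is correct and follows essentially the same route as the paper: reduce via the $\mathbf{S}^{1}$-orbibundle to the marked $2$-sphere, compute $\chi(\mathbf{S}^{2},\beta)=2-\sum_{i}(1-\tfrac{1}{m_{i}})$, use the transverse Fano positivity together with $m_{i}\geq 2$ to force $k\leq 3$ and enumerate the triples, and then invoke the good/bad dichotomy for $2$-orbifolds (teardrop and asymmetric football bad, hence nontrivial rotationally symmetric solitons; all other cases good, hence constant curvature and trivial soliton). The only cosmetic difference is that you propose to re-derive the two-dimensional inputs by adapting Hamilton's entropy/Kazdan--Warner argument transversally upstairs on $\mathbf{S}^{3}$, whereas the paper simply quotes these as Propositions \ref{P22} and \ref{P23} (citing \cite{wu}, \cite{bm}, \cite{pssw}); your check that the four triples satisfy the stability condition (\ref{1aa}) is the same verification the paper leaves implicit.
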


As a consequence of Theorem \ref{T22}, we have

\begin{corollary}
There are only two nontrivial Sasaki-Ricci solitons on a compact quasi-regular
Fano Sasakian three-sphere with its leave space $Z_{m_{1}}$-teardrop and
$Z_{m_{1},m_{2}}$-football, respectively.
\end{corollary}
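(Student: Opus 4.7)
The corollary follows by inspecting case-by-case the trichotomy produced by Theorem \ref{T22}, so the plan is essentially organizational rather than technical. First I would invoke Theorem \ref{T22} to reduce to the three possibilities $k\in\{1,2,3\}$ for the number of marked points on the leaf space $(\mathbf{S}^{2},\beta)$, and then read off from each case exactly when the Sasaki--Ricci soliton on $(\mathbf{S}^{3},\xi,g_{0})$ is nontrivial.

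For $k=1$ the statement of Theorem \ref{T22} asserts that $(\mathbf{S}^{2},\beta)$ is the $Z_{m_{1}}$-teardrop and that the soliton is nontrivial; this yields one family. For $k=2$, one further distinguishes by the relation between the orders $m_{1}$ and $m_{2}$: when $m_{1}=m_{2}$ the leaf space is a good orbifold admitting a transverse K\"ahler--Einstein metric and the soliton must reduce to the trivial one with constant transverse scalar curvature, whereas when $m_{1}\neq m_{2}$ the leaf space is a genuinely bad orbifold, namely the $Z_{m_{1},m_{2}}$-football, and the soliton is nontrivial; this produces the second family. Finally, for $k=3$ the allowed triples $(m_{1},m_{2},m_{3})$ are the standard spherical triples $(m,2,2)$, $(3,3,2)$, $(4,3,2)$, $(5,3,2)$, which are all good orbifolds, and the soliton is trivial by the corresponding assertion of Theorem \ref{T22}.

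Collecting these observations, the only two possibilities for a nontrivial Sasaki--Ricci soliton on a compact quasi-regular Fano Sasakian three-sphere are the one over the $Z_{m_{1}}$-teardrop and the one over the $Z_{m_{1},m_{2}}$-football with $m_{1}\neq m_{2}$, which is precisely the claim of the corollary. Since every step is a direct reading of Theorem \ref{T22}, there is no real obstacle at this level; the substantive content has already been absorbed into the proofs of Propositions \ref{P22} and \ref{P23} that feed into Theorem \ref{T22}, and in particular into the obstruction showing why $k\leq 3$ and why the $k=3$ spherical triples force triviality via the existence of a transverse K\"ahler--Einstein metric on the quotient orbifold.
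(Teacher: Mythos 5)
Your proposal is correct and follows exactly the paper's route: the corollary is stated there as an immediate consequence of Theorem \ref{T22}, obtained by reading off the three cases $k=1,2,3$ and noting that only the $Z_{m_{1}}$-teardrop ($k=1$) and the $Z_{m_{1},m_{2}}$-football ($k=2$, $m_{1}\neq m_{2}$) yield nontrivial solitons. Nothing further is needed.
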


\begin{remark}
\begin{enumerate}
\item We recapture the classification of all Sasakian structures on compact
quasi-regular Fano Sasakian $\mathbf{S}^{3}$ without using the uniformization
of compact $2$-orbifolds. We refer to Belgun's work (\cite{bel}) for an
another proof.

\item The second structure theorem (\cite{ru}) on compact Sasakian manifolds
of dimension $2n+1$ states that any Sasakian structure $(\xi,\eta,\Phi,g)$ on
$M$ is either quasi-regular or there is a sequence of quasi-regular Sasakian
structures $(M,\xi_{i},\eta_{i},\Phi_{i},g_{i})$ converging in the
compact-open $C^{\infty}$-topology to $(\xi,\eta,\Phi,g).$
\end{enumerate}
\end{remark}

For $n\geq2$ in section $4$, we will assume that $M$ \ is a compact
quasi-regular transverse Fano Sasakian manifold and the space $Z$ of leaves is
well-formed which means its orbifold singular locus and algebro-geometric
singular locus coincide, equivalently $Z$ has no branch divisors.

Let $(M,\eta,\xi,\Phi,g)$ be a compact quasi-regular Sasakian $(2n+1)$%
-manifold and $Z=M/F_{\xi}$ denote the space of leaves of the characteristic
foliation which is well-formed, a normal projective variety with codimension
two orbifold singularities $\Sigma$. Then by the first structure theorem
again, $M$ \ is a principal $S^{1}$-orbibundle ($V$-bundle) over $Z$ which is
also a $Q$-factorial, polarized, normal projective variety such that there is
an orbifold Riemannian submersion$\ $%
\begin{equation}
\pi:(M,g)\rightarrow(Z,\omega) \label{41-1}%
\end{equation}
and
\begin{equation}
K_{M}^{T}=\pi^{\ast}(K_{\emph{Z}}^{orb}). \label{41}%
\end{equation}
If the orbifold structure of the leave space $Z$ is well-formed, then the
orbifold canonical divisor $K_{\emph{Z}}^{orb}$ and canonical divisor $K_{Z}$
are the same and thus
\[
K_{M}^{T}=\pi^{\ast}(\varphi^{\ast}K_{\emph{Z}}).
\]

We shall work on the Sasaki-Ricci flow (\ref{2022}) in a compact quasi-regular
transverse Fano Sasakian manifold $(M,\xi,\eta_{0},\Phi_{0},g_{0},\omega_{0})$
of dimension five and seven. In the paper of \cite{he}, the author proved that
the Sasaki--Ricci flow converges to a gradient Sasaki--Ricci soliton in a
compact transverse Fano Sasakian $(2n+1)$-manifold with the initial Sasaki
metric of nonnegative transverse bisectional curvature. By removing the
curvature assumption, we obtain the existence theorem of the gradient
Shrinking Sasaki-Ricci soliton metric in a compact quasi-regular transverse
Fano Sasakian manifold dimension up to seven. More precisely, first it follows
from Theorem \ref{T61}, Theorem \ref{T63}, and Corollary \ref{C62} that we have

\begin{theorem}
\label{T66} Let $(M,\xi,\eta_{0},g_{0})$ be a compact quasi-regular transverse
Fano Sasakian manifold of dimension up to seven and $(Z_{0}=M/\mathcal{F}%
_{\xi},h_{0},\omega_{h_{0}})$ denote the space of leaves of the characteristic
foliation which is a normal Fano projective K\"{a}hler orbifold surface and
well-formed with codimension two orbifold singularities $\Sigma_{0}$. Then,
under the Sasaki-Ricci flow (\ref{2022}), $(M(t),\xi,\eta(t),g(t))$ converges
to a compact quasi-regular transverse Fano Sasakian orbifold manifold
$(M_{\infty},\xi,\eta_{\infty},g_{\infty})$ with the leave space of orbifold
K\"{a}hler manifold $(Z_{\infty}=M_{\infty}/\mathcal{F}_{\xi},h_{\infty})$
which can have at worst codimension two orbifold singularities $\Sigma
_{\infty}.$ Furthermore, $g^{T}(t_{i})$ converges to a gradient Sasaki-Ricci
soliton orbifold metric $g_{\infty}^{\intercal}$ on $M_{\infty}$ with
$g_{\infty}^{\intercal}=\pi^{\ast}(h_{\infty})$ such that $h_{\infty}$ is the
smooth K\"{a}hler-Ricci soliton metric in the Cheeger-Gromov topology on
$Z_{\infty}\backslash \Sigma_{\infty}.$
\end{theorem}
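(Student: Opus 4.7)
The plan is to reduce the analysis to the Kähler--Ricci flow on the leaf space and then lift the convergence back via the $S^{1}$-orbibundle. Since $(M,\xi,\eta_{0},g_{0})$ is quasi-regular and transverse Fano, the orbifold Riemannian submersion $\pi:(M,g_{0})\to(Z_{0},\omega_{h_{0}})$ of (\ref{41-1}) intertwines the Sasaki--Ricci flow (\ref{2022}) on $M$ with the orbifold Kähler--Ricci flow
\begin{equation*}
\tfrac{\partial}{\partial t}\omega_{h(t)}=\omega_{h(t)}-\mathrm{Ric}_{\omega_{h(t)}},\qquad \omega_{h(0)}=\omega_{h_{0}},
\end{equation*}
on the Fano orbifold $Z_{0}$, because well-formedness gives $K_{M}^{T}=\pi^{\ast}K_{Z_{0}}$ and the basic transverse geometry on $M$ is pulled back from $Z_{0}$. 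First I would use Theorem~\ref{T61} (the uniform $L^{4}$ bound on the transverse Ricci curvature announced in the abstract) to obtain, after passing to the quotient, a uniform $L^{4}$ bound on the Ricci curvature of $\omega_{h(t)}$ on the smooth locus $Z_{0}\setminus\Sigma_{0}$. Since $\dim_{\mathbb{C}}Z_{0}=n\le 3$, this $L^{4}$ control is exactly in the critical/subcritical range for applying Cheeger--Colding--Tian-type compactness to a sequence of Kähler metrics with Ricci bounded in $L^{p}$, $p>\tfrac{\dim_{\mathbb{R}}}{2}$.

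Next, invoking Theorem~\ref{T63} and Corollary~\ref{C62}, which provide the convergence of the Kähler--Ricci flow on a Fano orbifold of dimension up to three to a (possibly singular) Kähler--Ricci soliton in the Cheeger--Gromov sense, I extract a sequence $t_{i}\to\infty$ and a limit orbifold $(Z_{\infty},h_{\infty})$ with codimension-two orbifold singularities $\Sigma_{\infty}$ such that $\omega_{h(t_{i})}\to\omega_{h_{\infty}}$ smoothly on $Z_{\infty}\setminus\Sigma_{\infty}$ and $h_{\infty}$ solves the shrinking Kähler--Ricci soliton equation. The singular set can only arise by bubbling off complex codimension-two subvarieties because of the $L^{4}$ Ricci control and the Perelman-type non-collapsing along the Sasaki--Ricci flow (which is standard since the flow preserves the transverse Kähler class up to scaling).

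To upgrade the orbifold convergence on $Z$ to convergence of Sasakian structures on $M$, I construct $M_{\infty}$ as the total space of the $S^{1}$-orbibundle over $(Z_{\infty},h_{\infty})$ determined by the basic Kähler class $[\omega_{h_{\infty}}]=\tfrac{1}{2\pi}c_{1}^{orb}(Z_{\infty})$; this gives a transverse Fano Sasakian orbifold with $g_{\infty}^{T}=\pi^{\ast}h_{\infty}$. The Reeb vector field $\xi$ is preserved throughout the flow, so the connection one-forms $\eta(t_{i})$ converge on $\pi^{-1}(Z_{\infty}\setminus\Sigma_{\infty})$ to a contact one-form $\eta_{\infty}$, and the gradient soliton equation on $Z_{\infty}\setminus\Sigma_{\infty}$ lifts to the gradient Sasaki--Ricci soliton equation on $M_{\infty}\setminus\pi^{-1}(\Sigma_{\infty})$ because the potential can be chosen basic.

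The main obstacle I expect is the regularity and structure of the singular set $\Sigma_{\infty}$. One must rule out the formation of real codimension-one or codimension-three singularities --- this is exactly where the hypothesis $n\le 3$ and the $L^{4}$ Ricci bound from Theorem~\ref{T61} are crucial, together with the well-formedness assumption which prevents branch-divisor-type singularities from appearing in the limit. A secondary difficulty is verifying that the connection one-forms $\eta(t_{i})$ do not degenerate across $\pi^{-1}(\Sigma_{\infty})$, which I would handle by combining the transverse volume non-collapsing with the Gauss equation relating the ambient Sasakian curvature to the transverse Kähler curvature, so that smooth convergence on the smooth locus is automatic once the orbifold convergence on $Z$ is established.
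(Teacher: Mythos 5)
Your proposal assembles exactly the ingredients the paper itself uses --- its proof of Theorem~\ref{T66} is literally ``combine Theorem~\ref{T61}, Theorem~\ref{T63} and Corollary~\ref{C62}'' --- so at bottom this is the same argument, but your framing differs in one genuine respect and leans on a few unstated inputs. You descend to the orbifold K\"ahler--Ricci flow on $Z_{0}$, run the compactness theory there, and then \emph{reconstruct} $M_{\infty}$ as the $S^{1}$-orbibundle determined by $[\omega_{h_{\infty}}]$; the paper instead works with transverse quantities on $M$ throughout, obtains $M_{\infty}$ directly as the Cheeger--Gromov limit, and only afterwards identifies its algebraic structure. Your reconstruction step is the weak point if taken on its own: to invert the first structure theorem over $Z_{\infty}$ you need to know that $Z_{\infty}$ is a normal projective orbifold carrying an integral orbifold K\"ahler class, and that is precisely the content of the partial $C^{0}$-estimate (Theorem~\ref{T65}) feeding into Corollary~\ref{C62}; Cheeger--Colding--Tian compactness alone only gives a metric length space with a codimension-two singular set. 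Two smaller corrections: the uniform $L^{4}$ bound on $\mathrm{Ric}^{T}$ is Theorem~\ref{T51}, not Theorem~\ref{T61} (the latter is the compactness statement), and the fact that the limit satisfies the \emph{soliton} equation is not a consequence of compactness either --- it comes from the monotonicity of the transverse $\mathcal{W}$-functional forcing $\int_{M}|\nabla^{T}\nabla^{T}u|^{2}\,\omega(t)^{n}\wedge\eta_{0}\to 0$ (Theorem~\ref{T42}), which is what makes the elliptic bootstrap in Theorem~\ref{T63} close up. Your identification of where the dimension restriction enters (real dimension $2n\le 6$ of $Z$ versus the exponent $p=4>n$ in the integral Ricci bound) is correct and matches the hypothesis $p>n$ of Theorem~\ref{T61}. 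Provided you cite Theorems~\ref{T51}, \ref{T42} and \ref{T65} at the indicated places, the proposal is sound.
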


Furthermore, in the following we will show that the gradient Sasaki-Ricci
soliton orbifold metric is a Sasaki-Einstein metric if $M$ is transverse
$K$-stable (Definition \ref{d61}). This is an old dimensional counterpart of
Yau-Tian-Donaldson conjecture on a compact $K$-stable K\"{a}hler manifold
(\cite{cds1}, \cite{cds2}, \cite{cds3}, \cite{t5}). It can be viewed as a
Sasaki analogue of Tian-Zhang's (\cite{tz}) and Chen-Sun-Wang's result
(\cite{csw}) for the K\"{a}hler--Ricci flow.

\begin{theorem}
\label{T67} Let $(M,\xi,\eta_{0},g_{0})$ be a compact quasi-regular transverse
Fano Sasakian manifold of dimension up to seven and $(Z_{0}=M/\mathcal{F}%
_{\xi},h_{0},\omega_{h_{0}})$ be the space of leaves of the characteristic
foliation which is well-formed with codimension two orbifold singularities
$\Sigma_{0}$. If $M$ is transverse stable, then under the Sasaki-Ricci flow,
$M(t)$ converges to a compact transverse Fano Sasakian manifold $M_{\infty}$
which is isomorphic to $M$ endowed with a smooth Sasaki--Einstein metric.
\end{theorem}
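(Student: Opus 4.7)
The plan is to combine the Cheeger--Gromov convergence supplied by Theorem \ref{T66} with a transverse Donaldson--Futaki / $K$-stability argument in order to upgrade the limiting Sasaki--Ricci soliton to a genuine Sasaki--Einstein metric on $M$ itself. First I would invoke Theorem \ref{T66} to produce the limit object: along the Sasaki--Ricci flow $(M(t),\xi,\eta(t),g(t))$ converges in the Cheeger--Gromov sense to a compact quasi-regular transverse Fano Sasakian orbifold $(M_{\infty},\xi,\eta_{\infty},g_{\infty})$, whose leaf space $(Z_{\infty},h_{\infty})$ has at worst codimension two orbifold singularities $\Sigma_{\infty}$, and the limiting transverse metric $g^{T}_{\infty}=\pi^{\ast}(h_{\infty})$ is a gradient K\"ahler--Ricci soliton on $Z_{\infty}\setminus \Sigma_{\infty}$. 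The remaining task is then to identify $M_{\infty}$ with $M$ and to show that the soliton is trivial.

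Second, I would interpret the Cheeger--Gromov limit $M_{\infty}$ as the central fiber of a transverse test configuration for $(M,\xi,\eta_{0})$. On the leaf space side, the one-parameter family of transverse automorphisms generated by the gradient of the soliton potential $f_{\infty}$ yields a $\mathbb{C}^{\ast}$-equivariant degeneration of $(Z_{0},h_{0})$ to $(Z_{\infty},h_{\infty})$, giving a test configuration in the sense of Definition \ref{d61}. Monotonicity of the transverse Mabuchi $K$-energy (equivalently Perelman's $\mathcal{W}$-functional adapted to the transverse setting) along the Sasaki--Ricci flow forces the transverse Donaldson--Futaki invariant of this test configuration to be non-positive, with equality iff the configuration is a product. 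Transverse $K$-stability therefore forces $M_{\infty}\cong M$ as transverse polarized Sasakian manifolds, and also forces the transverse Sasaki--Futaki invariant of $M$ to vanish. Combined with the soliton identity $\nabla_{i}\nabla_{j}f_{\infty}=\tfrac{1}{2}(\triangle_{B}f_{\infty})g^{T}_{ij}$, the vanishing of the Futaki invariant compels $f_{\infty}$ to be constant, so $g^{T}_{\infty}$ is transverse K\"ahler--Einstein and $g_{\infty}$ is Sasaki--Einstein.

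The hard part will be making the identification $M_{\infty}\cong M$ rigorous and simultaneously removing the residual orbifold singularities $\Sigma_{\infty}$. Concretely this requires a \emph{transverse partial $C^{0}$-estimate} along the Sasaki--Ricci flow, in order to realize the Cheeger--Gromov limit $Z_{\infty}$ as an algebraic degeneration of $Z_{0}$ inside a fixed projective space and to match orbifold structures. The key inputs would be the uniform $L^{4}$-bound on the transverse Ricci curvature established earlier in the paper, H\"ormander-type $L^{2}$-estimates for $\overline{\partial}_{B}$ on basic sections of $K_{M}^{T}=\pi^{\ast}(K_{Z}^{orb})$, and Sasakian analogues of the Donaldson--Sun tangent cone machinery on the K\"ahler cone. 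The dimension restriction $\dim_{\mathbb{R}}M\leq 7$ (i.e.\ $n\leq 3$ for the K\"ahler base) enters precisely here, since in these low dimensions the structure of tangent cones of non-collapsed Ricci limits with codimension-two singularities is sufficiently well controlled for transverse $K$-stability to rule out jumping of the complex structure and to smooth out $\Sigma_{\infty}$. This yields $M_{\infty}\cong M$ equipped with a smooth Sasaki--Einstein metric, which is the desired conclusion.
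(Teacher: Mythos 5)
Your architecture (flow limit from Theorem \ref{T66}, partial $C^{0}$-estimate, Futaki invariant, transverse $K$-stability) contains the right ingredients, but the pivotal step is organized differently from the paper's and, as you state it, has a gap. You claim that monotonicity of the transverse Mabuchi $K$-energy along the Sasaki--Ricci flow forces the Donaldson--Futaki invariant of the induced degeneration to be non-positive. The flow trajectory $\omega(t)$ and the $\mathbf{C}^{\ast}$-orbit $\sigma^{T}(s)^{\ast}\omega$ are different paths in the space of transverse K\"ahler potentials, and the $K$-energy being non-increasing along the first says nothing a priori about its asymptotic slope along the second; bridging the two is precisely the content of the paper's theorem that transverse $K$-stability implies $K_{\eta_{0}}(\omega_{0},\omega_{t})\geq-C(g_{0})$ as in (\ref{54-b}), which is proved via the partial $C^{0}$-estimate (Theorem \ref{T65}), the comparison $\overline{\omega}_{i}\leq C\omega_{i}$ with the Bergman metrics, the $1$-cocycle identity, and Paul-type lower bounds for the $K$-energy restricted to the $SL^{T}(N_{m}+1,\mathbf{C})$-orbit. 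If you want the inequality $\operatorname{Re}f_{M_{\infty}}(V)\leq0$, the honest route is a direct computation on the central fiber: for the gradient soliton $\mathrm{Ric}^{T}+\bigtriangledown^{T}\overline{\bigtriangledown}^{T}f_{\infty}=g_{\infty}^{T}$ one has $R^{T}-n=-\Delta_{B}f_{\infty}$ and $u_{V}=f_{\infty}$ up to normalization, so $f_{M_{\infty}}(V)=-\int_{M}u_{V}(R^{T}-n)\,\omega^{n}\wedge\eta=-\int_{M}|\bigtriangledown^{T}f_{\infty}|^{2}\,\omega^{n}\wedge\eta$, which has a definite sign and vanishes iff $f_{\infty}$ is constant --- no appeal to monotonicity is needed.

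The paper's own order of operations is the reverse of yours and avoids your last step entirely: it first uses the $K$-energy lower bound together with (\ref{31}), (\ref{54}) and Corollary \ref{C52} to force $|u(t)|_{C^{0}}\rightarrow0$, so the limit is already Sasaki--Einstein (the soliton potential is killed analytically, not via the Futaki invariant); it then observes that an Einstein $M_{\infty}$ has vanishing generalized Sasaki--Futaki invariant, while Definition \ref{d61} demands $\operatorname{Re}f_{M_{\infty}}(V)>0$ whenever $M_{\infty}$ is not transverse biholomorphic to $M$, a contradiction yielding $M_{\infty}=M$. Your closing step (vanishing Futaki plus the soliton identity forces $f_{\infty}$ constant) is correct and standard, but note that you derive the vanishing of the Futaki invariant of $M$, whereas you need it on $M_{\infty}$ before you know $M_{\infty}\cong M$; the direct soliton computation above removes this circularity. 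Finally, your attribution of the dimension restriction to Donaldson--Sun tangent-cone control is not how the paper uses it: $\dim M\leq7$ enters through the leaf space being a well-formed orbifold with codimension-two singular set in the regularity theory of Section $4$ (Theorems \ref{T61}--\ref{T64}), which feeds into the partial $C^{0}$-estimate rather than into a separate tangent-cone argument.
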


\begin{remark}
\begin{enumerate}
\item Note that by continuity method, Collins and Sz\'{e}kelyhidi (\cite{cz2})
showed that a polarized affine variety admits a Ricci-flat K\"{a}hler cone
metric if and only if it is $K$-stable. In particular, the Sasakian manifold
admits a Sasaki-Einstein metric if and only if its K\"{a}hler cone is $K$-stable.

\item On the other hand, instead of $K$-stability on its K\"{a}hler cone, one
can have the so-called transverse $K$-stability on a compact quasi-regular
transverse Fano Sasakian manifold with the space of leaves of the
characteristic foliation which is well-formed and also a normal Fano
projective K\"{a}hler orbifold.

\item In the upcoming paper (\cite{clw2}), under the conic Sasaki-Ricci flow,
we will prove the conic version of Yau-Tian-Donaldson conjecture on a log
transverse Fano Sasakian manifold in which its leave space $Z_{0}$ is not
well-formed. It means that the orbifold structure $(Z_{0},\Delta)$ has the
codimension one fixed point set of some non-trivial isotropy subgroup. This is
served as a Sasaki analogue of the conic K\"{a}hler-Ricci flow as in
\cite{lz}, etc.
\end{enumerate}
\end{remark}

The proofs are given in this paper, primarily along the lines of the arguments
in \cite{tz}. In section $3,$ following Harnack inequality as in \cite{ben},
the transverse scalar curvature $R^{T}$ becomes positive in finite time under
the Sasaki-Ricci flow. Then, by applying Proposition \ref{P21-2}, we have
Theorem \ref{T21}. Furthermore, \ by results as \cite{wu} and \cite{pssw}, we
have the classification of gradient Shrinking Sasaki-Ricci solitons as in
Theorem \ref{T22}.

In section $4,$ the central issue is to show the $L^{4}$-bound of the
transverse Ricci curvature under the Sasaki-Ricci flow. Then, based on
Perelman's uniform noncollapsing condition and pseudolocality theorem of Ricci
flow and a regularity theory for Sasakian manifolds with integral bounded
transverse Ricci curvature, the limit solution $g_{\infty}^{T}$ is a smooth a
smooth Sasaki-Ricci soliton on $(M_{\infty})_{reg}$ which is a $S^{1}$-bundle
over the regular set $\mathcal{R}$ of $Z_{\infty}$ and the singular set
$\mathcal{S}$ of $Z_{\infty}$ is the codimension two orbifold singularities
(Theorem \ref{T64}). It is served as a Sasaki analogue of the regularity
theory of Cheeger-Colding (\cite{cc2}, \cite{cc3}) and Cheeger-Colding-Tian
(\cite{cct}) for manifolds with bounded Ricci curvature. Finally, as a
consequence of the first structure theorem for Sasakian manifolds and the
partial $C^{0}$-estimate (Theorem \ref{T65}), the Gromov--Hausdorff limit
$Z_{\infty}$ is a variety embedded in some $\mathbf{CP}^{N}$ and the singular
set $\mathcal{S}$ is a normal subvariety ( \cite[Theorem 1.6]{t2}). This will
implies Theorem \ref{T66}.

In section $5,$ let $(M,\xi,\eta_{0},g_{0})$ be a compact quasi-regular
transverse Fano Sasakian manifold of dimension up to seven with the space
$(Z=M/\mathcal{F}_{\xi},h_{0},\omega_{h_{0}})$ of leaves of the characteristic
foliation which is well-formed. In this special case, we can following the
notions as in \cite{t3} and \cite{t5} to define the Sasaki analogue of a
$K$-stable Fano K\"{a}hler manifold to be the so-called transverse $K$-stable
on a Sasakian manifold. Finally, by applying the partial $C^{0}$-estimate to
get a lower bound of the transverse Mabuchi $K$-energy. Furthermore,
$Z_{\infty}$ is a normal Fano projective K\"{a}hler orbifold, the
Sasaki-Futaki invariant can be extended to the generalized Sasaki-Futaki
invariant (\ref{59-1}). Then Theorem \ref{T67} follows easily by the
transverse $K$-stable condition.

\textbf{Acknowledgements.} Part of the project was done during the second
named author visiting to Department of Mathematics and Statistics, Georgetown
University from Oct.-Dec. 2021. He would like to express his gratitude for the
warm hospitality there.

\section{Preliminaries}

In this section, we will recall some preliminaries for Sasakian manifolds with
foliation singularities, a Type II deformation of the Sasakian structure and
the Sasaki-Ricci flow. We refer to \cite{bg}, \cite{fow}, \cite{sp},
\cite{clw}, and references therein for some details.

\subsection{Sasakian Structures and Foliation Singularities}

Let $(M,g,\nabla)$ be a Riemannian $(2n+1)$-manifold. $(M,g)$ is called Sasaki
if \ the cone $(C(M),J,\overline{\omega},\overline{g}):=(\mathbf{%
\mathbb{R}
}^{+}\times M\mathbf{,\ }dr^{2}+r^{2}g)$ is K\"{a}hler with $\overline{\omega
}=\frac{1}{2}i\partial \overline{\partial}r^{2}$ and
\[
\overline{\eta}=\frac{1}{2}\overline{g}(\xi,\cdot)\text{ \  \  \textrm{and}
\  \ }\overline{\xi}=J(r\frac{\partial}{\partial r}).
\]
The function $\frac{1}{2}r^{2}$ is hence a global K\"{a}hler potential for the
cone metric. As $\left[  r=1\right]  =\{1\} \times M\subset C(M)$, we may
define the Reeb vector field $\xi$ on $M$ by
\[
\xi=J(\frac{\partial}{\partial r}).
\]
and the contact $1$-form $\eta$ on $TM$
\[
\eta=g(\xi,\cdot)
\]
Then $\xi$ is the killing vector field with unit length such that $\eta
(\xi)=1\ $and$\  \ d\eta(\xi,X)=0.$ The tensor field of $type(1,1)$, defined
by
\[
\Phi(Y)=\nabla_{Y}\xi
\]
satisfies the condition%
\[
(\nabla_{X}\Phi)(Y)=g(\xi,Y)X-g(X,Y)\xi
\]
for any pair of vector fields $X$ and $Y$ on $M$. Then such a triple
$(\eta,\xi,\Phi)$ is called a Sasakian structure on a Sasakian manifold
$(M,g).$ Note that the Riemannian curvature satisfying the following
\[
R(X,\xi)Y=g(\xi,Y)X-g(X,Y)\xi
\]
for any pair of vector fields $X$ and $Y$ on $M$. In particular, the sectional
curvature of every section containing $\xi$ equals one.

\begin{definition}
(\cite{bg}) Let $(M,\eta,\xi,\Phi,g)$ be a compact Sasakian $(2n+1)$-manifold.
If the orbits of the Reeb vector field $\xi$ are all circles, then integrates
to give an isometric $\mathbf{S}^{1}$-action on $(M,g)$. It is nowhere zero
and the action is locally free. Furthermore, the isotropy group of every point
in $M$ is finite. If the action is free, then the Sasakian structure is said
to be regular. Otherwise, it is quasi-regular. If the orbits of are not all
closed, it is called irregular. In this case, the closure of the one parameter
subgroup of the isometry group of $(M,g)$ is isomorphic to a torus
$\mathbf{T}^{k}$. Then the irregular Sasakian manifold has at least an
$\mathbf{T}^{2}$-isometry.
\end{definition}

The first structure theorem on Sasakian manifolds states that

\begin{proposition}
\label{P21}(\cite{ru}, \cite{sp}, \cite{bg}) Let $(M,\eta,\xi,\Phi,g)$ be a
compact quasi-regular Sasakian manifold of dimension $2n+1$ and $Z$ denote the
space of leaves of the characteristic foliation $\mathcal{F}_{\xi}$ (just as
topological space). Then

\begin{enumerate}
\item $Z$ carries the structure of a Hodge orbifold $\mathcal{Z=}(Z,\Delta)$
with an orbifold K\"{a}hler metric $h$ and K\"{a}hler form $\omega$ which
defines an integral class in $H_{orb}^{2}(Z,\mathbf{Z)}$ in such a way that
$\pi:$ $(M,g,\omega)\rightarrow(Z,h,\omega_{h})$ is an orbifold Riemannian
submersion, and a principal $S^{1}$-orbibundle ($V$-bundle) over $Z.$
Furthermore,it satisfies $\frac{1}{2}d\eta=\pi^{\ast}(\omega_{h}).$The fibers
of $\pi$ are geodesics.

\item $Z$ is also a $Q$-factorial, polarized, normal projective algebraic variety.

\item The orbifold $Z$ is Fano if and only if $Ric_{g}>-2$: In this case $Z$
as a topological space is simply connected; and as an algebraic variety is
uniruled with Kodaira dimension $-\infty$.

\item $(M,\xi,g)$ is Sasaki-Einstein if and only if $(Z,h)$ is
K\"{a}hler-Einstein with scalar curvature $4n(n+1).$

\item If $(M,\eta,\xi,\Phi,g)$ is regular then the orbifold structure is
trivial and $\pi$ is a principal circle bundle over a smooth projective
algebraic variety.

\item As real cohomology classes, there is a relation between the first basic
Chern class and the first orbifold Chern class
\[
c_{1}^{B}(M):=c_{1}(\emph{F}_{\xi})=\pi^{\ast}c_{1}^{orb}(\mathbf{Z}).
\]

\end{enumerate}

Conversely, let $\pi$: $M\rightarrow Z$ be a $\mathbf{S}^{1}$-orbibundle over
a compact Hodge orbifold $(Z,h)$ whose first Chern class is an integral class
defined by $[\omega_{Z}]$, and $\eta$ be a $1$-form with $\frac{1}{2}d\eta
=\pi^{\ast}\omega_{Z}$. Then $(M,\pi^{\ast}h+\eta \otimes \eta)$ is a Sasakian
manifold if all the local uniformizing groups inject into the structure group
$U(1).$
\end{proposition}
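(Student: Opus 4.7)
The plan is to establish each assertion by transferring the transverse Kähler geometry of $M$ down through the characteristic foliation $\mathcal{F}_\xi$. First I would build the orbifold structure on $Z$. Since $M$ is quasi-regular, the Reeb flow integrates to a locally free $\mathbf{S}^{1}$-action, so the slice theorem for compact Lie group actions yields, at each $p\in M$, a transverse slice $S_p\subset M$ together with a finite isotropy group $\Gamma_p\subset \mathbf{S}^1$ acting on $S_p$; the quotient $S_p/\Gamma_p$ is an orbifold chart on $Z$ around $[p]$, and the transition maps are automatically orbifold morphisms because the $\mathbf{S}^1$-action is smooth and isometric. Points with nontrivial $\Gamma_p$ form the orbifold singular locus; these have real codimension at least two since $\Gamma_p$ acts faithfully on $S_p\cong \mathbb{C}^n$.

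Next I would produce the Hodge orbifold structure. The tensor $\Phi$ restricts to a transverse almost complex structure on $\ker\eta$, and the Sasaki axiom $(\nabla_X\Phi)Y = g(\xi,Y)X - g(X,Y)\xi$, together with $\mathcal{L}_\xi\Phi=0$ and $\mathcal{L}_\xi g = 0$, forces this transverse structure to be integrable and Kähler with basic Kähler form $\omega^T=\tfrac{1}{2}d\eta$. Since $\omega^T$ and $g^T := g - \eta\otimes\eta$ are $\xi$-invariant and horizontal, they descend on each orbifold chart to a Kähler metric $h$ and Kähler form $\omega_h$ satisfying $\pi^\ast\omega_h = \tfrac{1}{2}d\eta$, making $\pi:(M,g)\to(Z,h)$ an orbifold Riemannian submersion with geodesic fibers. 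Integrality of $[\omega_h]\in H^2_{\mathrm{orb}}(Z,\mathbb{Z})$ then comes from viewing $\eta$ as a connection $1$-form on the principal $\mathbf{S}^1$-orbibundle $M\to Z$ whose curvature is $2\omega_h$. The basic/orbifold Chern class identity $c_1^B(M)=\pi^\ast c_1^{\mathrm{orb}}(\mathcal{Z})$ is a direct Chern--Weil computation: the transverse Ricci form $\rho^T$ is basic and descends to the orbifold Ricci form of $(Z,h)$.

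For the algebro-geometric conclusions I would invoke the orbifold Kodaira--Baily embedding theorem: since $[\omega_h]$ is an integral orbifold class, a high tensor power of the associated orbifold line bundle provides an embedding of $\mathcal{Z}$ as a normal projective variety, whose $Q$-factoriality follows because every Weil divisor on a well-formed cyclic quotient is $\mathbb{Q}$-Cartier by local averaging. The Fano dichotomy reduces to a pointwise identity between the Sasakian and transverse Riccis: for horizontal $X,Y$ one has $\mathrm{Ric}_g(X,Y)=\mathrm{Ric}^T(X,Y)-2g(X,Y)$, so $\mathrm{Ric}_g>-2$ is equivalent to $\mathrm{Ric}^T>0$, which descends to $\mathrm{Ric}_h>0$, i.e.\ $Z$ Fano; simple connectedness and uniruledness then follow from standard results on Fano orbifolds. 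The Sasaki--Einstein $\Leftrightarrow$ Kähler--Einstein statement is the same identity applied to $\mathrm{Ric}_g=2n\,g$, which forces $\mathrm{Ric}_h=2(n+1)h$ and hence scalar curvature $4n(n+1)$. Regularity in item (5) is immediate since triviality of all isotropy groups collapses the orbifold structure to an ordinary manifold.

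For the converse, starting from a Hodge orbifold $(Z,h)$ with $[\omega_h]\in H^2_{\mathrm{orb}}(Z,\mathbb{Z})$, I would form the associated principal $\mathbf{S}^1$-orbibundle $\pi:M\to Z$ and pick a connection $\eta$ with $\tfrac{1}{2}d\eta=\pi^\ast\omega_h$; then $g:=\pi^\ast h+\eta\otimes\eta$ together with $\xi=\partial_\theta$ and $\Phi$ defined by pulling back the complex structure on the horizontal distribution yields a Sasakian structure on $M$, provided all local uniformizing groups inject into $U(1)$ so that the total space is a manifold rather than an orbifold. The main obstacle I anticipate is the bookkeeping of orbifold charts: one must check that the transverse Kähler tensors defined chart-by-chart glue to globally well-defined objects on $Z$ despite the nontrivial isotropy, and that the orbifold Kodaira--Baily argument applies uniformly across the codimension-two singular strata. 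The remaining items are essentially consequences of making these orbifold identifications careful.
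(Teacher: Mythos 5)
This proposition is quoted in the paper as a known structure theorem with citations to Rukimbira, Sparks and Boyer--Galicki, and the paper supplies no proof of its own; your sketch reproduces the standard argument from those references (slice theorem for the locally free $\mathbf{S}^1$-action, descent of the transverse K\"ahler data with $\tfrac12 d\eta=\pi^{\ast}\omega_h$, Baily's orbifold embedding theorem for projectivity, the identity $\mathrm{Ric}^T=\mathrm{Ric}_g+2g^T$ for the Fano and Einstein equivalences, and the orbifold Boothby--Wang construction for the converse). The outline is correct and is essentially the same approach as the cited sources; the only cosmetic quibbles are that $\mathbb{Q}$-factoriality holds for arbitrary finite quotient singularities without any well-formedness or cyclicity hypothesis, and that the simple connectedness and uniruledness claims in item (3) are themselves nontrivial theorems about Fano orbifolds that you are implicitly citing rather than proving.
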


On the other hand, the second structure theorem on Sasakian manifolds states that

\begin{proposition}
\label{P1} (\cite{ru}) Let $(M,g)$ be a compact Sasakian manifold of dimension
$2n+1$. Any Sasakian structure $(\xi,\eta,\Phi,g)$ on $M$ is either
quasi-regular or there is a sequence of quasi-regular Sasakian structures
$(M,\xi_{i},\eta_{i},\Phi_{i},g_{i})$ converging in the compact-open
$C^{\infty}$-topology to $(\xi,\eta,\Phi,g).$ In particular, if $M$ admits an
irregular Sasakian structure, it admits many locally free circle actions.
\end{proposition}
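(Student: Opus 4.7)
The plan is to reduce the irregular case to a $C^\infty$-limit of quasi-regular ones via a deformation of the Reeb vector field inside the torus of transverse isometries. First I would consider the closure $T$ in $\mathrm{Isom}(M,g)$ of the one-parameter subgroup generated by the Reeb field $\xi$. Because $M$ is compact and $\xi$ is Killing, $T$ is a compact connected abelian Lie subgroup, hence a torus $\mathbf{T}^k$. If $k=1$ then $\xi$ integrates to an $\mathbf{S}^1$-action and the structure is already quasi-regular, so assume $k\geq 2$, i.e.\ the structure is irregular. Then $\xi$ lies in the Lie algebra $\mathfrak{t}$ of $T$, and the \emph{rational} elements of $\mathfrak{t}$ (those whose flow is periodic) are dense; pick a sequence $\xi_i\in\mathfrak{t}$ of rational vectors with $\xi_i\to\xi$ in $\mathfrak{t}$.

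For $i$ large, continuity of $\eta$ and compactness of $M$ give $\eta(\xi_i)>0$ uniformly on $M$, so $\xi_i$ is a legitimate candidate Reeb vector field. The type II deformation recalled in Section~2 then produces the tuple
$$\eta_i:=\eta/\eta(\xi_i),\qquad \Phi_i:=\Phi-\Phi(\xi_i)\otimes\eta_i,\qquad g_i:=\eta_i\otimes\eta_i+\tfrac12\,d\eta_i(\,\cdot\,,\Phi_i\,\cdot\,),$$
for which $\eta_i(\xi_i)=1$ and $\Phi_i(\xi_i)=0$. Because the flow of each $\xi_i$ is periodic, $(M,\xi_i,\eta_i,\Phi_i,g_i)$ is quasi-regular by construction. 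Since every ingredient depends smoothly on $\xi_i$ and the convergence $\xi_i\to\xi$ takes place in a finite-dimensional Lie algebra of smooth Killing vector fields, $(\eta_i,\Phi_i,g_i)\to(\eta,\Phi,g)$ in the compact-open $C^\infty$-topology follows at once. The final sentence of the statement is then immediate: every rational $\xi_i\in\mathfrak{t}$ integrates to a locally free circle action on $M$, and the rational vectors are dense in the at-least-two-dimensional $\mathfrak{t}$.

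The main obstacle is verifying that the deformed tuple is genuinely a \emph{Sasakian} structure — not merely a contact-metric one — i.e.\ that the transverse integrability condition $(\nabla^{g_i}_X\Phi_i)Y=g_i(\xi_i,Y)X-g_i(X,Y)\xi_i$ survives the deformation. This reduces to checking that the transverse holomorphic structure that $\Phi_i$ induces on $\ker\eta_i$ agrees with the restriction of the transverse holomorphic structure of $\mathcal{F}_\xi$, which in turn follows from the fact that every element of $\mathfrak{t}$ commutes with $\xi$ and preserves $\Phi$, $\eta$ and $d\eta$. This $T$-invariance of the transverse K\"ahler data is the technical heart of Rukimbira's argument in \cite{ru}.
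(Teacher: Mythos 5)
The paper offers no proof of this proposition --- it is quoted from Rukimbira \cite{ru} (see also \cite{bg}) --- so there is nothing internal to compare against; your argument is correct and is exactly the standard proof behind that citation: close the Reeb flow up to a torus $T\subset\mathrm{Isom}(M,g)$, approximate $\xi$ by rational elements of the cone $\{\xi'\in\mathfrak{t}:\eta(\xi')>0\}$, and deform by $\eta'=\eta/\eta(\xi')$, $\Phi'=\Phi-\Phi\xi'\otimes\eta'$, with the Sasakian-ness of the deformed tuple resting, as you note, on the $T$-invariance of $(\eta,\Phi,g)$ obtained by closing up $\mathcal{L}_\xi\eta=\mathcal{L}_\xi\Phi=\mathcal{L}_\xi g=0$. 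One small mislabel: this Reeb-field deformation is \emph{not} the type II deformation recalled in Section 2 (which fixes $\xi$ and varies $\eta$ by a basic potential); it is the deformation within the Sasaki cone, a ``type I'' deformation in the terminology of \cite{bg}.
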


We recall that

\begin{definition}
(\cite{bg}) An orbifold complex manifold is a normal, compact, complex space
$Z$ locally given by charts written as quotients of smooth coordinate charts.
That is, $Z$ can be covered by open charts $Z=\cup U_{i}.$ The orbifold charts
on $(Z,U_{i},\varphi_{i})$ is defined by the local uniformizing systems
$(\widetilde{U_{i}},G_{i},\varphi_{i})$ centered at the point $p_{i}$, where
$G_{_{i}}$ is the local uniformizing finite group acting on a smooth complex
space $\widetilde{U_{i}}$ such that $\varphi_{i}:\widetilde{U_{i}}\rightarrow
U_{i}=\widetilde{U_{i}}/G_{_{i}}$ is the biholomorphic map. A point $x$ of
complex orbifold $X$ whose isotropy subgroup $\Gamma_{x}\neq Id$ is called a
singular point. Those points with $\Gamma_{x}=Id$ are called regular points.
The set of singular points is called the orbifold singular locus or orbifold
singular set, and is denoted by $\Sigma^{orb}(Z)$.
\end{definition}

Now let $(M,\eta,\xi,\Phi,g)$ be a compact quasi-regular Sasakian manifold of
dimension $2n+1$. By the first structure theorem, the underlying complex space
$\emph{Z}=(Z\emph{,U}_{i}\emph{)}$ is a normal, orbifold variety with the
algebro-geometric singular set $\Sigma(Z)$. Then $\Sigma(Z)\subset \Sigma
^{orb}(Z)$ and it follows that $\Sigma(Z)=\Sigma^{orb}(Z) $ if and only if
none of the local uniformizing groups of the orbifold $(Z\emph{,U}_{i}%
\emph{)}$ contain a reflection. If some $G_{i}$ contains a reflection, then,
on $(Z\emph{,U}_{i}\emph{),}$ the reflection fixes a hyperplane giving rise to
a ramification divisor on $\widetilde{\emph{U}_{i}} $ and a branch divisor
$\Delta$. More precisely, the branch divisor $\Delta$ of an orbifold
$\mathbf{Z}=(Z\emph{,}\Delta)$ is a $Q$-divisor on $Z$ of the form
\[
\Delta=\sum_{\alpha}(1-\frac{1}{m_{\alpha}})D_{\alpha},
\]
where the sum is taken over all Weil divisors $D_{\alpha}$ that lie in
$\Sigma^{orb}(Z)$, and $m_{\alpha}$ is the $gcd$ of the orders of the local
uniformizing groups taken over all points of $D_{\alpha}$ and is called the
ramification index of $D_{\alpha}.$

The orbifold structure $\mathbf{Z}=(Z\emph{,}\Delta)$ is called well-formed if
the fixed point set of every non-trivial isotropy subgroup has codimension at
least two. Then $Z$ is well-formed if and only if its orbifold singular locus
and algebro-geometric singular locus coincide, equivalently $Z$ has no branch divisors.

\begin{example}
For an instance, the weighted projective $\mathbf{CP}(1;4;6)$ has a branch
divisor $\frac{1}{2}D_{0}=\{z_{0}=0\}.$ But $\mathbf{CP}(1;2;3)$ is a
unramified well-formed\ orbifold with two singular points, $(0;1;0)$ with
local uniformizing group the cyclic group $\mathbf{Z}_{2}$, and $(0;0;1)$ with
local uniformizing group $\mathbf{Z}_{3}$. But both are the same varieties.
\end{example}

Note that the orbifold canonical divisor $K_{\emph{Z}}^{orb}$ and canonical
divisor $K_{Z}$ are related by
\begin{equation}
K_{\emph{Z}}^{orb}=\varphi^{\ast}(K_{Z}+[\Delta]). \label{orbifold}%
\end{equation}
In particular, $K_{\emph{Z}}^{orb}=\varphi^{\ast}K_{Z}$ if and only if there
are no branch divisors.

For all previous discussions with the special case for $n=2$, we have the
following result concerning its foliation cyclic quotient
singularities\textbf{:}

\begin{theorem}
\label{T21-2} (\cite{clw}) Let $(M,\eta,\xi,\Phi,g)$ be a compact
quasi-regular Sasakian $5$-manifold and its leave space $Z$ of the
characteristic foliation be well-formed\textbf{.} Then $Z$ is a $Q$-factorial
normal projective algebraic orbifold surface with isolated singularities of a
finite cyclic quotient of $\mathbf{C}^{2}$. Accordingly, $p\in Z$ is
analytically isomorphic to $p\in Z\simeq(0\in \mathbf{C}^{2})/\mu_{Z_{r}}%
,$where $Z_{r}$ is a cyclic group of order $r$ and its action on such open
affine neighborhood is defined by
\[
\mu_{Z_{r}}:(z_{1},z_{2})\rightarrow(\zeta^{a}z_{1},\zeta^{b}z_{2}),
\]
where $\zeta$ is a primitive $r$-th root of unity. We denote the cyclic
quotient singularity by $\frac{1}{r}(a,b)$ with $(a,r)=1=(b,r)$. In
particular, the action can be rescaled so that every cyclic quotient
singularity corresponds to a $\frac{1}{r}(1,a)$-point with $(r,a)=1$. It is
klt (Kawamata log terminal) singularities.
\end{theorem}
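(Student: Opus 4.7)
\medskip

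\noindent \textbf{Proof proposal.} The plan is to combine the first structure theorem (Proposition \ref{P21}) with the rigidity of isotropy groups of an $\mathbf{S}^1$-action and the well-formedness hypothesis to linearize and diagonalize the local model at each singular point. First I would invoke Proposition \ref{P21} to conclude that $Z=M/\mathcal{F}_\xi$ already carries the structure of a $Q$-factorial polarized normal projective variety, together with an orbifold atlas $\{(\widetilde{U_i},G_i,\varphi_i)\}$ in which each $G_i$ is the isotropy subgroup of the Reeb $\mathbf{S}^1$-action on the fiber over $p_i$. Since the circle isotropy at any point of a locally free $\mathbf{S}^1$-action is a finite subgroup of $U(1)$, every $G_i$ is automatically cyclic, say $G_i\cong \mathbf{Z}_{r_i}$. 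This is the conceptual heart of the statement: the cyclic nature of the singularities is forced by the Sasakian structure, not by dimension considerations alone.

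\smallskip

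Next I would use the well-formed hypothesis. By the discussion preceding the theorem, well-formedness means that no $G_i$ contains a pseudo-reflection, equivalently, the fixed locus of every nontrivial element of $G_i$ has codimension $\geq 2$ in $\widetilde{U_i}\subset \mathbf{C}^2$. Because $\dim_{\mathbf{C}} Z=2$, the singular locus $\Sigma(Z)=\Sigma^{orb}(Z)$ is therefore $0$-dimensional, so the orbifold singularities are isolated points. At each such point I would linearize the $G_i$-action via Cartan's lemma for finite holomorphic actions with a fixed point: after shrinking $\widetilde{U_i}$ and choosing suitable coordinates, $G_i$ acts linearly on $\mathbf{C}^2$. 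Since $G_i=\mathbf{Z}_{r_i}$ is abelian, its representation splits as a sum of two characters, so a generator $\zeta$ acts by
\[
\mu_{\mathbf{Z}_{r_i}}:(z_1,z_2)\longmapsto(\zeta^a z_1,\zeta^b z_2).
\]
The absence of pseudo-reflections translates to the requirement that no nontrivial power of $\zeta$ fixes a coordinate axis, i.e., $(a,r_i)=(b,r_i)=1$, which identifies $p_i\in Z$ with the cyclic quotient singularity $\frac{1}{r_i}(a,b)$.

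\smallskip

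To reach the normal form $\frac{1}{r_i}(1,a)$, I would rescale the generator: replacing $\zeta$ by $\zeta^{a^{-1}}$ (mod $r_i$), which is again a primitive $r_i$-th root of unity since $(a,r_i)=1$, converts the weights $(a,b)$ into $(1,b\cdot a^{-1})$. Finally, for the klt claim I would appeal to the classical fact that every finite quotient singularity is klt (one pulls back a log resolution to the smooth cover $\widetilde{U_i}$, where the log discrepancies are automatically $>-1$ since the canonical divisor pulls back with trivial boundary). The main obstacle I anticipate is step one, namely cleanly identifying the orbifold uniformizing group $G_i$ with the Reeb isotropy group so that cyclicity is guaranteed; this requires unpacking the $\mathbf{S}^1$-orbibundle description in Proposition \ref{P21} and checking that the slice to the $\mathbf{S}^1$-orbit at a singular leaf gives precisely $(\widetilde{U_i},G_i)$. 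Once this identification is made, the remaining steps (linearization, diagonalization, rescaling, klt) are standard and formal.
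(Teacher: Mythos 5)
Your proposal is correct and follows essentially the route the paper itself indicates: the theorem is quoted from \cite{clw} without a written proof, but the surrounding discussion (Proposition \ref{P21} giving the $S^{1}$-orbibundle structure with local uniformizing groups injecting into $U(1)$, hence cyclic, together with the definition of well-formedness as the absence of reflections, forcing codimension-two fixed loci and thus isolated singular points on a surface) is exactly the argument you flesh out with the standard linearization, diagonalization, weight normalization $\frac{1}{r}(a,b)\rightsquigarrow\frac{1}{r}(1,a')$, and the classical fact that quotient singularities are klt. The one step you rightly flag as needing care --- identifying the orbifold uniformizing group with the Reeb isotropy group via the slice to a singular orbit --- is precisely the content of the first structure theorem that the paper takes as given, so no gap remains.
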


\begin{definition}
\label{D21}

\begin{enumerate}
\item Let $(M,\eta,\xi,\Phi,g)$ be a compact quasi-regular Sasakian
$5$-manifold and its leave space $(Z,\emptyset)$ of the characteristic
foliation be well-formed. Then\textbf{\ }the corresponding singularities in
$(M,\eta,\xi,\Phi,g)$\textbf{\ }is called\textbf{\ }foliation cyclic quotient
singularities of type\textbf{\ }%
\[
\frac{1}{r}(1,a)
\]
at a singular fibre $\mathbf{S}^{1}$ in $M$. The foliation singular set is
discrete, and hence finite.

\item Let $(M,\eta,\xi,\Phi,g)$ be a compact quasi-regular Sasakian
$5$-manifold and its leave space $(Z,\Delta)$ has the codimension one fixed
point set of some non-trivial isotropy subgroup. In this case, the action
\[
\mu_{Z_{r}}:(z_{1},z_{2})\rightarrow(e^{\frac{2\pi a_{1}i}{r_{1}}}%
z_{1},e^{\frac{2\pi a_{2}i}{r_{2}}}z_{2}),
\]
for some positive integers $r_{1,}r_{2}$ whose least common multiple is $r$,
and $a_{i},i=1,2$ are integers coprime to $r_{i},i=1,2$. Then the foliation
singular set contains some $3$-dimensional Sasakian submanifolds of $M.$ More
precisely, the corresponding singularities in $(M,\eta,\xi,\Phi,g)$%
\textbf{\ }is called\textbf{\ }the Hopf\textbf{\ }$\mathbf{S}^{1}$%
\textbf{-}orbibundle over a Riemann surface\textbf{\ }$\Sigma_{h}.$
\end{enumerate}
\end{definition}

\subsection{The Foliated Normal Coordinate}

Let $(M,\eta,\xi,\Phi,g)$ be a compact Sasakian $(2n+1)$-manifold with
$g(\xi,\xi)=1$ and the integral curves of $\xi$ are geodesics. For any point
$p\in M$, we can construct local coordinates in a neighborhood of $p$ which
are simultaneously foliated and Riemann normal coordinates (\cite{gkn}). That
is, we can find Riemann normal coordinates $\{x,z^{1},z^{2},\cdot \cdot
\cdot,z^{n}\}$ on a neighborhood $U$ of $p$, such that $\frac{\partial
}{\partial x}=\xi$ on $U$. Let $\{U_{\alpha}\}_{\alpha \in A}$ be an open
covering of the Sasakian manifold and $\pi_{\alpha}:U_{\alpha}\rightarrow
V_{\alpha}\subset%
\mathbb{C}
^{n\text{ }}$ be submersions such that $\pi_{\alpha}\circ \pi_{\beta}^{-1}%
:\pi_{\beta}(U_{\alpha}\cap U_{\beta})\rightarrow \pi_{\alpha}(U_{\alpha}\cap
U_{\beta})$ is biholomorphic. On each $V_{\alpha},$ there is a canonical
isomorphism $d\pi_{\alpha}:D_{p}\rightarrow T_{\pi_{\alpha}(p)}V_{\alpha}$for
any $p\in U_{\alpha},$ where $D=\ker \xi \subset TM.$ Since $\xi$ generates
isometries, the restriction of the Sasakian metric $g$ to $D$ gives a
well-defined Hermitian metric $g_{\alpha}^{T}$ on $V_{\alpha}.$ This Hermitian
metric in fact is K\"{a}hler. More precisely, let $z^{1},z^{2},\cdot \cdot
\cdot,z^{n}$ be the local holomorphic coordinates on $V_{\alpha}$. We pull
back these to $U_{\alpha}$ and still write the same. Let $x$ be the coordinate
along the leaves with $\xi=\frac{\partial}{\partial x}.$ Then we have the
foliation local coordinate $\{x,z^{1},z^{2},\cdot \cdot \cdot,z^{n}\}$ on
$U_{\alpha}\ $and $(D\otimes%
\mathbb{C}
)$ is spanned by the fields $Z_{j}=\left(  \frac{\partial}{\partial z^{j}%
}+ih_{j}\frac{\partial}{\partial x}\right)  ,\  \  \ j\in \left \{
1,2,...,n\right \}  $ with
\[
\eta=dx-ih_{j}dz^{j}+ih_{\overline{j}}d\overline{z}^{j}%
\]
and its dual frame
\[
\{ \eta,dz^{j},\ j=1,2,\cdot \cdot \cdot,n\}.
\]
Here $h$ is a basic function such that $\frac{\partial h}{\partial x}=0$ and
$h_{j}=\frac{\partial h}{\partial z^{j}},h_{j\overline{l}}=\frac{\partial
^{2}h}{\partial z^{j}\partial \overline{z}^{l}}$ with the foliation normal
coordinate%
\begin{equation}
h_{j}(p)=0,h_{j\overline{l}}(p)=\delta_{j}^{l},dh_{j\overline{l}}(p)=0.
\label{AAA3}%
\end{equation}
Moreover, we have
\[
d\eta(Z_{\alpha},\overline{Z_{\beta}})=d\eta(\frac{\partial}{\partial
z^{\alpha}},\frac{\partial}{\overline{\partial}z^{\beta}}).
\]
Then the K\"{a}hler $2$-form $\omega_{\alpha}^{T}$ of the Hermitian metric
$g_{\alpha}^{T}$ on $V_{\alpha},$ which is the same as the restriction of the
Levi form $d\eta$ to $\widetilde{D_{\alpha}^{n}}$, the slice $\{x=$
\textrm{constant}$\}$ in $U_{\alpha},$ is closed. The collection of K\"{a}hler
metrics $\{g_{\alpha}^{T}\}$ on $\{V_{\alpha}\}$ is so-called a transverse
K\"{a}hler metric. We often refer to $d\eta$ as the K\"{a}hler form of the
transverse K\"{a}hler metric $g^{T}$ in the leaf space $\widetilde{D^{n}}.$

The K\"{a}hler form $d\eta$ on $D$ and the K\"{a}hler metric $g^{T}$ is define
such that $g=g^{T}+\eta \otimes \eta.$ Now in terms of the normal coordinate, we
have%
\[
g^{T}=g_{i\overline{j}}^{T}dz^{i}d\overline{z}^{j}.
\]
Here $g_{i\overline{j}}^{T}=g^{T}(\frac{\partial}{\partial z^{i}}%
,\frac{\partial}{\partial \overline{z}^{j}}).$ The transverse Ricci curvature
$Ric^{T}$ of the Levi-Civita connection $\nabla^{T}$ associated to $g^{T}$ is
defined by $Ric^{T}=Ric+2g^{T}$ and then $R^{T}=R+2n.$ The transverse Ricci
form is defined to be $\rho^{T}=Ric^{T}(\Phi \cdot,\cdot)=-iR_{i\overline{j}%
}^{T}dz^{i}\wedge d\overline{z}^{j}$ with
\[
R_{i\overline{j}}^{T}=-\frac{\partial^{2}}{\partial z^{i}\partial \overline
{z}^{j}}\log \det(g_{\alpha \overline{\beta}}^{T})
\]
and it is a closed basic $(1,1)$-form $\rho^{T}=\rho+2d\eta.$

\subsection{The Sasaki-Ricci Flow and Type II Deformations of Sasakian
Structures}

We recall that a $p$-form $\gamma$ on a Sasakian $(2n+1)$-manifold is called
basic if
\[
i(\xi)\gamma=0\text{ \  \  \textrm{and} \  \ }\mathcal{L}_{\xi}\gamma=0.
\]

Let $\Lambda_{B}^{p}$ be the sheaf of germs of basic $p$-forms and
\ $\Omega_{B}^{p}$ be the set of all global sections of $\Lambda_{B}^{p}$. It
is easy to check that $d\gamma$ is basic if $\gamma$ is basic. Set
$d_{B}=d|_{\Omega_{B}^{p}}.$ Then%
\[
d_{B}:=\partial_{B}+\overline{\partial}_{B}:\Omega_{B}^{p}\rightarrow
\Omega_{B}^{p+1}.
\]
with\ $\partial_{B}:\Lambda_{B}^{p,q}\rightarrow \Lambda_{B}^{p+1,q}$ and
$\overline{\partial}_{B}:\Lambda_{B}^{p,q}\rightarrow \Lambda_{B}^{p,q+1}. $
Moreover%
\[
d_{B}d_{B}^{c}=i\partial_{B}\overline{\partial}_{B}\text{ \  \  \textrm{and}
\  \ }d_{B}^{2}=(d_{B}^{c})^{2}=0
\]
for $d_{B}^{c}:=\frac{i}{2}(\overline{\partial}_{B}-\partial_{B}).$ The basic
Laplacian is defined by
\[
\Delta_{B}:=d_{B}d_{B}^{\ast}+d_{B}^{\ast}d_{B}.
\]
Then we have the basic de Rham complex $(\Omega_{B}^{\ast},d_{B})$ and the
basic Dolbeault complex $(\Omega_{B}^{p,\ast},\overline{\partial}_{B})$ and
its cohomology ring $H_{B}^{\ast}(\mathcal{F}_{\xi})\triangleq H_{B}^{\ast
}(M,\mathbf{R})$ of the foliation $\mathcal{F}_{\xi}$ (\cite{eka}]). Then we
can define the orbifold cohomology of the leaf space $Z=M/U(1)$ to be this
basic cohomology ring
\[
H_{orb}^{\ast}(Z,\mathbf{R})\triangleq H_{B}^{\ast}(F_{\xi})
\]
and the basic first Chern class $c_{1}^{B}(M)$ by $c_{1}^{B}=[\frac{\rho^{T}%
}{2\pi}]_{B}$. And a transverse K\"{a}hler-Einstein metric(or a Sasaki $\eta
$-Einstein metric) means that it satisfies $[\rho^{T}]_{B}=\varkappa \lbrack
d\eta]_{B}$ for $\varkappa=-1,0,1$, up to a $D$-homothetic deformation.

\begin{example}
Let $(M,\eta,\xi,\Phi,g)$ be a compact Sasakian $(2n+1)$-manifold. If $g^{T}$
is a transverse K\"{a}hler metric on $M,$ then $h_{\alpha}=\det \left(
((g_{i\overline{j}}^{\alpha})^{T})^{-1}\right)  $ on $U_{\alpha}$ defines a
basic Hermitian metric on the transverse canonical bundle $K_{M}^{T}$. The
inverse $(K_{M}^{T})^{-1}$ of $K_{M}^{T}$ is sometimes called the transverse
anti-canonical bundle. Its basic first Chern class $c_{1}^{B}((K_{M}^{T}%
)^{-1})$ is called the basic first Chern class of $M$ and often denoted by
$c_{1}^{B}(M).$Then it follows from the previous result that $c_{1}^{B}(M)$
that $c_{1}^{B}(M)=[\frac{\rho_{\omega}^{T}}{2\pi}]_{B}$ for any transverse
K\"{a}hler metric $\omega$ on a Sasakian manifold $M$.
\end{example}

\begin{definition}
Let $(L,h)$ be a basic transverse holomorphic line bundle over a Sasakian
manifold $(M,\eta,\xi,\Phi,g)$ with the basic Hermitian metric $h$. We say
that $L$ is\textbf{\ }very ample if for any ordered basis $\underline
{s}=(s_{0},...,s_{N})$ of $H_{B}^{0}(M,L)$, the map $i_{\underline{s}%
}:M\rightarrow \mathbf{CP}^{N}$ given by%
\[
i_{\underline{s}}(x)=[s_{0}(x),...,s_{N}(x)]
\]
is well-defined and an embedding which is $S^{1}$-equivariant with respect to
the weighted $\mathbf{C}^{\ast}$action in $\mathbf{C}^{N+1}$ as long as not
all the $s_{i}(x)$ vanish. We say that $L$ is ample if there exists a positive
integer $m_{0}$ such that $L^{m}$ is very ample for all $m\geq m_{0}.$
\end{definition}

There is a Sasakian analogue of {Kodaira embedding theorem on a compact
quasi-regular Sasakian $(2n+1)$-manifold {due to }\cite{rt}, \cite{hlm} : }

\begin{proposition}
\label{PCR}Let $(M,\eta,\xi,\Phi,g)$ be a compact quasi-regular Sasakian
$(2n+1)$-manifold and $(L,h)$ be a basic transverse holomorphic line bundle
over $M$ with the basic Hermitian metric $h.$ Then $L$ is ample if and only if
$L$ is positive.
\end{proposition}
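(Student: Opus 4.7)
The plan is to reduce Proposition \ref{PCR} to the orbifold Kodaira embedding theorem on the leaf space, using the first structure theorem (Proposition \ref{P21}) to transfer the problem from $M$ to $\mathcal{Z}$. The easy direction (ample implies positive) proceeds as in the K\"ahler case: if $L^{m}$ gives an $S^{1}$-equivariant embedding $i_{\underline{s}}:M\rightarrow \mathbf{CP}^{N}$, then the pullback $i_{\underline{s}}^{\ast}\omega_{FS}$ of the Fubini-Study form is a basic, transversally positive $(1,1)$-form representing (up to a positive multiple) the first basic Chern class of $L^{m}$. Taking $m$-th roots gives a basic Hermitian metric on $L$ whose curvature is transversally positive, so $L$ is positive in the transverse sense.

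For the nontrivial direction, I would first descend $L$ to the leaf space. Since $M$ is compact quasi-regular, $\pi:M\rightarrow \mathcal{Z}=(Z,\Delta)$ is a principal $S^{1}$-orbibundle over the Hodge orbifold $\mathcal{Z}$, which by Proposition \ref{P21} is simultaneously a $Q$-factorial, polarized, normal projective variety. A basic transverse holomorphic line bundle $(L,h)$ corresponds to an orbifold holomorphic line bundle $L_{\mathcal{Z}}$ on $\mathcal{Z}$ with an orbifold Hermitian metric $h_{\mathcal{Z}}$; after identifying equivariant data under the Reeb $S^{1}$-action one has $L=\pi^{\ast}L_{\mathcal{Z}}$, $h=\pi^{\ast}h_{\mathcal{Z}}$, and the Chern curvatures satisfy $\Theta_{h}=\pi^{\ast}\Theta_{h_{\mathcal{Z}}}$ as basic $(1,1)$-forms. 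Under this dictionary $H_{B}^{0}(M,L^{m})\cong H_{orb}^{0}(\mathcal{Z},L_{\mathcal{Z}}^{m})$, and $L$ is transversally positive iff $L_{\mathcal{Z}}$ is positive as an orbifold line bundle. Then I would invoke Baily's orbifold Kodaira embedding theorem on the compact Hodge orbifold $\mathcal{Z}$: positivity of $L_{\mathcal{Z}}$ implies that for all sufficiently large $m$, a basis of $H_{orb}^{0}(\mathcal{Z},L_{\mathcal{Z}}^{m})$ gives an orbifold embedding $\iota:\mathcal{Z}\hookrightarrow \mathbf{CP}^{N}$. The delicate content of this step is the separation of points and tangent vectors at orbifold singular points, handled via Kodaira-Nakano vanishing in the local uniformizing charts $(\widetilde{U_{i}},G_{i},\varphi_{i})$ together with $G_{i}$-equivariance.

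Finally, I would lift the orbifold embedding to the Sasakian embedding $i_{\underline{s}}:M\rightarrow \mathbf{CP}^{N}$. Taking $\underline{s}=(s_{0},\ldots,s_{N})$ to be a basis of $H_{B}^{0}(M,L^{m})$ obtained by pullback from an embedding basis of $L_{\mathcal{Z}}^{m}$, the Reeb flow rescales each $s_{i}$ by a uniform $S^{1}$-weight, so $[s_{0}(x):\cdots:s_{N}(x)]$ is $S^{1}$-invariant and the induced map factors as $i_{\underline{s}}=\iota \circ \pi$ onto $\iota(\mathcal{Z})$. The $S^{1}$-fibers of $M$ are recovered via the weighted $\mathbf{C}^{\ast}$-action on $\mathbf{C}^{N+1}$: $M$ is identified with the unit link inside the affine cone $C(\iota(\mathcal{Z}))\subset \mathbf{C}^{N+1}$, and the required $S^{1}$-equivariance of $i_{\underline{s}}$ is then built in by construction. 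The main obstacle I expect is precisely this last step: one must choose $m$ divisible by the orders of all local isotropy groups along each orbifold stratum of $\mathcal{Z}$ so that $i_{\underline{s}}$ is injective on $S^{1}$-fibers and an immersion across orbifold strata, matching the $S^{1}$-weights of the chosen sections uniformly across $\mathcal{Z}$. This weight-matching is the essential Sasakian content beyond the K\"ahler case, and it is carried out in \cite{rt} and \cite{hlm}.
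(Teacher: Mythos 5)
The paper does not actually prove Proposition \ref{PCR}: it is stated as a quoted result, with the proof deferred entirely to the references \cite{rt} and \cite{hlm} (Ross--Thomas's orbifold embedding theory and Hsiao--Li--Marinescu's equivariant CR Kodaira embedding). So there is no in-paper argument to compare against, and your strategy --- descend $L$ to an orbifold line bundle on the Hodge orbifold $\mathcal{Z}$ via the first structure theorem, apply Baily's orbifold Kodaira embedding, and lift back --- is the natural one and is essentially the Ross--Thomas route. The easy direction is fine in spirit, though note that the naive pullback $i_{\underline{s}}^{\ast}\omega_{FS}$ is Reeb-invariant but not basic (the Reeb field maps to the generator of the weighted circle action, which is not in the kernel of $\omega_{FS}$); the correct statement is that the induced Fubini--Study-type basic Hermitian metric $h^{m}/\sum_{i}\|s_{i}\|_{h^{m}}^{2}$ on $L^{m}$ has transversally positive curvature.

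The genuine gap is in your lifting step, and it is not merely the "weight-matching" technicality you flag at the end. If you take $\underline{s}$ to be the pullback of an embedding basis of $H_{orb}^{0}(\mathcal{Z},L_{\mathcal{Z}}^{m})$, then, as you yourself observe, all the $s_{i}$ carry one and the same Reeb weight, the projectivized map is Reeb-invariant, and $i_{\underline{s}}$ factors through $\pi$. Such a map collapses every $S^{1}$-fiber and therefore cannot be the embedding of the $(2n+1)$-dimensional manifold $M$ into $\mathbf{CP}^{N}$ that the paper's definition of very ample demands. Your attempted repair --- recovering $M$ as the unit link of the affine cone $C(\iota(\mathcal{Z}))\subset\mathbf{C}^{N+1}$ --- produces a different map (into $\mathbf{C}^{N+1}$, after choosing trivializing data), not the map $x\mapsto[s_{0}(x):\cdots:s_{N}(x)]$ whose injectivity is being asserted. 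What is actually needed is to use basic transverse holomorphic sections lying in \emph{several distinct} Fourier weight spaces of the $S^{1}$-action (equivalently, holomorphic functions of different charges on the K\"ahler cone): then the weighted $\mathbf{C}^{\ast}$-action on $\mathbf{CP}^{N}$ is nontrivial on the image, and one must check that the relative weights separate points along each Reeb orbit (a coprimality condition involving the orders of the isotropy groups), as well as separation of nearby orbits and of tangent vectors. This multi-weight analysis is the real content of the cited theorems of \cite{rt} and \cite{hlm}, and your outline as written does not supply it.
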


Now we consider the {Type II deformations of Sasakian structures }$(M,\eta
,\xi,\Phi,g)$ as followings :

By fixing the $\xi$ and varying $\eta$, define
\[
\widetilde{\eta}=\eta+d_{B}^{c}\varphi,
\]
for $\varphi \in \Omega_{B}^{0}$. Then
\[
d\widetilde{\eta}=d\eta+i\partial_{B}\overline{\partial}_{B}\varphi \text{
\  \ and \  \ }\widetilde{\omega}=\omega+i\partial_{B}\overline{\partial}%
_{B}\varphi.
\]
Hence we have the same transversal holomorphic foliation but with the new
K\"{a}hler structure on the K\"{a}hler cone $C(M)$ and new contact bundle
$\widetilde{D}$ with%
\[
\widetilde{\omega}=\frac{1}{2}dd^{c}\widetilde{r}^{2},\widetilde
{r}=re^{\varphi}.
\]
Since $r\frac{\partial}{\partial r}=\widetilde{r}\frac{\partial}%
{\partial \widetilde{r}}$ and $\xi+ir\frac{\partial}{\partial r}=\xi-iJ(\xi)$
is a holomorphic vector field on $C(M),$ so we have the same holomorphic
structure. Finally, by the $\partial_{B}\overline{\partial}_{B}$-Lemma in the
basic Hodge decomposition, there is a basic function $F:M\rightarrow%
\mathbb{R}
$ such that
\[
\rho^{T}(x,t)-\varkappa d\eta(x,t)=d_{B}d_{B}^{c}F=i\partial_{B}%
\overline{\partial}_{B}F.
\]

Now we focus on finding a new $\eta$-Einstein Sasakian structure
$(M,\xi,\widetilde{\eta},\widetilde{\Phi},\widetilde{g})$ with $\widetilde
{g}^{T}=(g_{i\overline{j}}^{T}+\varphi_{i\overline{j}})dz^{i}d\overline{z}%
^{j}$ such that
\[
\widetilde{\rho}^{T}=\varkappa d\widetilde{\eta}.
\]
Hence $\widetilde{\rho}^{T}-\rho^{T}=\kappa d_{B}d_{B}^{c}\varphi-d_{B}%
d_{B}^{c}F$. It follows that there is a Sasakian analogue of the
Monge-Amp\`{e}re equation for the orbifold version of Calabi-Yau Theorem
\begin{equation}
\frac{\det(g_{\alpha \overline{\beta}}^{T}+\varphi_{\alpha \overline{\beta}}%
)}{\det(g_{\alpha \overline{\beta}}^{T})}=e^{-\kappa \varphi+F}. \label{B}%
\end{equation}

Now we consider the Sasaki-Ricci flow on $M\times \lbrack0,T)$%
\[
\frac{d}{dt}g^{T}(x,t)=-(Ric^{T}(x,t)-\varkappa g^{T}(x,t))
\]
which is equivalent to%
\begin{equation}
\frac{d}{dt}\varphi=\log \det(g_{\alpha \overline{\beta}}^{T}+\varphi
_{\alpha \overline{\beta}})-\log \det(g_{\alpha \overline{\beta}}^{T}%
)+\kappa \varphi-F. \label{C}%
\end{equation}
Note that,for any two Sasakian structures with the fixed Reeb vector field
$\xi,$ we have $Vol(M,g)=Vol(M,g^{\prime})$ and
\[
\widetilde{\omega}^{n}\wedge \eta=i^{n}\det(g_{\alpha \overline{\beta}}%
^{T}+\varphi_{\alpha \overline{\beta}})dz^{1}\wedge d\overline{z}^{1}%
\wedge...\wedge dz^{n}\wedge d\overline{z}^{n}\wedge dx.
\]

As before, for an orbifold Riemannian submersion $\pi:(M,g,\omega
)\rightarrow(Z,h,\omega_{h})$ with $\omega=\pi^{\ast}(\omega_{h}).$ We
consider the projection
\begin{equation}
\Pi:(C(M),\overline{g},J,\overline{\omega})\rightarrow(Z,h,\omega_{h})
\label{2022-b}%
\end{equation}
such that $\Pi|_{(M,g,\omega)}=\pi,$ then we have the relation between the
volume form of the K\"{a}hler cone metric on the metric cone and the volume
form of the Sasaki metric on $M$
\begin{equation}
i_{\frac{\partial}{\partial r}}\overline{\omega}^{n+1}=(\Pi^{\ast}\omega
_{h})^{n}\wedge \eta. \label{2022-a}%
\end{equation}
Here $\overline{\omega}^{n+1}=r^{2n+1}(\Pi^{\ast}\omega_{h})^{n}\wedge
dr\wedge \eta.$

\section{The Sasaki-Ricci Flow on Transverse Fano}

Three-Spheres

In this section, we will consider the Sasaki-Ricci flow on the transverse Fano
three-sphere $\mathbf{S}^{3}.$ For simplicity if there is no confusion we
remove the superscript $T$, since all quantities we are considering are
transverse. Let $(x,z=x^{1}+ix^{2})$ be the foliation normal coordinates and
\[
g_{ij}=d\eta(\frac{\partial}{\partial x^{i}},\Phi \frac{\partial}{\partial
x^{j}}).
\]
Note that $R_{ij}=\frac{1}{2}Rg_{ij}$, so we consider the Sasakian-Ricci flow
as
\begin{equation}
\frac{\partial}{\partial t}g_{ij}=(r-R)g_{ij}, \label{1}%
\end{equation}
where $r$ is the average of the transverse scalar curvature $R$. It's not
difficult to see $r$ is a positive constant.

The object of this section is to remove the positive assumption of the initial
transverse scalar curvature. In fact we prove that the transverse scalar
curvature $R$ becomes positive in finite time under the Sasaki-Ricci flow.
Then, by applying Proposition \ref{P21-2} We can derive the Theorem \ref{T21}.

It is proved (\cite{h1}) that the evolution of the transverse scalar curvature
$R$
\begin{equation}
\frac{\partial}{\partial t}R=\triangle_{B}R+R(R-r). \label{2}%
\end{equation}
Then it is natural to consider the ordinary differential equation for $s=s(t)
$
\begin{equation}
\frac{d}{dt}s=s(s-r),\text{ \  \ }s(0)<\min_{x\in M}R(x,0) \label{2'}%
\end{equation}
which is obtained from the parabolic partial differential equation for $R$
simply by dropping the sub-Laplacian term. Note that $s(t)=\frac{r}{1-ce^{rt}%
}<0$ where $c>1.$

The following Harnack inequality is due to \cite{ben}. For completeness, we
sketch the proof here for the transverse scalar curvature $R$ which is a basic function.

\begin{lemma}
Suppose the flow (\ref{1}) have a solution for $t<T^{\ast}(\leq \infty)$. Then
for any two space-times points $(x,\tau)$ and $(y,T)$ with $0<\tau<T<T^{\ast}%
$, we have
\begin{equation}
R(y,T)-s(T)\geq e^{-\frac{D}{4}-c(T-\tau)}(R(x,\tau)-s(\tau)), \label{hk}%
\end{equation}
where
\[
D=D((x,\tau),(y,T))=\inf_{\gamma}\int_{\tau}^{T}|\frac{d}{dt}\gamma|_{g_{t}%
}^{2}dt.
\]
Here the infimum runs over all piece-wisely smooth curves $\gamma(t)$,
$t\in \lbrack \tau,T]$ with $\gamma(\tau)=x$ and $\gamma(T)=y$.
\end{lemma}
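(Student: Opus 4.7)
The plan is to adapt B. Chow's Harnack estimate for the Ricci flow on $\mathbf{S}^{2}$ (\cite{ben}) to the transverse setting. Since $R=R^{T}$ is basic and the leaf space is a $2$-orbifold, the evolution equation (\ref{2}) is essentially the normalized scalar-curvature equation on a surface, so Chow's argument transfers almost verbatim to the basic category.

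First, I would set $Q:=R-s$, where $s(t)$ is the ODE subsolution from (\ref{2'}). Subtracting (\ref{2'}) from (\ref{2}) gives
\[
\partial_{t}Q=\triangle_{B}Q+Q(R+s-r),
\]
and since $Q(\cdot,0)>0$ by hypothesis, the scalar maximum principle yields $Q>0$ throughout the existence interval. Define $L:=\log Q$, which is still basic. The identity $\triangle_{B}L=\triangle_{B}Q/Q-|\nabla L|^{2}$ yields
\[
\partial_{t}L=\triangle_{B}L+|\nabla L|^{2}+(R+s-r).
\]

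The heart of the argument is to establish a uniform lower bound
\[
P:=\partial_{t}L-|\nabla L|^{2}=\triangle_{B}L+(R+s-r)\geq -c
\]
on $[\tau,T]$, where $c=c(\tau,T)$ depends only on the flow up to time $T$. I would obtain this by deriving the evolution equation of $P$ under (\ref{1}) and applying the scalar maximum principle. The commutator $[\partial_{t},\triangle_{B}]$ contributes terms proportional to $R\triangle_{B}L$ (since in transverse complex dimension one the transverse Ricci equals $\tfrac{1}{2}R\,g^{T}$), while the positivity of $Q$ together with the boundedness of $s$ on $[\tau,T]$ permits the reaction terms to be absorbed.

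Finally, for any piecewise smooth curve $\gamma:[\tau,T]\to M$ joining $x$ to $y$, the fundamental theorem of calculus combined with the elementary inequality $|\nabla L|^{2}+\langle \nabla L,\dot{\gamma}\rangle\geq -\tfrac{1}{4}|\dot{\gamma}|_{g_{t}}^{2}$ gives
\[
L(y,T)-L(x,\tau)=\int_{\tau}^{T}\bigl(\partial_{t}L+\langle \nabla L,\dot{\gamma}\rangle\bigr)\,dt\geq \int_{\tau}^{T}\bigl(P-\tfrac{1}{4}|\dot{\gamma}|_{g_{t}}^{2}\bigr)\,dt.
\]
Combining with $P\geq -c$, taking the infimum over admissible curves $\gamma$, and exponentiating produces (\ref{hk}). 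The main technical obstacle is the uniform lower bound on $P$: one must carefully pair the commutator curvature terms against the reaction terms and verify that the maximum principle closes. The restriction to the three-sphere (transverse complex dimension one) is what keeps this computation tractable, since every transverse curvature quantity is controlled by $R$ itself.
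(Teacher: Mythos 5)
Your overall architecture is the same as the paper's (which follows Chow): pass to $L=\log(R-s)$ after using the maximum principle to keep $R-s>0$, bound the Harnack quantity $\partial_{t}L-|\nabla L|^{2}$ from below by a constant via the maximum principle, and then integrate $\frac{d}{dt}L(t,\gamma(t))\geq \partial_{t}L-|\nabla L|^{2}-\frac{1}{4}|\dot{\gamma}|_{g_{t}}^{2}$ along a near-minimizing path. The path-integration step and the derivation of the evolution of $L$ are correct as you state them.

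The genuine gap is in the middle step. The evolution of $Q:=\triangle_{B}L+R-r=\partial_{t}L-|\nabla L|^{2}-s$ contains, besides the good term $2\left\vert \nabla^{2}L+\tfrac{1}{2}(R-r)g\right\vert^{2}\geq Q^{2}$ and the reaction terms $(r-s)Q+s(R-r)$, the term $s|\nabla L|^{2}$ with $s<0$. Your justification --- that ``the positivity of $Q$ together with the boundedness of $s$ permits the reaction terms to be absorbed'' --- does not handle this term: $|\nabla L|^{2}$ is not a priori bounded, so a bounded negative coefficient $s$ does not make $s|\nabla L|^{2}$ controllable, and the maximum principle applied directly to $P=\partial_{t}L-|\nabla L|^{2}$ (or to $Q$) does not close. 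The paper's proof (following Chow) resolves this with a specific device you are missing: it introduces the correction $sL$, which by the a priori bound $L\geq -C-Ct$ satisfies $\partial_{t}(sL)\geq\triangle_{B}(sL)+2\langle\nabla L,\nabla(sL)\rangle-s|\nabla L|^{2}-C$ (using $2\langle\nabla L,\nabla(sL)\rangle=2s|\nabla L|^{2}$ to flip the sign of the gradient term), so that in the sum $P=Q+sL$ the two $\pm s|\nabla L|^{2}$ terms cancel and one is left with $\partial_{t}P\geq\triangle_{B}P+2\langle\nabla L,\nabla P\rangle+\tfrac{1}{2}(P^{2}-C^{2})$, to which the Riccati-type comparison and maximum principle do apply, yielding $Q\geq-3C$ for large $t$. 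Without the $sL$ correction (or an equivalent mechanism for absorbing $s|\nabla L|^{2}$), the step you flag as ``the main technical obstacle'' is not merely delicate --- it fails, and this is the one idea in the proof beyond the standard Li--Yau--Hamilton template.
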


\begin{proof}
Note that
\[
\frac{\partial}{\partial t}(R-s)=\triangle_{B}(R-s)+(R-s)(R-r+s).
\]
Since $R-s$ is positive at $t=0,$ then by maximum principle, it stays positive
for all time. So
\[
L=\log(R-s)
\]
is well defined for all times. It follows from (\ref{2}) and (\ref{2'}) that
\[
\frac{\partial L}{\partial t}=\triangle_{B}L+|\nabla L|_{g_{t}}^{2}+R-r+s.
\]
Let
\[
Q=\frac{\partial L}{\partial t}-|\nabla L|_{g_{t}}^{2}-s=\triangle_{B}L+R-r.
\]
We have
\begin{align}
\frac{\partial Q}{\partial t}  &  =\triangle_{B}Q+2g_{t}(\nabla L,\nabla
Q)+2\left \vert \nabla^{2}L+\frac{1}{2}(R-r)g\right \vert _{g_{t}}%
^{2}\label{2021-2}\\
&  +(r-s)Q+s|\nabla L|^{2}+s(R-r).
\end{align}
It is also known that $sR\geq-C$, where $C$ is a positive constant. Thus we
have
\[
\frac{\partial Q}{\partial t}\geq \triangle_{B}Q+2g_{t}(\nabla L,\nabla
Q)+Q^{2}+(r-s)Q+s|\nabla L|^{2}-C.
\]
For $sL$, we have
\[
\frac{\partial(sL)}{\partial t}=\triangle_{B}(sL)+s|\nabla L|_{g_{t}}%
^{2}+s(R-r+s)+s(s-r)L.
\]
Noting that $L\geq-C-Ct$, we have
\[
\frac{\partial(sL)}{\partial t}\geq \triangle_{B}(sL)+2\left \langle \nabla
L,\nabla(sL)\right \rangle _{g_{t}}-s|\nabla L|^{2}-C.
\]
Set $P=Q+sL$. Then we have
\[
\frac{\partial(P)}{\partial t}\geq \triangle_{B}(P)+2g_{t}(\nabla L,\nabla
P)+Q^{2}+(r-s)Q-C.
\]
Since $sL$ is bounded, we can find a positive constant $C>0$ such that for $t
$ large enough enough
\[
\frac{\partial(P)}{\partial t}\geq \triangle_{B}(P)+2g_{t}(\nabla L,\nabla
P)+\frac{1}{2}(P^{2}-C^{2}).
\]
Applying the maximum principle yields
\[
P\geq C\frac{1+ce^{Ct}}{1-ce^{Ct}},
\]
where $c>1$. So we have for $t$ large enough, we have
\[
Q\geq-3C.
\]
Then we have
\[
\frac{\partial L}{\partial t}-|\nabla L|_{g_{t}}^{2}\geq-3C+s\geq-3C-1.
\]
By the fact that $R$ is a basic function, we have
\[
g_{t}^{M}\left(  \nabla^{M}L,\frac{d}{dt}\gamma \right)  =g_{t}^{M}\left(
\nabla L,\frac{d}{dt}\gamma \right)  =g_{t}\left(  \nabla L,\frac{d}{dt}%
\gamma \right)  .
\]
Hence
\[
\frac{d}{dt}L(t,\gamma(t))=\frac{\partial L}{\partial t}+g_{t}\left(  \nabla
L,\frac{d}{dt}\gamma(t)\right)  \geq \frac{\partial L}{\partial t}-|\nabla
L|_{g_{t}}^{2}-\frac{1}{4}\left \vert \frac{d}{dt}\gamma \right \vert _{g_{t}%
}^{2}.
\]
Taking $\gamma(t)$ to be a path achieving the minima $D$, we have
\begin{align}
L(y,T)-L(x,\tau)  &  =\int_{\tau}^{T}\frac{d}{dt}L(t,\gamma(t))dt\nonumber \\
&  \geq-3C(T-\tau)-\frac{D}{4}.\nonumber
\end{align}
This completes the proof of this theorem.
\end{proof}

Now we are ready to prove Theorem \ref{T21} :

\begin{proof}
It follows from Lemma \ref{L61} that the transverse scalar curvature $R$ and
the diameter of $M$ are bounded. Thus, as in section $8$ of \cite{h1}, the
Harnack inequality (\ref{hk}) show that
\[
R-s>C>0.
\]
Since $s$ approaches zero exponentially, we conclude that $R$ becomes positive
in finite time. Then Theorem \ref{T21} follows from Proposition \ref{P21-2}.
\end{proof}

Adapt the notion as in the paper of \cite{pssw}, we first define

\begin{definition}
A metric $g$ on $\Sigma$ is said to have a conical singularity at $p_{i}$ if
it can be expressed as
\[
g=e^{f}|z|^{\beta_{i}}|dz|^{2}%
\]
near $p_{i}$, with $f(z)$ a bounded function. Here $z_{i}$ is a local
holomorphic coordinate centered at $p_{i}$ and $\beta_{i}\in(0,1)$ is a
constant. The cone angle at $p_{i}$ is $2(1-\beta_{i})\pi.$ In the content of
our current paper, we associate the conical singularity to the divisor denoted
by
\[
\beta=\sum_{i}^{k}\beta_{i}[p_{i}]
\]
in a compact Riemann surface $\Sigma$ and refer to the data $(\Sigma,\beta)$
as a pair.
\end{definition}

The orbifold Euler characteristic formula reads as%

\[
\chi(\Sigma,\beta)=\chi(\Sigma)-\sum_{i}^{k}\beta_{i}.
\]

The equation of constant Ricci curvature on the orbifold surface
$(\Sigma,z_{1},...z_{k})$ becomes%
\begin{equation}
Ric(g)=\frac{1}{2}\chi(\Sigma,\beta)g=\frac{1}{2}Rg \label{2021}%
\end{equation}
on $\Sigma \backslash \{p_{1},...p_{k}\}$ with the volume normalization
$\int_{\Sigma}d\mu=2.$

When $\chi(\Sigma,\beta)\leq0,$ it has been shown (\cite{tro}) that it always
admits a conical metric with constant Ricci curvature and such a metric is
unique up to scaling.{\ }

When $\chi(\Sigma,\beta)>0,$ it holds only when $\Sigma=\mathbf{S}^{2}$ and
\begin{equation}
\sum_{i}^{k}\beta_{i}<2. \label{1b}%
\end{equation}
Note that on a compact surface, there are no Ricci soliton solutions other
than those of constant curvature (\cite{h1}). Every bad orbifolds surface do
not admit metrics of constant curvature and so every soliton solution has
nonconstant curvature :

\begin{proposition}
\label{P22}Let $(\mathbf{S}^{2},\beta)$ be a sphere with $k$ marked points
with $\sum_{i}^{k}\beta_{i}<2$ and $k\leq2.$ Then

\begin{enumerate}
\item For $k=1:$ It is a tear-drop. The equation (\ref{2021}) does not admit a
solution. Instead, one can construct a unique rotationally symmetric compact
shrinking soliton $g$ (\cite{wu}, \cite{bm}).

\item For $k=2:$

\begin{enumerate}
\item If $\beta_{1}=\beta_{2},$ there exists a unique rotationally symmetric
compact shrinking solution of equation (\ref{2021}) (\cite{ben}, \cite{bm}).

\item If $\beta_{1}\neq \beta_{2},$ the equation (\ref{2021}) does not admit a
solution. Instead, one can construct a unique rotationally symmetric compact
shrinking soliton (\cite{wu}, \cite{bm}).
\end{enumerate}
\end{enumerate}
\end{proposition}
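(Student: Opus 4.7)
The plan is to reduce the problem to a two-point boundary value ODE via rotational symmetry, then analyse the constant-curvature compatibility condition and, in the negative cases, construct the shrinking soliton by phase-plane analysis. Throughout I work on $(\mathbf{S}^{2},\beta)$ with one conical end ($k=1$) or two ($k=2$) and impose the prescribed cone angles at each marked point.

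\textbf{Step 1 (Reduction to an ODE).} On the complement of the marked points I write a rotationally symmetric metric as $g=dr^{2}+\phi(r)^{2}d\theta^{2}$ with $r\in[0,L]$. Smoothness at a regular pole and the conical condition at a marked point translate into $\phi(0)=0$, $\phi'(0)=1-\beta_{\mathrm{left}}$, $\phi(L)=0$, $\phi'(L)=-(1-\beta_{\mathrm{right}})$, where each $\beta_{\cdot}$ equals the appropriate $\beta_{i}$ at a cone point and $0$ at a smooth pole. The Gauss curvature is $K=-\phi''/\phi$, so (\ref{2021}) reads $\phi''+\tfrac{1}{2}\chi(\mathbf{S}^{2},\beta)\,\phi=0$, a linear second-order equation with prescribed two-point boundary data.

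\textbf{Step 2 (Non-existence in the bad cases).} Multiplying $\phi''+\tfrac{1}{2}\chi\phi=0$ by $\phi'$ and integrating over $[0,L]$, the condition $\phi(0)=\phi(L)=0$ forces $|\phi'(0)|=|\phi'(L)|$. This rules out the teardrop (one pole smooth with slope $1$, the other a cone of slope $1-\beta_{1}<1$) and the asymmetric football with $\beta_{1}\neq \beta_{2}$, proving the non-existence claims in (1) and (2b); this is the analytic reflection of the fact that these are bad $2$-orbifolds in the sense of Thurston. For the symmetric football ($\beta_{1}=\beta_{2}$), the slope-matching condition is exactly compatible, and explicit integration of the linear ODE yields a unique solution up to rotation and scaling, which coincides with the round quotient $\mathbf{S}^{2}/\mathbf{Z}_{m}$; this gives (2a).

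\textbf{Step 3 (Soliton construction and main obstacle).} In the bad cases I look for a rotationally symmetric gradient shrinking soliton $\nabla^{2}f-\tfrac{1}{2}(\Delta f)g=0$ with potential $f=f(r)$. Substituting the symmetric ansatz reduces the system to a first-order autonomous ODE in the variables $(\phi,\phi',f')$, which one analyses in the phase plane to identify a trajectory leaving the prescribed endpoint at $r=0$ and reaching the other prescribed endpoint at $r=L$ with the correct cone angle; this construction is carried out in \cite{wu} and \cite{bm}. The main obstacle is the shooting/matching step at the terminal cone: one must show that the one-parameter family of trajectories emanating from the initial endpoint sweeps the admissible terminal cone angles monotonically, so that exactly one trajectory attains the prescribed $2\pi(1-\beta_{k})$. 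Once this monotonicity is established, uniqueness among all (not necessarily rotationally symmetric) shrinking solitons follows from Hamilton's observation in \cite{h1} that on a surface the rotated gradient $\Phi\nabla f$ of the soliton potential is Killing and thus generates an isometric $S^{1}$-action fixing the cone points; verifying that this action is well-defined in the orbifold category reduces the classification back to the ODE analysis above.
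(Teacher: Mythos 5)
The paper offers no proof of this proposition at all: it is imported verbatim from the literature (\cite{wu}, \cite{bm}, \cite{ben}, \cite{h1}), so your ODE reduction is already more explicit than anything in the text. Step 1 is correct (the cone condition $\phi'(0)=1-\beta$ at a marked point and $\phi'(0)=1$ at a smooth pole is the right translation), and the conserved quantity $(\phi')^{2}+\tfrac{1}{2}\chi\phi^{2}$ in Step 2 does force $|\phi'(0)|=|\phi'(L)|$ for any rotationally symmetric constant-curvature metric, which correctly singles out the equal-angle football. Step 3 identifies the right two ingredients for the soliton statements: the shooting construction of \cite{wu} and Hamilton's observation that $\Phi\nabla f$ is Killing, which reduces uniqueness to the rotationally symmetric ODE. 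Leaving the monotonicity of the shooting map to \cite{wu} and \cite{bm} is on par with what the paper itself does.

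There is, however, a genuine gap in Step 2. The conserved-quantity identity only excludes \emph{rotationally symmetric} solutions of (\ref{2021}), whereas items (1) and (2b) assert that no solution exists at all. To close this you must show that every constant positive curvature metric on $\mathbf{S}^{2}$ with at most two conical points is automatically rotationally symmetric, or give a direct obstruction. For $k=1$ the clean argument is by holonomy: the complement of the cone point is simply connected, so the developing map into the round sphere has trivial holonomy, forcing the cone angle to be an integer multiple of $2\pi$ and hence $\beta_{1}=0$. For $k=2$ the fundamental group of the complement is cyclic and the two cone holonomies are mutually inverse rotations; pushing the developing-map analysis through yields Troyanov's theorem that the two angles must coincide and the metric is the symmetric football (\cite{tro}). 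Your appeal to "bad $2$-orbifolds in the sense of Thurston" is exactly this argument when $\beta_{i}=1-1/m_{i}$, but as written it is an assertion rather than a proof, and the proposition is stated for arbitrary $\beta_{i}\in(0,1)$ with $\sum\beta_{i}<2$. Inserting the holonomy/developing-map paragraph (or a citation to \cite{tro} for it) completes the non-existence claims; the remainder of your outline is sound.
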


In the case of $k\geq3,$ the equation (\ref{2021}) admits a unique compact
shrinking soliton.(\cite{tro}, \cite{lt}) if and only if
\begin{equation}
2\max \beta_{j}<\sum_{i}^{k}\beta_{i}. \label{1aa}%
\end{equation}

In the paper of \cite{pssw}, they consider the conical Ricci flow
\begin{equation}
\frac{\partial}{\partial t}g(t)=(\chi(\mathbf{S}^{2},\beta)-R)g(t),g(0)=g_{0}
\label{1a}%
\end{equation}
on $\mathbf{S}^{2}\backslash \beta$, where $g_{0}$ is a metric of
$g_{0}=e^{u_{0}}(\Pi_{i}(\frac{1+|z|^{2}}{|z-p_{i}|^{2}})^{\beta_{i}}%
g_{FS}).u_{0}\in C^{\infty}(\mathbf{S}^{2}),\int_{\mathbf{S}^{2}}d\mu=2.$

\begin{proposition}
\label{P23}(\cite[Theorem 1.3]{pssw}) Let $(\mathbf{S}^{2},\beta)$ be a sphere
with $k$ marked points with $\sum_{i}^{k}\beta_{i}<2$ and $k\geq3.$ If
$(\mathbf{S}^{2},\beta)$ is stable which is (\ref{1aa}), then the flow
(\ref{1a}) converges in the Gromov-Hausdorff topology in $C^{\infty
}(\mathbf{S}^{2}\backslash \beta)$ to the unique conical constant curvature
metric $g_{\infty}\in c_{1}(\mathbf{S}^{2})$ on $(\mathbf{S}^{2},\beta).$
\end{proposition}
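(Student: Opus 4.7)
The plan is to recast the conical Ricci flow \eqref{1a} as a scalar parabolic Monge--Amp\`ere type equation for a conformal factor and then obtain uniform a priori estimates using the stability hypothesis \eqref{1aa}. Write $g(t)=e^{u(t)}g_{\ast}$, where $g_{\ast}=\Pi_{i}\bigl(\tfrac{1+|z|^{2}}{|z-p_{i}|^{2}}\bigr)^{\beta_{i}}g_{FS}$ is the reference conical metric. Then \eqref{1a} is equivalent to a scalar flow of the form $\partial_t u = e^{-u}(\Delta_{g_\ast}u - R_\ast) + \chi(\mathbf{S}^2,\beta)$ off the marked points, and one may further reduce to a Monge--Amp\`ere form $\partial_{t}\varphi=\log(\omega_{\ast}+i\partial\bar{\partial}\varphi)/\omega_{\ast}+\varphi-h_{\ast}$, where $h_{\ast}$ is the conical Ricci potential determined by $\mathrm{Ric}(\omega_{\ast})-\chi(\mathbf{S}^{2},\beta)\omega_{\ast}=i\partial\bar{\partial}h_{\ast}$. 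The first step is to show long-time existence of this flow in an appropriate H\"older class adapted to the conical singularities (Donaldson's $C^{2,\alpha,\beta}$-spaces), which follows from standard parabolic theory on the punctured sphere together with a barrier argument near each $p_i$ ensuring the conical asymptotics are preserved.

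The second step, which is the heart of the matter, is to obtain a uniform $C^{0}$-bound for $\varphi(t)$. The plan is to follow the Tian--Zhu / Phong--Song--Sturm--Weinkove strategy: prove that the twisted Mabuchi $K$-energy $\nu_{\omega_{\ast}}$ for the pair $(\mathbf{S}^{2},\beta)$ is proper under the stability assumption \eqref{1aa}, then use the monotonicity of $\nu_{\omega_{\ast}}$ along the flow together with Perelman-type entropy and $\lambda$-functional estimates to extract a uniform Sobolev / Moser--Trudinger inequality. Here the stability condition $2\max\beta_{j}<\sum_{i}\beta_{i}$ enters exactly as the hypothesis that makes the log $\alpha$-invariant strictly greater than $1/2$ (the Troyanov threshold), which in turn yields properness of $\nu_{\omega_{\ast}}$ via a Bando--Mabuchi--type inequality on $(\mathbf{S}^{2},\beta)$.

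Once the $C^{0}$-bound is in hand, higher-order estimates on compact subsets of $\mathbf{S}^{2}\setminus\beta$ follow from the standard Chen--He / Sesum--Tian parabolic Schauder theory, since away from the conical points the flow is a smooth K\"ahler--Ricci flow with uniformly bounded potential and scalar curvature. The curvature bound near the cone tips is obtained by comparison with model rotationally symmetric conical shrinkers and a Perelman no-local-collapsing argument adapted to conical metrics. Passing to a subsequence $t_{k}\to\infty$, one obtains Gromov--Hausdorff convergence to a conical limit metric; the limit satisfies the static equation $\mathrm{Ric}(g_{\infty})=\tfrac{1}{2}\chi(\mathbf{S}^{2},\beta)g_{\infty}$, and by uniqueness of the Troyanov--Luo--Tian conical constant curvature metric in this stable range the full flow converges. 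The main obstacle is step two: establishing properness of the twisted Mabuchi functional and, correspondingly, the uniform $C^{0}$-bound in the presence of conical singularities and a positive average curvature, since the standard integration-by-parts identities acquire boundary-type contributions at the $p_{i}$ that must be controlled via the precise conical asymptotics of $\varphi$ and of the Green's function of $\Delta_{g_{\ast}}$.
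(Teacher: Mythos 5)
This proposition is not proved in the paper at all: it is imported verbatim as \cite[Theorem 1.3]{pssw} (Phong--Song--Sturm--Wang), and the text around it contains no argument beyond the citation. So there is no internal proof to compare your attempt against; the only meaningful comparison is with the strategy of the cited source, and at the level of headlines your outline does track it --- reduction of (\ref{1a}) to a scalar conformal/Monge--Amp\`ere flow, coercivity of the Liouville (= one-dimensional Mabuchi) energy under the Troyanov condition (\ref{1aa}), Perelman-type estimates adapted to conical metrics, interior smooth estimates on $\mathbf{S}^{2}\setminus\beta$, and uniqueness of the limit via Troyanov and Luo--Tian.

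As a proof, however, what you have written is a program rather than an argument, and the gaps are exactly at the decisive points. First, the implication ``(\ref{1aa}) $\Rightarrow$ properness of the twisted Mabuchi/Liouville functional $\Rightarrow$ uniform $C^{0}$ bound on $\varphi(t)$ along the flow'' is the entire content of the theorem in the stable case; you correctly identify it as ``the main obstacle'' but do not carry it out, and in particular you do not address how the monotonicity and integration-by-parts identities survive the conical singularities (the boundary contributions at the $p_{i}$ that you mention must actually be shown to vanish for the class of potentials preserved by the flow). Second, the uniform scalar-curvature bound and no-local-collapsing near the cone tips are not ``standard'': Perelman's $\mathcal{W}$-functional arguments do not transfer verbatim to conical metrics, and establishing them is a substantial part of the cited work. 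Third, passing from subsequential Cheeger--Gromov limits to convergence of the full flow requires an additional step (monotone energy attaining its infimum at the unique critical point, or a \L ojasiewicz-type argument), which you assert but do not supply. None of this means the approach is wrong --- it is essentially the right one --- but in its present form the proposal neither proves the statement nor adds anything to the paper's treatment, which is simply to quote \cite{pssw}.
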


The Proof of Theorem \ref{T22} :

\begin{proof}
Let $(\mathbf{S}^{3},\xi,g_{0})$ be a compact quasi-regular Fano Sasakian
three-sphere. By the first structure theorem, there exists a principal
$\mathbf{S}^{1}$-orbibundle $\pi:\mathbf{S}^{3}\longrightarrow \mathbf{S}^{2}$
with $k$ marked points $\{p_{1},...p_{k}\}$ in $(\mathbf{S}^{2},z_{1}%
,...z_{k}),$ $z_{i}$ is a local holomorphic coordinate centered at $p_{i}$.
\ The orbifold structure on $(\mathbf{S}^{2},z_{1},...z_{k})$ is defined by
the local uniformizing systems $(\widetilde{U_{i}},C_{m_{i}},\varphi_{i})$
centered at the point $z_{i}$, where $C_{m_{i}}$ is the cyclic group of order
$m_{i}$ and $\varphi_{i}:\widetilde{U_{i}}\rightarrow U_{i}=\widetilde{U_{i}%
}/C_{m_{i}}$ is the branched covering map $\varphi_{i}(z)=z^{m_{i}}$. Then we
have the following orbifold first Chern class formula with codimension one
canonical divisors $[p_{i}]$ of the ramification index $m_{i}$
\[
c_{1}^{orb}(\mathbf{S}^{2})=c_{1}(\mathbf{S}^{2})-\sum_{i}^{k}(1-\frac
{1}{m_{i}}).
\]
Note that
\[
c_{1}^{B}(\mathbf{S}^{3})=\pi^{\ast}c_{1}^{orb}(\mathbf{S}^{2})
\]
and
\[
\chi(\mathbf{S}^{2},\beta)=c_{1}^{orb}(\mathbf{S}^{2}).
\]
Thus the orbifold first Chern number is satisfying
\[
\chi(\mathbf{S}^{2},\beta)=\chi(\mathbf{S}^{2})-\sum_{i}^{k}(1-\frac{1}{m_{i}%
})=2-k+\sum_{i}^{k}\frac{1}{m_{i}}.
\]
Now a metric $g$ on $\mathbf{S}^{2}$ has a conical singularity at $p_{i}$ with
cone angle at $p_{i}$ is $2(1-\beta_{i})\pi$ so that $\beta_{i}=1-\frac
{1}{m_{i}}.$

Hence $(\mathbf{S}^{3},\xi,g_{0})$ is a Fano Sasakian $3$-sphere ($c_{1}%
^{B}(\mathbf{S}^{3})>0$) only if $k\leq3$ and
\[
0<\chi(\mathbf{S}^{2},\beta)=2-k+\sum_{i}^{k}\frac{1}{m_{i}}=2-\sum_{i}%
^{k\leq3}\beta_{i}%
\]
which is the inequality (\ref{1b}).

Finally, Theorem \ref{T22} follows easily from Proposition \ref{P22} and
Proposition \ref{P23}.
\end{proof}

\section{The Sasaki-Ricci Flow on Transverse Fano Sasakian Manifolds}

\subsection{$L^{4}$-Bound of the Transverse Ricci Curvature}

In this subsection, we show the $L^{4}$-bound of the transverse Ricci
curvature under the Sasaki-Ricci flow.

\begin{theorem}
\label{T51}Let $(M,\xi,\eta_{0},\Phi_{0},g_{0},\omega_{0})$ be a compact
transverse Fano quasi-regular Sasakian $(2n+1)$-manifold and its the space $Z
$ of leaves of the characteristic foliation be well-formed\textbf{.} Then,
under the Sasaki-Ricci flow (\ref{2022}), there exists a positive constant $C
$ such that
\begin{equation}%
\begin{array}
[c]{c}%
\int_{M}|\mathrm{Ric}_{\omega(t)}^{T}|^{4}\omega(t)^{n}\wedge \eta_{0}\leq C,
\end{array}
\label{40}%
\end{equation}
for all $t\in \lbrack0,\infty).$
\end{theorem}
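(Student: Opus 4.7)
The plan is to transport Tian--Zhang's proof of the $L^{4}$ Ricci bound along the K\"ahler--Ricci flow on a Fano manifold (cf.\ \cite{tz}) to the transverse Sasakian setting. The central observation is that, by the first structure theorem (Proposition \ref{P21}), the Sasaki--Ricci flow \eqref{2022} on $M$ corresponds to an orbifold K\"ahler--Ricci flow on the well-formed leaf space $Z$, and $|\mathrm{Ric}_{\omega(t)}^{T}|^{2}$ is a basic function, so that
\begin{equation*}
\int_{M} |\mathrm{Ric}_{\omega(t)}^{T}|^{4}\, \omega(t)^{n}\wedge \eta_{0} = c\int_{Z} |\mathrm{Ric}_{\omega_{h}(t)}|^{4}\, \omega_{h}(t)^{n},
\end{equation*}
up to a universal positive constant. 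Hence the problem reduces to an integral Ricci estimate for the orbifold K\"ahler--Ricci flow on the normal Fano projective orbifold $(Z,\omega_{h}(t))$, with codimension-two orbifold singularities.

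First I would collect the Perelman-type estimates that are by now standard for the Sasaki--Ricci flow on a compact transverse Fano Sasakian manifold: a uniform $L^{\infty}$-bound on the transverse scalar curvature $R^{T}(t)$, a uniform bound on the transverse diameter, and a uniform $C^{0}$-bound on the transverse Ricci potential $u_{t}$ defined by $\mathrm{Ric}_{\omega(t)}^{T}-\omega(t)=\sqrt{-1}\,\partial_{B}\overline{\partial}_{B}u_{t}$, together with Perelman-type $\kappa$-non-collapsing. Results of this type are due to He \cite{he}, Collins--Jacob \cite{cj}, and appear in \cite{clw}. From the non-collapsing one extracts a uniform transverse Sobolev inequality that survives under the flow. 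Next I would adapt Tian--Zhang's integral identity: writing
\begin{equation*}
\int_{M} |\mathrm{Ric}^{T}|^{4}\, \omega^{n}\wedge\eta_{0} = \int_{M} |\mathrm{Ric}^{T}|^{2}\,\langle \mathrm{Ric}^{T},\mathrm{Ric}^{T}\rangle\, \omega^{n}\wedge\eta_{0},
\end{equation*}
and using the transverse Bianchi identity $\nabla^{i}R^{T}_{ij}=\tfrac{1}{2}\nabla_{j}R^{T}$, one integrates by parts to shift one pair of covariant derivatives off the Ricci tensor. This rewrites the $L^{4}$-integral in terms of $\int_{M}|\nabla^{T}\mathrm{Ric}^{T}|^{2}\,(\cdots)$ plus explicit lower-order terms that are controlled by the uniform $L^{\infty}$-bound on $R^{T}$. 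The gradient term is in turn bounded by applying the transverse Bochner--Weitzenb\"ock formula to $|\mathrm{Ric}^{T}|^{2}$, combined with the evolution equation along \eqref{2022}, and then closed by Moser iteration against the uniform transverse Sobolev inequality.

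The main obstacle will be the interplay between the integration-by-parts arguments and the foliation singular set, which corresponds to the codimension-two orbifold singularities of $Z$. Since the singular set has real codimension at least four in $M$ and all the objects involved ($\mathrm{Ric}^{T}$, $R^{T}$, $u_{t}$) are basic and come from smooth tensor fields on $M$, Stokes' theorem for basic forms is available after a standard cutoff near the singular set, and the Perelman bounds together with the Sobolev inequality descend to the orbifold $Z$ without loss. The dimension restriction to $n\le 3$ (i.e., $\dim M\le 7$) enters, as in the K\"ahler case \cite{tz}, in the last step of the Moser iteration, where the critical Sobolev exponent has to dominate the exponent $4$ in the lower-order curvature-squared terms; this is precisely the range in which the argument closes and in which Theorem \ref{T51} is needed for the subsequent convergence results.
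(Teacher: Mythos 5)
Your high-level framework --- reduce to the leaf space via the first structure theorem, invoke the Perelman-type estimates (bounded $R^{T}$, diameter, Ricci potential, non-collapsing), and then run a Tian--Zhang style integration-by-parts argument with basic forms --- matches the paper's. But the core analytic mechanism you propose does not close, in two places. First, the contracted Bianchi identity $\nabla^{i}R^{T}_{ij}=\tfrac{1}{2}\nabla_{j}R^{T}$ only controls the divergence of $\mathrm{Ric}^{T}$; it does not let you integrate by parts in $\int_{M}|\mathrm{Ric}^{T}|^{2}\langle\mathrm{Ric}^{T},\mathrm{Ric}^{T}\rangle\,\omega^{n}\wedge\eta_{0}$ so as to trade the $L^{4}$-norm of $\mathrm{Ric}^{T}$ for $\int|\nabla^{T}\mathrm{Ric}^{T}|^{2}(\cdots)$ plus controlled terms, since $\mathrm{Ric}^{T}$ itself is not exhibited as a derivative of anything bounded. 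The structure that actually makes the integration by parts work is the transverse Ricci potential: $g^{T}_{k\bar{l}}-R^{T}_{k\bar{l}}=\partial_{k}\bar{\partial}_{l}u$ with $\|u\|_{C^{0}}+\|\nabla^{T}u\|_{C^{0}}+\|\Delta_{B}u\|_{C^{0}}\le C$ (Lemma \ref{L31}). One reduces (\ref{40}) to $\int_{M}|\nabla^{T}\bar{\nabla}^{T}u|^{4}\,\omega^{n}\wedge\eta_{0}\le C$ and then moves a single bare derivative onto the uniformly bounded $\nabla^{T}u$, converting fourth powers of second derivatives of $u$ into squares of third derivatives (Lemma \ref{L42}). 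Second, your plan to bound the resulting gradient terms by the Bochner formula for $|\mathrm{Ric}^{T}|^{2}$ plus Moser iteration does not work: that Bochner formula produces a term of type $\mathrm{Rm}^{T}\ast\mathrm{Ric}^{T}\ast\mathrm{Ric}^{T}$ which is uncontrolled at this stage, and Moser iteration targets an $L^{\infty}$ curvature bound that is neither available nor needed. The paper instead bounds the third-derivative terms by $\int(|\nabla^{T}\Delta_{B}u|^{2}+|\nabla^{T}\nabla^{T}u|^{2}+|\mathrm{Rm}^{T}|^{2})$, controls $\int|\mathrm{Rm}^{T}|^{2}$ by transverse Chern--Weil theory (the basic classes $c_{1}^{B},c_{2}^{B}$ paired with the fixed class $[\omega]^{n-2}$, plus the uniform bound on $R^{T}$; Lemma \ref{L41}), and controls $\int|\nabla^{T}\Delta_{B}u|^{2}$ by a purely parabolic argument: a space-time bound $\int_{t}^{t+1}\int_{M}|\nabla^{T}\Delta_{B}u|^{2}\le C$ coming from the evolution of $(\Delta_{B}u)^{2}$, upgraded to a pointwise-in-time bound via the differential inequality $\frac{d}{dt}\int_{M}|\nabla^{T}\Delta_{B}u|^{2}\le C(1+\int_{M}|\nabla^{T}\Delta_{B}u|^{2})$ (Proposition \ref{P41}).

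A further discrepancy: the dimension restriction $n\le 3$ plays no role in Theorem \ref{T51}, which is stated and proved for all $n$. It enters only afterwards, because the Cheeger--Colding--Tian type convergence theory (Theorem \ref{T61}) requires an $L^{p}$ bound on $\mathrm{Ric}^{T}$ with $p>n$, and $p=4>n$ exactly when $\dim M=2n+1\le 7$. So your claim that $n\le3$ is needed to close a Moser iteration inside the proof of the $L^{4}$ bound itself is misplaced.
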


We follow the line in \cite{tz} and \cite{chlw} to prove this estimate. Note
that the flow (\ref{2022}) can be expressed locally as a parabolic
Monge-Amp\`{e}re equation on a basic K\"{a}hler potential $\varphi$ as in
(\ref{C}):%

\begin{equation}%
\begin{array}
[c]{c}%
\frac{d}{dt}\varphi=\log \det(g_{\alpha \overline{\beta}}^{T}+\varphi
_{\alpha \overline{\beta}})-\log \det(g_{\alpha \overline{\beta}}^{T}%
)+\varphi-u(0).
\end{array}
\label{2022-1}%
\end{equation}
Here $u(0)$ is the transverse Ricci potential of $\eta_{0}$, defined by%
\[
R_{kl}^{T}-g_{kl}^{T}=\partial_{k}\overline{\partial}_{l}u(0)
\]
which we normalize so that $\frac{1}{V}\int_{M}e^{-u(0)}\omega_{0}{}^{n}%
\wedge \eta_{0}=1.$ Let $u(t)$ be the evolving transverse Ricci potential.
Then
\begin{equation}%
\begin{array}
[c]{c}%
\partial_{k}\overline{\partial}_{l}\overset{\cdot}{\varphi}=\frac{\partial
}{\partial t}g_{kl}^{T}=g_{kl}^{T}-R_{kl}^{T}=\partial_{k}\overline{\partial
}_{l}u.
\end{array}
\label{41a}%
\end{equation}
It follows from this equation that $\varphi$ evolves by $\overset{\cdot
}{\varphi}(t)=u(t)+c(t)$, for $c(t)$ depending only on time $t$. Then by using
$c(t)$ to adjust the initial value $\varphi(0)$, we always assume that
\[%
\begin{array}
[c]{c}%
\varphi(0)=c_{0}:=\frac{1}{V}\int_{0}^{\infty}e^{-t}||\nabla^{T}\overset
{\cdot}{\varphi}(t)||_{L^{2}}^{2}dt+\frac{1}{V}\int_{M}u(0)\omega_{0}{}%
^{n}\wedge \eta_{0}.
\end{array}
\]
Since
\[%
\begin{array}
[c]{c}%
\partial_{k}\overline{\partial}_{l}(\frac{\partial u}{\partial t}%
)=\partial_{k}\overline{\partial}_{l}u+\partial_{k}\overline{\partial}%
_{l}\Delta_{B}u,
\end{array}
\]
then we can
\[%
\begin{array}
[c]{c}%
a(t)=\frac{1}{V}\int_{M}ue^{-u}\omega(t)^{n}\wedge \eta_{0}%
\end{array}
\]
such that
\[%
\begin{array}
[c]{c}%
\frac{\partial u}{\partial t}=\Delta_{B}u+u-a.
\end{array}
\]
Now by Jensen's inequality, we have $a(t)\leq0$ and then there exist a uniform
positive constant $C_{1}$ such that (\cite{co1})
\begin{equation}
-C_{1}\leq a(t)\leq0 \label{sss}%
\end{equation}
for all $t\geq0.$ Moreover, it follows from the Poincar\'{e} type inequality,
one can show that $a(t)$ increases along the Sasaki-Ricci flow, so we may
assume
\[%
\begin{array}
[c]{c}%
\underset{t\rightarrow \infty}{\lim}a(t)=a_{\infty}.
\end{array}
\]

It follows from \cite{co1} (also \cite{st}) that

\begin{lemma}
\label{L31} Let $(M^{2n+1},\xi,g_{0})$ be a compact Sasakian manifold and let
$g^{T}(t)$ be the solution of the Sasaki-Ricci flow (\ref{2022}) with the
initial transverse metric $g_{0}^{T}$. Then there exists $C$ depending only on
the initial metric such that%
\[
||u(t)||_{C^{0}}+||\nabla^{T}u(t)||_{C^{0}}+||\Delta_{B}u(t)||_{C^{0}}\leq C
\]
for all $t\geq0.$
\end{lemma}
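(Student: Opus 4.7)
The plan is to adapt Perelman's celebrated $C^{0}$-estimates for the Ricci potential along the Kähler--Ricci flow on Fano manifolds to the transverse Kähler structure of the Sasaki--Ricci flow, exploiting the fact that $u(t)$ is basic so the essential estimates reduce to those on the leaf space, performed equivariantly. The starting point is Perelman's transverse entropy functional
\[
\mathcal{W}(g^{T},f,\tau)=\int_{M}\bigl[\tau(R^{T}+|\nabla^{T}f|^{2})+f-2n\bigr](4\pi\tau)^{-n}e^{-f}\,\omega^{n}\wedge\eta
\]
restricted to basic test functions $f$ and $\tau>0$. Since all quantities involved are basic and $\xi$ generates isometries, the standard computation carries over to show monotonicity of $\mathcal{W}$ under the coupled flow for $g^{T}(t)$ and $f(t)$, together with its associated transverse $\mu$-invariant. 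This yields two consequences I will use in sequence: (i) a uniform $\kappa$-noncollapsing property along the transverse slices, and (ii) a uniform transverse Sobolev inequality, both independent of $t$.

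Next, I would record the evolution equations following from (\ref{41a}). Writing $\dot\varphi=u+c(t)$ and combining with the trace equation, one obtains
\[
\frac{\partial u}{\partial t}=\Delta_{B}u+u-a(t),
\]
and, after commuting derivatives using the transverse Bochner identity,
\[
\frac{\partial}{\partial t}(\Delta_{B}u+|\nabla^{T}u|^{2}-u)=\Delta_{B}(\Delta_{B}u+|\nabla^{T}u|^{2}-u)-|\nabla^{T}u|^{2}+\langle\nabla^{T}u,\nabla^{T}(\Delta_{B}u+|\nabla^{T}u|^{2}-u)\rangle.
\]
The transverse Ricci potential equation also gives $\Delta_{B}u+R^{T}-n = \mathrm{const}$ (up to the normalization). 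Applying the parabolic maximum principle to an auxiliary quantity of the form $Q=\Delta_{B}u+|\nabla^{T}u|^{2}-u+a$ yields an upper bound on $Q$, which combined with the lower bound $R^{T}\geq -C$ (from $\mu$-monotonicity and the evolution of $R^{T}$) gives $R^{T}\leq C$, hence $\|\Delta_{B}u\|_{C^{0}}\leq C$.

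With $\Delta_{B}u$ controlled in $L^{\infty}$ one bootstraps: multiplying $\partial_{t}u=\Delta_{B}u+u-a$ by powers of $u_{\pm}$ and integrating against the fixed volume $e^{-u}\omega^{n}\wedge\eta_{0}$, together with the transverse Sobolev inequality and the bound (\ref{sss}) on $a(t)$, produces $\|u(t)\|_{L^{p}}\leq C_{p}$ uniformly, and a Moser-type iteration on the basic subelliptic Laplacian $\Delta_{B}$ upgrades this to $\|u\|_{C^{0}}\leq C$. Finally, $\|\nabla^{T}u\|_{C^{0}}$ follows by interpolation from the $C^{0}$ bounds on $u$ and $\Delta_{B}u$, using a standard transverse gradient estimate derived from the Bochner formula applied to $|\nabla^{T}u|^{2}$.

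The main obstacle is the foundational step: establishing the monotonicity of the transverse $\mathcal{W}$-functional and the ensuing noncollapsing and Sobolev inequality in the foliated setting. The subtlety is that the $V$-manifold metric degenerates transversally in the Reeb direction, so one must work with basic functions and the transverse Riemannian submersion structure throughout, verify that the evolution of $f$ preserves basicness, and check that the transverse Bochner identities produce no extra horizontal-vertical coupling terms. Once this foundation is in place--which is essentially the content of the arguments in \cite{co1,st} adapted from \cite{st} in the Kähler case--the remaining maximum principle and Moser iteration arguments proceed mutatis mutandis from the Kähler--Ricci flow case.
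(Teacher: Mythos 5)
The paper offers no proof of this lemma: it is quoted verbatim from Collins's transverse adaptation (\cite{co1}, see also \cite{st}) of Perelman's $C^{0}$-estimates for the Ricci potential along the K\"ahler--Ricci flow, so what you are really reconstructing is that argument. Your foundational layer is the right one and matches the cited sources: monotonicity of the transverse $\mathcal{W}$-functional on basic data, the resulting $\kappa$-noncollapsing and uniform transverse Sobolev inequality, and the observation that, since $u$ is basic and $\xi$ is Killing, the Bochner identities and integrations by parts descend to the leaf space without horizontal--vertical coupling. The gaps are in the middle of the argument, where the actual bounds are extracted.

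First, the maximum-principle step as you state it fails. For the normalized flow one computes $(\partial_{t}-\Delta_{B})\bigl(\Delta_{B}u+|\nabla^{T}u|^{2}-u\bigr)=\bigl(\Delta_{B}u+|\nabla^{T}u|^{2}-u\bigr)+a-2|\nabla^{T}\overline{\nabla}^{T}u|^{2}-|\nabla^{T}\nabla^{T}u|^{2}$; the linear forcing term on the right (an unavoidable artifact of the normalization) means the parabolic maximum principle only yields bounds growing like $e^{t}$, never a uniform bound. This is exactly why Perelman and Sesum--Tian work with the quotients $|\nabla^{T}u|^{2}/(u+2B)$ and $(-\Delta_{B}u)/(u+2B)$ with $B=B(t)\geq-\inf_{M}u+1$, in which the linear terms cancel; the output is $|\nabla^{T}u|^{2}\leq C(u+2B)$ and $-\Delta_{B}u\leq C(u+2B)$, i.e.\ bounds in terms of the unknown $B(t)$, not uniform bounds. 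Only the easy direction ($\sup_{M}\Delta_{B}u\leq C$, equivalently the lower bound on $R^{T}$, via ODE comparison with $y'=y-y^{2}/n$) is a pointwise maximum-principle statement; the scalar curvature upper bound is not. Second, and this is the heart of the matter, you omit Perelman's diameter bound entirely. The uniform bound on $B(t)$ --- hence $\|u\|_{C^{0}}$, $\|\nabla^{T}u\|_{C^{0}}$ and $R^{T}\leq C$ --- is obtained by first proving $\mathrm{diam}(M,g(t))\leq C$ through a contradiction argument on transverse annuli, combining the noncollapsing with the lower bound on $\mu^{T}$; only then does $\sqrt{u+2B}\leq\sqrt{B+1}+C\,d^{T}(\cdot,x_{\min})$ together with the normalization $\frac{1}{V}\int_{M}e^{-u}\omega^{n}\wedge\eta_{0}=1$ force $B\leq C$. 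Your proposed substitute (testing against powers of $u_{\pm}$ plus Moser iteration) does not close: a Moser bound on $\sup u$ requires $\Delta_{B}u\geq-C$, which is precisely the scalar curvature upper bound you have not yet established, and the only a priori integral data, namely $\frac{1}{V}\int_{M}e^{-u}\omega^{n}\wedge\eta_{0}=1$ and $-C\leq a(t)\leq0$, controls $\|u_{-}\|_{L^{2}}$ but gives no $L^{p}$ bound on $u_{+}$. Without the quotient maximum principle and the diameter bound the estimates cannot be closed.
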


In order to prove the $L^{4}$ bound of transverse Ricci curvature (\ref{40})
under the normalized Sasaki-Ricci flow, it suffices to show that%
\begin{equation}%
\begin{array}
[c]{c}%
\int_{M}|\bigtriangledown^{T}\overline{\bigtriangledown}^{T}u(t)|^{4}%
\omega(t)^{n}\wedge \eta_{0}\leq C,
\end{array}
\label{42}%
\end{equation}
for all $t\geq0$ and for some constant $C$ independent of $t$. We need the
following Lemmas.

\begin{lemma}
\label{L41}There exists a positive constant $C=C(g_{0}^{T})$ such that%
\begin{equation}%
\begin{array}
[c]{c}%
\int_{M}[|\bigtriangledown^{T}\overline{\bigtriangledown}^{T}u|^{2}%
+|\bigtriangledown^{T}\bigtriangledown^{T}u|^{2}+|\mathrm{Rm}^{T}|^{2}%
]\omega(t)^{n}\wedge \eta_{0}\leq C,
\end{array}
\label{43}%
\end{equation}
for all $t\in \lbrack0,\infty).$
\end{lemma}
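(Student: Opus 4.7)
The plan is to follow the strategy of Tian-Zhang \cite{tz} and the earlier work \cite{chlw}, adapted to the Sasaki setting by working throughout with basic functions and the transverse K\"ahler structure. As input I would record the $C^{0}$-bounds from Lemma \ref{L31}: uniform control of $u$, $|\nabla^{T}u|$, and $\Delta_{B}u$. Tracing the identity $R^{T}_{k\overline{l}} - g^{T}_{k\overline{l}} = \partial_{k}\overline{\partial}_{l}u$ from (\ref{41a}) gives $R^{T} = n + \Delta_{B}u$, so $|R^{T}|$ is uniformly bounded. Moreover, the normalized Sasaki-Ricci flow preserves the basic K\"ahler class $[\omega(t)]_{B} = c_{1}^{B}(M)$, so the transverse volume $V = \int_{M}\omega(t)^{n}\wedge\eta_{0}$ is a topological constant. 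The three integrals in (\ref{43}) are then handled in turn, each step feeding into the next.

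For $\int_{M}|\nabla^{T}\overline{\nabla}^{T}u|^{2}$ I would use the integrated transverse Bochner identity for the basic function $u$, namely
\begin{equation*}
\int_{M}|\nabla^{T}\overline{\nabla}^{T}u|^{2}\,\omega^{n}\wedge\eta_{0}
= \int_{M}(\Delta_{B}u)^{2}\,\omega^{n}\wedge\eta_{0}
- \int_{M}\mathrm{Ric}^{T}(\nabla^{T}u,\overline{\nabla}^{T}u)\,\omega^{n}\wedge\eta_{0}.
\end{equation*}
The first term is bounded by Lemma \ref{L31}; for the second, substituting $\mathrm{Ric}^{T} = g^{T}+\partial_{B}\overline{\partial}_{B}u$ produces the bounded term $\int|\nabla^{T}u|^{2}$ plus a cubic remainder $\int u_{i\overline{j}}u^{i}u^{\overline{j}}$, which H\"older and the $C^{0}$-bound on $|\nabla^{T}u|$ estimate by $\|\nabla^{T}u\|_{\infty}^{2}V^{1/2}\bigl(\int|\nabla^{T}\overline{\nabla}^{T}u|^{2}\bigr)^{1/2}$. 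This yields a self-improving inequality $X\leq C+CX^{1/2}$ that closes to $X\leq C$. For $\int|\nabla^{T}\nabla^{T}u|^{2}$ I would apply the ordinary real integrated Bochner identity on $(M,g(t))$ to the basic function $u$, use the Sasakian decomposition $\mathrm{Ric}^{M} = \mathrm{Ric}^{T} - 2g^{T} + 2n\,\eta\otimes\eta$, and absorb the Ricci term exactly as before, now also using the bound on $\int|\nabla^{T}\overline{\nabla}^{T}u|^{2}$ just obtained together with the Kähler commutator identities to control the $(2,0)$-part of the Hessian.

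Finally, for $\int_{M}|\mathrm{Rm}^{T}|^{2}$ I would invoke Chern-Weil in basic cohomology. Because $[\omega(t)]_{B} = c_{1}^{B}(M)$ is fixed along the flow, every basic Chern-number integral $\int_{M}c_{i}^{B}\wedge c_{j}^{B}\wedge(d\eta(t))^{n-i-j}\wedge\eta_{0}$ is a topological constant. In complex dimension $n=2$ the two identities coming from $(c_{1}^{B})^{2}$ and $c_{2}^{B}$ expand into independent linear combinations of $\int(R^{T})^{2}$, $\int|\mathrm{Ric}^{T}|^{2}$, and $\int|\mathrm{Rm}^{T}|^{2}$ against $\omega^{n}\wedge\eta_{0}$; solving yields the bound. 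In complex dimension $n=3$ one uses three independent relations, for instance those coming from $(c_{1}^{B})^{3}$, $c_{1}^{B}\wedge c_{2}^{B}$, and $c_{3}^{B}$. The case $n=1$ is immediate since $|\mathrm{Rm}^{T}|^{2}$ is a multiple of $(R^{T})^{2}$ on the transverse surface. The main obstacle is this last step: one must verify that, in the transverse-K\"ahler orbifold setting where $Z$ has codimension-two singularities, the Chern-Weil formulas remain the standard smooth ones. Well-formedness of $Z$ guarantees that there are no branch-divisor corrections, so the orbifold Chern-Weil identities agree with their smooth counterparts, and the linear-algebra extraction of $\int|\mathrm{Rm}^{T}|^{2}$ from the topological integrals together with the earlier bounds on $\int(R^{T})^{2}$ and $\int|\mathrm{Ric}^{T}|^{2}$ proceeds as in \cite{tz}.
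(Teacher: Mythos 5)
Your overall strategy --- Hessian bounds via integration by parts against the uniform $C^{0}$-control of $u$, $|\nabla^{T}u|$ and $\Delta_{B}u$ from Lemma \ref{L31}, followed by a basic Chern--Weil identity for $\int_{M}|\mathrm{Rm}^{T}|^{2}$ --- is exactly the paper's, but your two commutation identities attach the Ricci term to the wrong Hessian. For a basic function on a transverse K\"ahler structure the $(1,1)$-Hessian satisfies the exact identity $\int_{M}|\nabla^{T}\overline{\nabla}^{T}u|^{2}\,\omega^{n}\wedge\eta_{0}=\int_{M}(\Delta_{B}u)^{2}\,\omega^{n}\wedge\eta_{0}$ with no curvature correction (the commutator of two $(0,1)$-derivatives on a $(1,0)$-form vanishes in the K\"ahler case), so the first bound is immediate from Lemma \ref{L31}. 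It is the $(2,0)$-Hessian that picks up the Ricci term, $\int_{M}|\nabla^{T}\nabla^{T}u|^{2}=\int_{M}\bigl[(\Delta_{B}u)^{2}-\langle\mathrm{Ric}^{T},\partial_{B}u\,\overline{\partial}_{B}u\rangle\bigr]$, and it is there that one substitutes $\mathrm{Ric}^{T}=g^{T}+\partial_{B}\overline{\partial}_{B}u$ and runs your Cauchy--Schwarz absorption (which does close, since $\int|\nabla^{T}\overline{\nabla}^{T}u|^{2}$ is already bounded by the first step). Your spurious $(1,1)$-identity happens to still yield a correct bound through the self-improving inequality, but it is not a valid identity; and your detour through the real Bochner formula on $(M,g(t))$ for the $(2,0)$-part is workable yet unnecessary, and would in any case require accounting for the nonzero mixed components $\mathrm{Hess}^{M}u(\xi,X)=-(\Phi X)u$ of the ambient Hessian of a basic function, which you do not mention.

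On the curvature term, the Chern--Weil step is the right idea and is what the paper does, citing from \cite[Lemma 7.2]{zh2} the identity for $[2c_{2}^{B}-\tfrac{n}{n+1}(c_{1}^{B})^{2}]\wedge\omega^{n-2}\wedge\eta_{0}$, whose left side is determined by basic cohomology (hence constant along the flow) and whose right side is $\int_{M}|\mathrm{Rm}^{T}|^{2}$ plus terms controlled by the scalar-curvature and Ricci bounds already in hand. Your prescription for $n=3$ is off, however: the classes $(c_{1}^{B})^{3}$, $c_{1}^{B}\wedge c_{2}^{B}$, $c_{3}^{B}$ are degree-six and, paired with $\omega^{n-3}\wedge\eta_{0}$, produce cubic curvature integrals rather than the quadratic quantity $\int_{M}|\mathrm{Rm}^{T}|^{2}$. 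For every $n\geq2$ the relevant relations are the degree-four ones, $(c_{1}^{B})^{2}\wedge\omega^{n-2}\wedge\eta_{0}$ and $c_{2}^{B}\wedge\omega^{n-2}\wedge\eta_{0}$, exactly as in the $n=2$ case, so no separate argument for $n=3$ is needed. With these two corrections your proof coincides with the paper's.
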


\begin{proof}
Applying the integration by parts, we have
\[%
\begin{array}
[c]{c}%
\int_{M}[|\bigtriangledown^{T}\overline{\bigtriangledown}^{T}u|^{2}%
\omega(t)^{n}\wedge \eta_{0}=\int_{M}(\Delta_{B}u)^{2}\omega(t)^{n}\wedge
\eta_{0},
\end{array}
\]
and also%
\[%
\begin{array}
[c]{ll}
& \int_{M}|\bigtriangledown^{T}\bigtriangledown^{T}u|^{2}\omega(t)^{n}%
\wedge \eta_{0}\\
= & \int_{M}[(\Delta_{B}u)^{2}-\left \langle \mathrm{Ric}^{T},\partial
_{B}u\overline{\partial}_{B}u\right \rangle ]\omega(t)^{n}\wedge \eta_{0}\\
= & \int_{M}[(\Delta_{B}u)^{2}-|\bigtriangledown^{T}u|^{2}+\left \langle
\partial_{B}\overline{\partial}_{B}u,\partial_{B}u\overline{\partial}%
_{B}u\right \rangle ]\omega(t)^{n}\wedge \eta_{0}\\
\leq & \int_{M}[(\Delta_{B}u)^{2}+|\bigtriangledown^{T}\overline
{\bigtriangledown}^{T}u|^{2}+|\bigtriangledown^{T}u|^{4}]\omega(t)^{n}%
\wedge \eta_{0}.
\end{array}
\]
Moreover, the $L^{2}$-bound of the transverse Riemannian curvature tensor
follows from uniformly bound of the transverse scalar curvature and the Sasaki
analogue of the Chern-Weil theory as in \cite[Lemma 7.2]{zh2}:%
\[%
\begin{array}
[c]{l}%
\frac{(2\pi)^{2}}{2^{n-2}(n-2)!}\int_{M}[2c_{2}^{B}-\frac{n}{n+1}(c_{1}%
^{B})^{2}]\wedge \omega(t)^{n-2}\wedge \eta_{0}\\
=\frac{1}{2^{n}n!}\int_{M}[|\mathrm{Rm}^{T}|-\frac{2}{n(n+1)}(R^{T})^{2}%
-\frac{(n-1)(n+2)}{n(n+1)}((R^{T})^{2}+(2n(n+1))^{2}]\omega(t)^{n}\wedge
\eta_{0}.
\end{array}
\]

\end{proof}

The following integral inequalities hold by using Lemma \ref{L31} and
integration by parts.

\begin{lemma}
\label{L42}There exists a universal positive constant $C=C(g_{0}^{T})$ such
that%
\begin{equation}%
\begin{array}
[c]{l}%
\int_{M}|\bigtriangledown^{T}\overline{\bigtriangledown}^{T}u|^{4}\omega
^{n}\wedge \eta_{0}\\
\leq C\int_{M}[|\overline{\bigtriangledown}^{T}\bigtriangledown^{T}%
\bigtriangledown^{T}u|^{2}+|\bigtriangledown^{T}\bigtriangledown^{T}%
\overline{\bigtriangledown}^{T}u|^{2}]\omega(t)^{n}\wedge \eta_{0},
\end{array}
\label{44}%
\end{equation}%
\begin{equation}%
\begin{array}
[c]{l}%
\int_{M}|\bigtriangledown^{T}\bigtriangledown^{T}u|^{4}\omega^{n}\wedge
\eta_{0}\\
\leq C\int_{M}[|\overline{\bigtriangledown}^{T}\bigtriangledown^{T}%
\bigtriangledown^{T}u|^{2}+|\bigtriangledown^{T}\bigtriangledown^{T}%
\overline{\bigtriangledown}^{T}u|^{2}+|\bigtriangledown^{T}\bigtriangledown
^{T}\bigtriangledown^{T}u|^{2}]\omega(t)^{n}\wedge \eta_{0},
\end{array}
\label{44a}%
\end{equation}
and%
\begin{equation}%
\begin{array}
[c]{l}%
\int_{M}[|\overline{\bigtriangledown}^{T}\bigtriangledown^{T}\bigtriangledown
^{T}u|^{2}+|\bigtriangledown^{T}\bigtriangledown^{T}\overline{\bigtriangledown
}^{T}u|^{2}+|\bigtriangledown^{T}\bigtriangledown^{T}\bigtriangledown
^{T}u|^{2}]\omega(t)^{n}\wedge \eta_{0}\\
\leq C\int_{M}[|\bigtriangledown^{T}\Delta^{T}u|^{2}+|\bigtriangledown
^{T}\bigtriangledown^{T}u|^{2}+|\mathrm{Rm}^{T}|^{2}]\omega(t)^{n}\wedge
\eta_{0}%
\end{array}
\label{45}%
\end{equation}
for all $t\in \lbrack0,\infty).$
\end{lemma}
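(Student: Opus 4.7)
The plan is to establish each of the three integral inequalities (\ref{44}), (\ref{44a}), (\ref{45}) by repeated integration by parts against the Sasakian volume form $\omega(t)^{n}\wedge\eta_{0}$, combined with Cauchy--Schwarz and the uniform $C^{0}$ bounds $\|u\|_{C^{0}}+\|\bigtriangledown^{T}u\|_{C^{0}}+\|\Delta_{B}u\|_{C^{0}}\leq C$ from Lemma \ref{L31}. Since $u$ is basic and all operators involved are transverse, each integration by parts can be performed in a transversely holomorphic frame and assembled globally because boundary contributions along the Reeb direction vanish by the basic condition; the computations then reduce to the standard K\"{a}hler argument of Tian--Zhang \cite{tz} adapted to the transverse K\"{a}hler metric $g^{T}(t)$.

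For (\ref{44}), write
\[
\int_{M}|\bigtriangledown^{T}\overline{\bigtriangledown}^{T}u|^{4}\omega(t)^{n}\wedge\eta_{0}=\int_{M} u_{i\bar{j}}u^{\bar{i}j}\,u_{k\bar{l}}u^{\bar{k}l}\,\omega(t)^{n}\wedge\eta_{0}
\]
and integrate by parts in the $\partial_{i}$-direction to transfer one holomorphic derivative off of $u_{i\bar{j}}$. Bounding the resulting factor $|u_{\bar{j}}|=|\bigtriangledown^{T}u|\leq C$ by Lemma \ref{L31} and collecting terms yields
\[
\int_{M}|\bigtriangledown^{T}\overline{\bigtriangledown}^{T}u|^{4}\omega(t)^{n}\wedge\eta_{0}\leq C\int_{M}|\bigtriangledown^{T}\overline{\bigtriangledown}^{T}u|^{2}\bigl(|\overline{\bigtriangledown}^{T}\bigtriangledown^{T}\bigtriangledown^{T}u|+|\bigtriangledown^{T}\bigtriangledown^{T}\overline{\bigtriangledown}^{T}u|\bigr)\omega(t)^{n}\wedge\eta_{0};
\]
a second Cauchy--Schwarz then produces (\ref{44}) after absorbing the $L^{4}$ norm on the left. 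Inequality (\ref{44a}) follows the same scheme applied to $|\bigtriangledown^{T}\bigtriangledown^{T}u|^{4}=u_{ij}u^{\bar{i}\bar{j}}u_{kl}u^{\bar{k}\bar{l}}$, but now integration by parts in the $\overline{\partial}_{\bar{j}}$-direction on the factor $u^{\bar{i}\bar{j}}$ forces one to commute $\overline{\bigtriangledown}^{T}$ past $\bigtriangledown^{T}$ on a first derivative of $u$. The Ricci identity in the transverse K\"{a}hler setting introduces, in addition to the two mixed third-derivative types appearing in (\ref{44}), the purely holomorphic third derivative $\bigtriangledown^{T}\bigtriangledown^{T}\bigtriangledown^{T}u$, thereby accounting for the extra term on the right-hand side of (\ref{44a}).

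The main work is (\ref{45}). Using $\partial_{B}\overline{\partial}_{B}u=\mathrm{Ric}^{T}-g^{T}$ and the transverse K\"{a}hler identity $g^{k\bar{l}}u_{k\bar{l}m}=g^{k\bar{l}}u_{mk\bar{l}}=\nabla_{m}\Delta_{B}u$, one sees that $|\bigtriangledown^{T}\Delta_{B}u|^{2}$ captures a full trace of the third derivative tensor. Integration by parts against this identity then expresses each of $\int|\overline{\bigtriangledown}^{T}\bigtriangledown^{T}\bigtriangledown^{T}u|^{2}$, $\int|\bigtriangledown^{T}\bigtriangledown^{T}\overline{\bigtriangledown}^{T}u|^{2}$, and $\int|\bigtriangledown^{T}\bigtriangledown^{T}\bigtriangledown^{T}u|^{2}$ as a multiple of $\int|\bigtriangledown^{T}\Delta_{B}u|^{2}$ plus commutator terms in which the transverse Riemann tensor $\mathrm{Rm}^{T}$ is contracted with $\bigtriangledown^{T}\bigtriangledown^{T}u$ and with $\bigtriangledown^{T}u$. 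Cauchy--Schwarz together with the $C^{0}$ bound on $|\bigtriangledown^{T}u|$ converts these cross terms into $\int|\bigtriangledown^{T}\bigtriangledown^{T}u|^{2}$ and $\int|\mathrm{Rm}^{T}|^{2}$, yielding (\ref{45}).

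The main obstacle I anticipate is the systematic bookkeeping of the curvature contributions arising from the commutators of transverse covariant derivatives in (\ref{45}). One must confirm that every term produced by $[\bigtriangledown^{T},\overline{\bigtriangledown}^{T}]$ acting on second derivatives of $u$ can be absorbed either by Cauchy--Schwarz against the already-present terms $\int|\bigtriangledown^{T}\bigtriangledown^{T}u|^{2}$ and $\int|\mathrm{Rm}^{T}|^{2}$, or by the uniform pointwise bounds from Lemma \ref{L31}, without introducing $L^{p}$ norms of curvature or of second derivatives that are not already controlled by Lemma \ref{L41}.
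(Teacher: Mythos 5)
Your proposal is correct and follows essentially the same route the paper intends: the paper offers no written proof of Lemma \ref{L42} beyond the one-line assertion that the inequalities follow from Lemma \ref{L31} and integration by parts (deferring to the K\"ahler computation of Tian--Zhang), and your integration-by-parts/Cauchy--Schwarz scheme with the $C^{0}$ bound on $|\bigtriangledown^{T}u|$ is exactly that argument transplanted to the transverse setting. The one point to watch, which you already flag, is that the commutator term of the form $\mathrm{Rm}^{T}\ast\bigtriangledown^{T}\bigtriangledown^{T}u\ast\bigtriangledown^{T}\bigtriangledown^{T}u$ in (\ref{45}) must be handled by Young's inequality with a small parameter and then reabsorbed through (\ref{44})--(\ref{44a}), so the three inequalities are really established jointly rather than (\ref{45}) standing alone.
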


Now we can prove a uniform bound of $\int_{M}|\bigtriangledown^{T}%
\overline{\bigtriangledown}^{T}u(t)|^{4}\omega(t)^{n}\wedge \eta_{0}$ under the
Sasaki-Ricci flow.

\begin{proposition}
\label{P41}There exists a positive constant $C=C(g_{0}^{T})$ such that%
\begin{equation}%
\begin{array}
[c]{l}%
\int_{M}[|\overline{\bigtriangledown}^{T}\bigtriangledown^{T}\bigtriangledown
^{T}u|^{2}+|\bigtriangledown^{T}\bigtriangledown^{T}\overline{\bigtriangledown
}^{T}u|^{2}+|\bigtriangledown^{T}\bigtriangledown^{T}\bigtriangledown
^{T}u|^{2}]\omega(t)^{n}\wedge \eta_{0}\\
+\int_{M}[|\bigtriangledown^{T}\overline{\bigtriangledown}^{T}u|^{4}%
+|\bigtriangledown^{T}\bigtriangledown^{T}u|^{4}]\omega(t)^{n}\wedge \eta
_{0}\leq C,
\end{array}
\label{46}%
\end{equation}
for all $t\in \lbrack0,\infty).$
\end{proposition}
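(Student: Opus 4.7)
The plan is to use the chain of inequalities from Lemma \ref{L42} to reduce Proposition \ref{P41} to a single estimate, namely a uniform $L^{2}$ bound on $\nabla^{T}\Delta_{B}u$, and then to obtain this bound from the evolution equation for $u$ together with the $C^{0}$ estimates of Lemma \ref{L31} and the $L^{2}$ estimates of Lemma \ref{L41}.

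First, observe that combining (\ref{44}), (\ref{44a}) and (\ref{45}) reduces the whole conclusion of Proposition \ref{P41} to controlling the right-hand side of (\ref{45}) uniformly in $t$. Two of the three terms there, $\int_{M}|\nabla^{T}\nabla^{T}u|^{2}\omega(t)^{n}\wedge\eta_{0}$ and $\int_{M}|\mathrm{Rm}^{T}|^{2}\omega(t)^{n}\wedge\eta_{0}$, are already uniformly bounded by Lemma \ref{L41}. Therefore the entire proposition follows once we establish
\[
\int_{M}|\nabla^{T}\Delta_{B}u|^{2}\,\omega(t)^{n}\wedge\eta_{0}\le C, \qquad t\in[0,\infty).
\]

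To establish this, I would use the Sasaki-Ricci flow transverse Ricci-potential equation
\[
\frac{\partial u}{\partial t}=\Delta_{B}u+u-a(t),
\]
so that $\nabla^{T}\Delta_{B}u=\nabla^{T}\dot u-\nabla^{T}u$. Since $\|\nabla^{T}u\|_{C^{0}}$ is uniformly bounded by Lemma \ref{L31}, this further reduces the task to proving $\int_{M}|\nabla^{T}\dot u|^{2}\omega(t)^{n}\wedge\eta_{0}\le C$. Recalling that $\dot\varphi=u+c(t)$, so that $\nabla^{T}\dot u=\nabla^{T}\ddot\varphi$, this is the transverse analogue of the Perelman--Sesum second-time-derivative energy. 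The natural functional to differentiate is
\[
E(t):=\int_{M}|\nabla^{T}\dot u|^{2}e^{-u}\,\omega(t)^{n}\wedge\eta_{0},
\]
and a direct computation using the transverse Bochner identity, the identity $(\partial_{t}-\Delta_{B})\dot u=\dot u+\langle\mathrm{Ric}^{T}-g^{T},i\partial_{B}\bar\partial_{B}u\rangle-\dot a$ obtained by differentiating the parabolic equation for $u$, and the monotonicity of $a(t)$, should yield a differential inequality of the form $\frac{d}{dt}E(t)\le-\delta E(t)+C$ for some $\delta,C>0$. Integrating this gives a uniform bound $E(t)\le C$, and since $e^{-u}$ is pinched above and below by Lemma \ref{L31}, this bound transfers to the unweighted integral.

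The principal obstacle will be the clean execution of this evolution computation in the Sasakian setting. Commuting $\nabla^{T}$ through $\Delta_{B}$ produces transverse Ricci and full transverse Riemann curvature terms whose contributions to $\frac{d}{dt}E(t)$ must be absorbed; the former is handled by the boundedness of $\Delta_{B}u=R^{T}-n$ and the $L^{2}$ curvature estimate of Lemma \ref{L41}, while the latter requires a Cauchy--Schwarz together with the third-order integrals one is trying to control, so some care is needed to avoid a circular argument (typically by adding a small multiple of $\int|\nabla^{T}\Delta_{B}u|^{2}$ to the energy and absorbing into the dissipation term). A secondary technical point is that the fixed contact form $\eta_{0}$, rather than $\eta(t)$, appears in the volume element, so every integration by parts must respect the basic-form structure; since $u,\dot u, \varphi,\dot\varphi$ and all derivatives considered are basic, this is a bookkeeping matter rather than a genuine difficulty, and the argument then runs in close parallel to the K\"ahler-Ricci scheme of \cite{tz} and its Sasaki adaptation in \cite{chlw}.
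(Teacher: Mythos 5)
Your reduction is exactly the paper's: by Lemma \ref{L42}, together with the uniform bounds on $\int_{M}|\bigtriangledown^{T}\bigtriangledown^{T}u|^{2}$ and $\int_{M}|\mathrm{Rm}^{T}|^{2}$ from Lemma \ref{L41}, everything comes down to a uniform bound on $\int_{M}|\bigtriangledown^{T}\Delta_{B}u|^{2}\omega(t)^{n}\wedge\eta_{0}$ (equivalently on $\int_{M}|\bigtriangledown^{T}\dot{u}|^{2}$, by Lemma \ref{L31}). The gap is in how you propose to get that bound. The differential inequality $\frac{d}{dt}E\leq-\delta E+C$ for $E(t)=\int_{M}|\bigtriangledown^{T}\dot{u}|^{2}e^{-u}\omega(t)^{n}\wedge\eta_{0}$ is asserted, not derived, and the computation you describe does not produce it. The transverse analogue of the Phong--Song--Sturm--Weinkove identity gives
\[
\frac{d}{dt}E\leq E-D,\qquad D\geq0,
\]
where $D$ collects the second-derivative dissipation terms; the coefficient of $E$ on the right is $+1$, not $-\delta$. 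Converting the $+E$ into $-\delta E$ would require the dissipation to dominate $(1+\delta)E$, i.e.\ a spectral gap $\lambda_{1}>1$ for the weighted Laplacian $\Delta_{B}-\bigtriangledown^{T}u\cdot\overline{\bigtriangledown}^{T}$. Since $R_{i\bar{j}}^{T}+u_{i\bar{j}}=g_{i\bar{j}}^{T}$ holds identically along the flow, the weighted Bochner formula yields only $\lambda_{1}\geq1$, with equality exactly when Hamiltonian holomorphic vector fields are present --- which is precisely the nontrivial-soliton situation this paper is aiming at. So the coercive term is unavailable in the stated generality, and $\frac{d}{dt}E\leq C(1+E)$ by itself gives only exponential growth; the monotonicity of $a(t)$ does not help, since $\dot{a}$ is spatially constant and drops out of $\bigtriangledown^{T}\dot{u}$.

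The missing ingredient is the space--time estimate (\ref{47}),
\[
\int_{t}^{t+1}\int_{M}|\bigtriangledown^{T}\Delta_{B}u|^{2}\,\omega(s)^{n}\wedge\eta_{0}\,ds\leq C,
\]
which the paper obtains by an elementary and independent argument: integrating $\tfrac{1}{2}(\partial_{t}-\Delta_{B})(\Delta_{B}u)^{2}=(\Delta_{B}u)^{2}-|\bigtriangledown^{T}\Delta_{B}u|^{2}-\Delta_{B}u\,|\bigtriangledown^{T}\overline{\bigtriangledown}^{T}u|^{2}$ over $M\times[t,t+1]$, where $-|\bigtriangledown^{T}\Delta_{B}u|^{2}$ enters with a good sign and the remaining terms are controlled by the $C^{0}$ bound on $\Delta_{B}u$ from Lemma \ref{L31} and by (\ref{43}). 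Once (\ref{47}) is in hand, a Gronwall-type inequality $\frac{d}{dt}\int_{M}|\bigtriangledown^{T}\Delta_{B}u|^{2}\leq C(1+\int_{M}|\bigtriangledown^{T}\Delta_{B}u|^{2})$ --- which the paper derives by essentially the evolution computation you sketch, absorbing the cross terms via (\ref{45}) --- upgrades the averaged bound to a pointwise-in-time bound. You need to either supply the analogue of (\ref{47}) or give a genuine justification of the $-\delta E$ term; as written the argument does not close.
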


\begin{proof}
From Lemma \ref{L42}, it is sufficient to get a uniform bound of $\int
_{M}|\bigtriangledown^{T}\Delta^{T}u|^{2}\omega(t)^{n}\wedge \eta_{0} $ under
the Sasaki-Ricci flow. Since
\[%
\begin{array}
[c]{c}%
(\frac{\partial}{\partial t}-\Delta_{B})\Delta_{B}u=\Delta_{B}%
u-|\bigtriangledown^{T}\overline{\bigtriangledown}^{T}u|^{2},
\end{array}
\]
thus%
\[%
\begin{array}
[c]{c}%
\frac{1}{2}(\frac{\partial}{\partial t}-\Delta_{B})(\Delta_{B}u)^{2}%
=(\Delta_{B}u)^{2}-|\bigtriangledown^{T}\Delta_{B}u|^{2}-\Delta_{B}%
u|\bigtriangledown^{T}\overline{\bigtriangledown}^{T}u|^{2}.
\end{array}
\]
Integrating over the manifold gives%
\[%
\begin{array}
[c]{ll}
& \int_{M}|\bigtriangledown^{T}\Delta_{B}u|^{2}\omega^{n}\wedge \eta_{0}\\
= & \int_{M}[(\Delta_{B}u)^{2}-\Delta_{B}u|\bigtriangledown^{T}\overline
{\bigtriangledown}^{T}u|^{2}-\frac{1}{2}\frac{\partial}{\partial t}(\Delta
_{B}u)^{2}]\omega^{n}\wedge \eta_{0}\\
= & \int_{M}[(\Delta_{B}u)^{2}-\Delta_{B}u|\bigtriangledown^{T}\overline
{\bigtriangledown}^{T}u|^{2}+\frac{1}{2}(\Delta_{B}u)^{3}]\omega^{n}\wedge
\eta_{0}-\frac{1}{2}\frac{d}{dt}\int_{M}(\Delta_{B}u)^{2}\omega^{n}\wedge
\eta_{0}.
\end{array}
\]
Applying the uniform bound of $\Delta_{B}u$ from Lemma \ref{L31} and
(\ref{43}), we obtain%
\begin{equation}%
\begin{array}
[c]{c}%
\int_{t}^{t+1}\int_{M}|\bigtriangledown^{T}\Delta_{B}u|^{2}\omega(s)^{n}%
\wedge \eta_{0}ds\leq C,
\end{array}
\label{47}%
\end{equation}
for all $t\geq0.$ Next we compute%
\[%
\begin{array}
[c]{ll}
& \frac{d}{dt}\int_{M}|\bigtriangledown^{T}\Delta_{B}u|^{2}\omega^{n}%
\wedge \eta_{0}\\
= & \int_{M}[|\bigtriangledown^{T}\Delta_{B}u|^{2}-|\bigtriangledown
^{T}\overline{\bigtriangledown}^{T}\Delta_{B}u|^{2}-|\bigtriangledown
^{T}\bigtriangledown^{T}\Delta_{B}u|^{2}+\Delta_{B}u|\bigtriangledown
^{T}\Delta^{T}u|^{2}]\omega^{n}\wedge \eta_{0}\\
& -\int_{M}\left[  \left \langle \bigtriangledown^{T}|\bigtriangledown
^{T}\overline{\bigtriangledown}^{T}u|^{2},\bigtriangledown^{T}\Delta
_{B}u\right \rangle +\left \langle \overline{\bigtriangledown}^{T}%
|\bigtriangledown^{T}\overline{\bigtriangledown}^{T}u|^{2},\overline
{\bigtriangledown}^{T}\Delta_{B}u\right \rangle \right]  \omega^{n}\wedge
\eta_{0}.
\end{array}
\]
By integration by parts, we obtain%
\[%
\begin{array}
[c]{ll}
& \int_{M}\left \langle \bigtriangledown^{T}|\bigtriangledown^{T}%
\overline{\bigtriangledown}^{T}u|^{2},\bigtriangledown^{T}\Delta
_{B}u\right \rangle \omega^{n}\wedge \eta_{0}\\
\leq & \frac{1}{4}\int_{M}[|\bigtriangledown^{T}\overline{\bigtriangledown
}^{T}\Delta_{B}u|^{2}+|\bigtriangledown^{T}\bigtriangledown^{T}\Delta
_{B}u|^{2}]\omega^{n}\wedge \eta_{0}\\
& +C\int_{M}[|\bigtriangledown^{T}\Delta_{B}u|^{2}+|\bigtriangledown
^{T}\bigtriangledown^{T}\overline{\bigtriangledown}^{T}u|^{2}]\omega^{n}%
\wedge \eta_{0}%
\end{array}
\]
and also%
\[%
\begin{array}
[c]{ll}
& \int_{M}\left \langle \overline{\bigtriangledown}^{T}|\bigtriangledown
^{T}\overline{\bigtriangledown}^{T}u|^{2},\overline{\bigtriangledown}%
^{T}\Delta_{B}u\right \rangle \omega^{n}\wedge \eta_{0}\\
\leq & \frac{1}{4}\int_{M}[|\bigtriangledown^{T}\overline{\bigtriangledown
}^{T}\Delta_{B}u|^{2}+|\bigtriangledown^{T}\bigtriangledown^{T}\Delta
_{B}u|^{2}]\omega^{n}\wedge \eta_{0}\\
& +C\int_{M}[|\bigtriangledown^{T}\Delta_{B}u|^{2}+|\bigtriangledown
^{T}\bigtriangledown^{T}\overline{\bigtriangledown}^{T}u|^{2}]\omega^{n}%
\wedge \eta_{0}.
\end{array}
\]
Therefore, by (\ref{45}) and Lemma \ref{L31},
\[%
\begin{array}
[c]{l}%
\frac{d}{dt}\int_{M}|\bigtriangledown^{T}\Delta_{B}u|^{2}\omega^{n}\wedge
\eta_{0}\\
\leq-\frac{1}{2}\int_{M}[|\bigtriangledown^{T}\overline{\bigtriangledown}%
^{T}\Delta_{B}u|^{2}+|\bigtriangledown^{T}\bigtriangledown^{T}\Delta_{B}%
u|^{2}]\omega^{n}\wedge \eta_{0}+C(1+\int_{M}|\bigtriangledown^{T}\Delta
_{B}u|^{2}\omega^{n}\wedge \eta_{0})\\
\leq C(1+\int_{M}|\bigtriangledown^{T}\Delta_{B}u|^{2}\omega^{n}\wedge \eta
_{0}).
\end{array}
\]
The required uniform bound of $\int_{M}|\bigtriangledown^{T}\Delta_{B}%
u|^{2}\omega^{n}\wedge \eta_{0}$ follows from this and (\ref{47}).
\end{proof}

\subsection{Longtime Behavior of Transverse Ricci potential under the
Sasaki-Ricci Flow}

In this subsection, we will show that the transverse Ricci potentials $u(t)$
which is \ a basic function behaves very well as $t\rightarrow \infty$ under
the Sasaki-Ricci flow. This implies that the limit of Sasaki-Ricci flow should
be the Sasaki-Ricci soliton in the $L^{2}$-topology.

We first define%
\[
\mu(g^{T})=\inf \{ \mathcal{W}^{T}(g^{T},f): \int_{M}e^{-f}\omega^{n}\wedge
\eta_{0}=V\},
\]
where $\mathcal{W}^{T}(g^{T},f)=(2\pi)^{-n}\int_{M}(R^{T}+|\bigtriangledown
^{T}f|^{2}+f)e^{-f}\omega^{n}\wedge \eta_{0}$ as in (\ref{2022-2}) below. Note
that
\[
\mu(g^{T})\leq \frac{1}{V}\int_{M}ue^{-u}\omega^{n}\wedge \eta_{0}.
\]

Now under the Sasaki-Ricci flow, for any backward heat equation (\cite{co1})
\begin{equation}
\frac{\partial f}{\partial t}=-\Delta_{B}f+|\bigtriangledown^{T}f|^{2}%
+\Delta_{B}u, \label{d}%
\end{equation}
we have
\[
\frac{d}{dt}\mathcal{W}^{T}(g^{T},f)=\frac{1}{V}\int_{M}(|\bigtriangledown
^{T}\overline{\bigtriangledown}^{T}(u-f)|^{2}+|\bigtriangledown^{T}%
\bigtriangledown^{T}f|^{2})e^{-f}\omega^{n}\wedge \eta_{0}\geq0
\]
and then
\[
\mu(g_{0}^{T})\leq \mu(g^{T}(t))\leq0
\]
for all $t\geq0.$

\begin{lemma}
\label{L43}Under the Sasaki-Ricci flow, for a smooth basic function $f$
\[
\int_{M}|\bigtriangledown^{T}\bigtriangledown^{T}f|^{2}\omega^{n}\wedge
\eta_{0}\leq C(g_{0}^{T})\int_{M}|\bigtriangledown^{T}\overline
{\bigtriangledown}^{T}f|^{2}\omega^{n}\wedge \eta_{0}%
\]

\end{lemma}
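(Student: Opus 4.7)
The plan is to derive the estimate by combining a transverse K\"{a}hler Bochner-type identity with the uniform $C^{1}$ bounds on the Ricci potential from Lemma \ref{L31} and a uniform basic Poincar\'{e} inequality along the flow.

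First, I would repeat the integration-by-parts calculation already carried out in the proof of Lemma \ref{L41}, but with the Ricci potential $u$ replaced by an arbitrary smooth basic function $f$. This gives the identity
\[
\int_{M}|\bigtriangledown^{T}\bigtriangledown^{T}f|^{2}\omega^{n}\wedge \eta_{0} = \int_{M}|\bigtriangledown^{T}\overline{\bigtriangledown}^{T}f|^{2}\omega^{n}\wedge \eta_{0} - \int_{M}R^{T}_{i\bar{j}}f^{i}f^{\bar{j}}\, \omega^{n}\wedge \eta_{0},
\]
in which I also use the companion identity $\int_{M}|\bigtriangledown^{T}\overline{\bigtriangledown}^{T}f|^{2} = \int_{M}(\Delta_{B}f)^{2}$ from the same lemma.

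Next, since along the Sasaki-Ricci flow the Ricci potential $u(t)$ satisfies $R^{T}_{i\bar{j}}(t) = g^{T}_{i\bar{j}}(t) + u_{i\bar{j}}(t)$ by (\ref{41a}), the curvature integral splits as
\[
\int_{M}R^{T}_{i\bar{j}}f^{i}f^{\bar{j}}\, \omega^{n}\wedge \eta_{0} = \int_{M}|\bigtriangledown^{T}f|^{2}\, \omega^{n}\wedge \eta_{0} + \int_{M}u_{i\bar{j}}f^{i}f^{\bar{j}}\, \omega^{n}\wedge \eta_{0}.
\]
The mixed term I would integrate by parts, moving one holomorphic derivative off of $u$. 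The resulting expressions contain factors involving $\bigtriangledown^{T}u$ paired with $\bigtriangledown^{T}f$, $\Delta_{B}f$ and $\bigtriangledown^{T}\bigtriangledown^{T}f$; using the uniform bound $\|\bigtriangledown^{T}u(t)\|_{C^{0}}\leq C$ from Lemma \ref{L31} together with Cauchy-Schwarz with a small parameter $\varepsilon$, they are controlled by
\[
\varepsilon \int_{M}|\bigtriangledown^{T}\bigtriangledown^{T}f|^{2} + \varepsilon \int_{M}|\bigtriangledown^{T}\overline{\bigtriangledown}^{T}f|^{2} + C_{\varepsilon}\int_{M}|\bigtriangledown^{T}f|^{2}.
\]

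Inserting this into the identity above, choosing $\varepsilon$ small, and absorbing the $|\bigtriangledown^{T}\bigtriangledown^{T}f|^{2}$ piece into the left-hand side yields
\[
\int_{M}|\bigtriangledown^{T}\bigtriangledown^{T}f|^{2} \leq C\int_{M}|\bigtriangledown^{T}\overline{\bigtriangledown}^{T}f|^{2} + C\int_{M}|\bigtriangledown^{T}f|^{2}.
\]
To close the argument I would apply a uniform basic Poincar\'{e} inequality along the flow, namely $\int_{M}|\bigtriangledown^{T}f|^{2}\leq C\int_{M}(\Delta_{B}f)^{2} = C\int_{M}|\bigtriangledown^{T}\overline{\bigtriangledown}^{T}f|^{2}$; this follows from Cauchy-Schwarz applied to $\int|\bigtriangledown^{T}f|^{2}=-\int f\Delta_{B}f$ after subtracting the basic mean of $f$, which leaves both Hessians unchanged. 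The main obstacle is securing a uniform positive lower bound on the first basic eigenvalue of $-\Delta_{B(t)}$ along the Sasaki-Ricci flow; this should be extracted from the Sasaki analogue of Perelman's no local collapsing combined with the uniform $C^{0}$ control on the Ricci potential already in hand, in analogy with the K\"{a}hler-Ricci flow setting.
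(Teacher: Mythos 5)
Your Bochner/integration-by-parts portion is exactly the paper's argument: the identity $\int_{M}|\bigtriangledown^{T}\bigtriangledown^{T}f|^{2}=\int_{M}\bigl((\Delta_{B}f)^{2}-Ric^{T}(\bigtriangledown^{T}f,\bigtriangledown^{T}f)\bigr)$, the substitution $Ric^{T}=g^{T}-\bigtriangledown^{T}\overline{\bigtriangledown}^{T}u$ from (\ref{41a}), one integration by parts on the mixed term, the $C^{0}$ bound on $\bigtriangledown^{T}u$ from Lemma \ref{L31}, and absorption of the small Hessian term. All of that is fine and reduces the lemma to the uniform Poincar\'{e}-type inequality $\int_{M}|\bigtriangledown^{T}f|^{2}\leq C(g_{0})\int_{M}(\Delta_{B}f)^{2}$.

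The gap is precisely at the step you flag as "the main obstacle," and your proposed way around it does not close it. A uniform positive lower bound on the first basic eigenvalue of $-\Delta_{B(t)}$ does not follow from Perelman-type non-collapsing plus the $C^{0}$ bound on the Ricci potential: volume lower bounds give no spectral information without accompanying curvature control (a lower Ricci bound plus a diameter bound, \`{a} la Lichnerowicz or Zhong--Yang), and a uniform lower bound on $\mathrm{Ric}^{T}$ is exactly what is \emph{not} available along the flow -- indeed the whole point of this section is to establish only integral bounds on $\mathrm{Ric}^{T}$. The correct ingredient, and the one the paper uses, is the transverse \emph{weighted} Poincar\'{e} inequality of Collins (\cite[Theorem 8.1]{co1}): for basic $f$ with $\int_{M}fe^{-u}\omega^{n}\wedge\eta_{0}=0$ one has $\int_{M}f^{2}e^{-u}\omega^{n}\wedge\eta_{0}\leq\int_{M}|\bigtriangledown^{T}f|^{2}e^{-u}\omega^{n}\wedge\eta_{0}$, a Futaki/Bakry--\'{E}mery-type estimate with constant $1$ that exploits the structural identity $\mathrm{Ric}^{T}+\bigtriangledown^{T}\overline{\bigtriangledown}^{T}u=g^{T}$ of the normalized transverse Fano flow rather than any pointwise curvature bound. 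Combined with $\|u\|_{C^{0}}\leq C$ from Lemma \ref{L31} this converts to the unweighted inequality $\int_{M}f^{2}\leq C(g_{0})\int_{M}|\bigtriangledown^{T}f|^{2}$, and then the elementary estimate $\int_{M}|\bigtriangledown^{T}f|^{2}=-\int_{M}f\Delta_{B}f\leq\frac{1}{2C}\int_{M}f^{2}+2C\int_{M}(\Delta_{B}f)^{2}$ yields $\int_{M}|\bigtriangledown^{T}f|^{2}\leq C(g_{0})\int_{M}(\Delta_{B}f)^{2}$. You should replace the appeal to non-collapsing by this weighted Poincar\'{e} inequality; with that substitution your argument is complete and coincides with the paper's.
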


\begin{proof}
We may assume that $\int_{M}fe^{-u}\omega^{n}\wedge \eta_{0}=0.$ It follows
from \cite[Theorem 8.1]{co1}, we have the transverse weighted Poincar\'{e}
inequality%
\[
\frac{1}{V}\int_{M}f^{2}e^{-u}\omega^{n}\wedge \eta_{0}\leq \frac{1}{V}\int
_{M}|\bigtriangledown^{T}f|^{2}e^{-u}\omega^{n}\wedge \eta_{0}+(\frac{1}{V}%
\int_{M}fe^{-u}\omega^{n}\wedge \eta_{0})^{2}%
\]
for all basic function $f\in C_{B}^{\infty}(M;R).$ Thus
\[
\int_{M}f^{2}e^{-u}\omega^{n}\wedge \eta_{0}\leq \int_{M}|\bigtriangledown
^{T}f|^{2}e^{-u}\omega^{n}\wedge \eta_{0}.
\]
It follows from Lemma \ref{L31} that
\[
\int_{M}f^{2}\omega^{n}\wedge \eta_{0}\leq C(g_{0})\int_{M}|\bigtriangledown
^{T}f|^{2}\omega^{n}\wedge \eta_{0}.
\]
Thus
\[%
\begin{array}
[c]{ccl}%
\int_{M}|\bigtriangledown^{T}f|^{2}\omega^{n}\wedge \eta_{0} & = & -\int
_{M}f\Delta_{B}f\omega^{n}\wedge \eta_{0}\\
& \leq & \frac{1}{2C}\int_{M}f^{2}\omega^{n}\wedge \eta_{0}+2C\int_{M}%
(\Delta_{B}f)^{2}\omega^{n}\wedge \eta_{0}\\
& \leq & \frac{1}{2}\int_{M}|\bigtriangledown^{T}f|^{2}\omega^{n}\wedge
\eta_{0}+2C\int_{M}(\Delta_{B}f)^{2}\omega^{n}\wedge \eta_{0}%
\end{array}
\]
and
\[
\int_{M}|\bigtriangledown^{T}f|^{2}\omega^{n}\wedge \eta_{0}\leq C(g_{0}%
)\int_{M}(\Delta_{B}f)^{2}\omega^{n}\wedge \eta_{0}.
\]
On the other hand,
\[%
\begin{array}
[c]{ccl}%
\int_{M}|\bigtriangledown^{T}\bigtriangledown^{T}f|^{2}\omega^{n}\wedge
\eta_{0} & = & \int_{M}((\Delta_{B}f)^{2}-Ric^{T}(\bigtriangledown
^{T}f,\bigtriangledown^{T}f))\omega^{n}\wedge \eta_{0}\\
& = & \int_{M}((\Delta_{B}f)^{2}-|\bigtriangledown^{T}f|^{2})\omega^{n}%
\wedge \eta_{0}+\int_{M}\bigtriangledown_{i}^{T}\overline{\bigtriangledown}%
_{j}^{T}u\overline{\bigtriangledown}_{i}^{T}f\bigtriangledown_{j}^{T}%
f\omega^{n}\wedge \eta_{0}\\
& = & \int_{M}((\Delta_{B}f)^{2}-|\bigtriangledown^{T}f|^{2})\omega^{n}%
\wedge \eta_{0}\\
&  & -\int_{M}\overline{\bigtriangledown}_{j}^{T}u(\Delta_{B}f\bigtriangledown
_{j}^{T}f+\overline{\bigtriangledown}_{i}^{T}f\bigtriangledown_{i}%
^{T}\bigtriangledown_{j}^{T}f)\omega^{n}\wedge \eta_{0}\\
& \leq & \int_{M}((\Delta_{B}f)^{2}-|\bigtriangledown^{T}f|^{2})\omega
^{n}\wedge \eta_{0}\\
&  & +\int_{M}((\Delta_{B}f)^{2}+\frac{1}{2}|\bigtriangledown^{T}%
\bigtriangledown^{T}f|^{2}+|\overline{\bigtriangledown}_{j}^{T}u|^{2}%
|\bigtriangledown^{T}f|^{2})\omega^{n}\wedge \eta_{0}\\
& \leq & \int_{M}(2(\Delta_{B}f)^{2}+\frac{1}{2}|\bigtriangledown
^{T}\bigtriangledown^{T}f|^{2}+C|\bigtriangledown^{T}f|^{2})\omega^{n}%
\wedge \eta_{0}%
\end{array}
\]
and then
\[%
\begin{array}
[c]{ccl}%
\int_{M}|\bigtriangledown^{T}\bigtriangledown^{T}f|^{2}\omega^{n}\wedge
\eta_{0} & \leq & \int_{M}(4(\Delta_{B}f)^{2}+2C|\bigtriangledown^{T}%
f|^{2})\omega^{n}\wedge \eta_{0}\\
& \leq & C\int_{M}(\Delta_{B}f)^{2}\omega^{n}\wedge \eta_{0}\\
& \leq & C(g_{0})\int_{M}|\bigtriangledown^{T}\overline{\bigtriangledown}%
^{T}f|^{2}\omega^{n}\wedge \eta_{0}.
\end{array}
\]

\end{proof}

\begin{theorem}
\label{T42} Under the Sasaki-Ricci flow
\begin{equation}
\int_{0}^{\infty}\int_{M}|\bigtriangledown^{T}\bigtriangledown^{T}u|^{2}%
\omega(t)^{n}\wedge \eta_{0}\wedge dt<\infty. \label{a1}%
\end{equation}
In particular,
\begin{equation}
\int_{M}|\bigtriangledown^{T}\bigtriangledown^{T}u|^{2}\omega(t)^{n}\wedge
\eta_{0}\rightarrow0 \label{a2}%
\end{equation}
as $t\rightarrow \infty.$
\end{theorem}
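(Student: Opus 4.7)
The plan is to exploit Perelman's $\mathcal{W}$-entropy monotonicity recorded just above. For each $T > 0$, let $f_T$ denote a minimizer of $\mathcal{W}^T(g^T(T), \cdot)$ subject to $\int_M e^{-f}\omega(T)^n\wedge\eta_0 = V$, so that $\mathcal{W}^T(g^T(T), f_T) = \mu(g^T(T))$. Solving the backward heat equation (\ref{d}) on $[0,T]$ with terminal data $f_T$ produces a smooth basic family $f(t)$, and the monotonicity yields
\begin{equation*}
\int_0^T \frac{1}{V}\int_M \bigl(|\nabla^T\overline{\nabla}^T(u-f)|^2 + |\nabla^T\nabla^T f|^2\bigr) e^{-f} \omega(t)^n \wedge \eta_0\, dt \,\le\, \mu(g^T(T)) - \mu(g_0^T) \,\le\, -\mu(g_0^T),
\end{equation*}
uniformly in $T$, since $\mu(g^T(t)) \in [\mu(g_0^T), 0]$ for all $t \ge 0$.

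Next, using elliptic regularity for the Euler--Lagrange equation of the minimizer together with a maximum principle argument for (\ref{d}) (whose coefficients are controlled by the uniform bounds in Lemma \ref{L31}), one obtains uniform $C^k$ bounds on $f(t)$, $t \in [0,T]$, independent of $T$. In particular the two-sided bound $C^{-1} \le e^{-f} \le C$ holds, and a diagonal subsequential limit $T_j \to \infty$ produces a smooth basic function $f$ on $M \times [0,\infty)$ solving (\ref{d}) with
\begin{equation*}
\int_0^\infty \int_M \bigl(|\nabla^T\overline{\nabla}^T(u-f)|^2 + |\nabla^T\nabla^T f|^2\bigr) \omega(t)^n \wedge \eta_0\, dt \,<\, \infty.
\end{equation*}

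To pass from $f$ to $u$, choose for each $t$ a constant $c(t)$ so that the basic function $h(t) := u(t) - f(t) - c(t)$ satisfies $\int_M h\, e^{-u}\omega(t)^n\wedge\eta_0 = 0$. Since constants have vanishing Hessians, Lemma \ref{L43} applied to $h$ gives
\begin{equation*}
\int_M |\nabla^T\nabla^T(u-f)|^2 \omega^n \wedge \eta_0 \,\le\, C \int_M |\nabla^T\overline{\nabla}^T(u-f)|^2 \omega^n \wedge \eta_0,
\end{equation*}
and the triangle inequality $|\nabla^T\nabla^T u|^2 \le 2(|\nabla^T\nabla^T(u-f)|^2 + |\nabla^T\nabla^T f|^2)$ proves (\ref{a1}). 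For the pointwise decay (\ref{a2}), set $I(t) := \int_M |\nabla^T\nabla^T u|^2 \omega(t)^n\wedge\eta_0$; differentiating in $t$ and invoking (\ref{41a}) together with the uniform $L^2$ bound on $|\nabla^T\nabla^T\nabla^T u|$ and the $L^4$ bound on $|\nabla^T\nabla^T u|$ from Proposition \ref{P41}, one checks $|I'(t)| \le C$ uniformly. Since $I \ge 0$, $\int_0^\infty I(t)\,dt < \infty$, and $|I'|$ is bounded, a standard argument forces $I(t) \to 0$ as $t \to \infty$.

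The main obstacle will be obtaining the uniform $C^k$ regularity and non-degeneracy $e^{-f}\sim 1$ of the backward trajectory $f(t)$, independently of the terminal time $T$. In the K\"{a}hler--Ricci setting this is classical (Perelman's $\kappa$-noncollapsing, a Harnack estimate for the conjugate heat kernel, and elliptic regularity for the $\mathcal{W}$-minimizer's Euler--Lagrange equation); the Sasaki analogues should hold under the transverse Fano quasi-regular hypothesis and follow from the uniform scalar curvature, diameter, and basic Sobolev/Poincar\'{e} estimates already used in Lemma \ref{L43} and throughout section~4.
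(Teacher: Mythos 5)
Your overall strategy is the one the paper uses: Perelman's transverse entropy monotonicity applied to a backward solution of (\ref{d}) emanating from a $\mathcal{W}^{T}$-minimizer, followed by Lemma \ref{L43} to convert control of $|\nabla^{T}\overline{\nabla}^{T}(u-f)|$ into control of $|\nabla^{T}\nabla^{T}(u-f)|$, and finally the bound $I'(t)\leq C$ (via Lemma \ref{L31}, Lemma \ref{L41} and Proposition \ref{P41}) to upgrade the space-time integrability to decay. The final two steps of your write-up match the paper essentially verbatim, and your explicit handling of the normalization constant before invoking Lemma \ref{L43} is a point the paper leaves implicit.

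However, there is a genuine gap in your implementation of the entropy step, and you have correctly located it yourself: you solve the backward equation (\ref{d}) on the \emph{entire} interval $[0,T]$ from a single terminal minimizer $f_{T}$, and you then need uniform $C^{k}$ bounds and a two-sided bound $C^{-1}\leq e^{-f(t)}\leq C$ for all $t\in[0,T]$, \emph{independently of $T$}, both to discard the weight $e^{-f}$ in the space-time integral and to run the diagonal limit $T_{j}\to\infty$. This is not available from the tools you cite. The function $e^{-f}$ solves a conjugate-heat-type equation backward in time with conserved total mass, and a pointwise lower bound at times arbitrarily far before the terminal time would amount to a Harnack inequality whose constant does not degrade with the elapsed time; the classical estimates (Perelman, Tian--Zhu, Tian--Zhang) give such control only over time intervals of \emph{fixed} length ending at the time where $f$ is a minimizer. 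This is precisely why the paper does not use a single long trajectory: it takes, for each integer $k$, a minimizer $f_{k}$ of $\mu(g^{T}(k))$ and solves (\ref{d}) backward only on $[k-1,k]$, obtaining
\[
\frac{1}{V}\int_{k-1}^{k}\int_{M}\bigl(|\nabla^{T}\nabla^{T}f_{k}|^{2}+|\nabla^{T}\overline{\nabla}^{T}(u-f_{k})|^{2}\bigr)e^{-f_{k}}\,\omega^{n}\wedge\eta_{0}\,dt\;\leq\;\mu(g^{T}(k))-\mu(g^{T}(k-1)),
\]
and then telescoping the right-hand side using $\mu(g_{0}^{T})\leq\mu(g^{T}(t))\leq 0$. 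On each unit interval the needed bound $f_{k}\leq C$ (hence $e^{-f_{k}}\geq c>0$) is a standard local-in-time estimate for the backward trajectory of a $\mathcal{W}^{T}$-minimizer, uniform in $k$. With that substitution your argument closes; without it, the passage from the weighted to the unweighted space-time integral, and the extraction of a limiting $f$ on $[0,\infty)$, are unjustified.
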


\begin{proof}
Let $f_{k}$ be a minimizer of $\mu(g^{T}(k))$ with $\int_{M}e^{-f_{k}}%
\omega^{n}\wedge \eta_{0}=V$ and $f_{k}(t)$ be the solution of the backward
heat equation (\ref{d}) on the time interval $[k-1,k]$. Then
\[
\frac{1}{V}\int_{k-1}^{k}\int_{M}(|\bigtriangledown^{T}\bigtriangledown
^{T}f_{k}|^{2}+|\bigtriangledown^{T}\overline{\bigtriangledown}^{T}%
(u-f_{k})|^{2})e^{-f_{k}}\omega^{n}\wedge \eta_{0}\wedge dt\leq \mu
(g^{T}(k))-\mu(g^{T}(k-1)).
\]

Hence by using $\mu(g^{T}(t))\leq0$
\[
\sum_{k=1}^{\infty}\int_{k-1}^{k}\int_{M}(|\bigtriangledown^{T}%
\bigtriangledown^{T}f_{k}|^{2}+|\bigtriangledown^{T}\overline{\bigtriangledown
}^{T}(u-f_{k})|^{2})\omega^{n}\wedge \eta_{0}\wedge dt\leq C(g_{0}^{T}).
\]
On the other hand, by applying the above estimate and Lemma \ref{L43} to
$(u-f_{k}),$ we have%
\[%
\begin{array}
[c]{ccl}%
\int_{0}^{\infty}\int_{M}|\bigtriangledown^{T}\bigtriangledown^{T}u|^{2}%
\omega(t)^{n}\wedge \eta_{0}\wedge dt & \leq & \sum_{k=1}^{\infty}\int
_{M}(2|\bigtriangledown^{T}\bigtriangledown^{T}f_{k}|^{2}+2|\bigtriangledown
^{T}\bigtriangledown^{T}(u-f_{k})|^{2})\omega^{n}\wedge \eta_{0}\wedge dt\\
& \leq & \sum_{k=1}^{\infty}\int_{M}(2|\bigtriangledown^{T}\bigtriangledown
^{T}f_{k}|^{2}+2C|\bigtriangledown^{T}\overline{\bigtriangledown}^{T}%
(u-f_{k})|^{2})\omega^{n}\wedge \eta_{0}\wedge dt\\
& \leq & C(g_{0}^{T}).
\end{array}
\]

This is the estimate (\ref{a1}).

Next, it follows from the straight computation that%
\[
\frac{\partial}{\partial t}|\bigtriangledown^{T}\bigtriangledown^{T}%
u|^{2}=\Delta_{B}|\bigtriangledown^{T}\bigtriangledown^{T}u|^{2}%
-|\bigtriangledown^{T}\bigtriangledown^{T}\bigtriangledown^{T}u|^{2}%
-|\overline{\bigtriangledown}^{T}\bigtriangledown^{T}\bigtriangledown
^{T}u|^{2}-2\mathrm{Rm}^{T}(\bigtriangledown^{T}\bigtriangledown
^{T}u.\overline{\bigtriangledown}^{T}\overline{\bigtriangledown}^{T}u)
\]

and%

\[%
\begin{array}
[c]{l}%
\frac{d}{dt}\int_{M}|\bigtriangledown^{T}\bigtriangledown^{T}u|^{2}%
\omega(t)^{n}\wedge \eta_{0}\\
\leq \int_{M}[(||\nabla u(t)||_{C^{0}}^{2}+||\Delta_{B}u(t)||_{C^{0}%
})|\bigtriangledown^{T}\bigtriangledown^{T}u|^{2}+|\bigtriangledown
^{T}\bigtriangledown^{T}\overline{\bigtriangledown}^{T}u|^{2}+||\nabla
u(t)||_{C^{0}}^{2}|\mathrm{Rm}^{T}|^{2}]\omega(t)^{n}\wedge \eta_{0}.
\end{array}
\]
Then, from Lemma \ref{L31}, Lemma \ref{L41} and Proposition \ref{P41}, we
have
\[
\frac{d}{dt}\int_{M}|\bigtriangledown^{T}\bigtriangledown^{T}u|^{2}%
\omega(t)^{n}\wedge \eta_{0}\leq C(g_{0}^{T}).
\]
Hence
\[
\int_{M}|\bigtriangledown^{T}\bigtriangledown^{T}u|^{2}\omega(t)^{n}\wedge
\eta_{0}\rightarrow0
\]

as $t\rightarrow \infty.$
\end{proof}

Similarly, we have

\begin{theorem}
\label{T41} Under the Sasaki-Ricci flow,
\begin{subequations}
\begin{equation}
\int_{t}^{t+1}\int_{M}|\bigtriangledown^{T}(\Delta_{B}u-|\bigtriangledown
^{T}u|^{2}+u)|^{2}\omega(t)^{n}\wedge \eta_{0}\wedge ds\rightarrow0 \label{b1}%
\end{equation}
as $t\rightarrow \infty$ and then
\end{subequations}
\begin{subequations}
\begin{equation}
\int_{M}(\Delta_{B}u-|\bigtriangledown^{T}u|^{2}+u-a)^{2}\omega(t)^{n}%
\wedge \eta_{0}\rightarrow0 \label{b2}%
\end{equation}
as $t\rightarrow \infty.$
\end{subequations}
\end{theorem}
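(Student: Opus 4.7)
The quantity $F := \Delta_B u - |\nabla^T u|^2 + u$ is the natural soliton defect: on a gradient shrinking Sasaki-Ricci soliton, $g^T$ evolves along the flow $\partial_t\omega = \omega - \rho^T$ purely by the diffeomorphism generated by $\tfrac{1}{2}\nabla u$, so $\partial_t u = |\nabla^T u|^2$. Combined with the evolution $\partial_t u = \Delta_B u + u - a$ from (\ref{41a}), this gives $\Delta_B u - |\nabla^T u|^2 + u = a$ identically on the soliton, i.e.\ $F - a \equiv 0$. Furthermore, transverse integration by parts (valid since $u$ is basic and $\omega(t)^n\wedge\eta_0$ is the natural measure on $M$) gives $\int_M \Delta_B u \cdot e^{-u}\omega^n\wedge\eta_0 = \int_M |\nabla^T u|^2 e^{-u}\omega^n\wedge\eta_0$, so
\[
\frac{1}{V}\int_M F\,e^{-u}\omega^n\wedge\eta_0 = \frac{1}{V}\int_M u\,e^{-u}\omega^n\wedge\eta_0 = a.
\]
Thus $F-a$ has zero $e^{-u}$-weighted mean at every time, which is precisely what makes the Poincar\'{e}-type argument for (b2) go through.

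For (b1) the plan is to derive a pointwise identity relating $\nabla^T F$ to $\nabla^T\nabla^T u$ (up to terms controlled by the higher-derivative estimates of this subsection). Computing in foliated coordinates one finds
\[
\nabla_k F = \Delta_B u_k - u_{ki}u^i - u_{k\bar j}u^{\bar j} + u_k,
\]
and using the transverse K\"{a}hler commutator $\Delta_B u_k = \nabla_k \Delta_B u$ (which has no Ricci correction because the off-type components of the transverse Riemann tensor vanish) together with the Ricci-potential relation $u_{k\bar j} = R^T_{k\bar j} - g^T_{k\bar j}$ and the contracted Bianchi identity $\nabla_k R^T = g^{i\bar j}\nabla_i R^T_{k\bar j}$, the purely Ricci-type contributions collapse. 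What is left is $u_{ki}u^i$ together with products of $\nabla^T u$ with $\nabla^T\bar\nabla^T u$; the uniform $C^0$-bound on $|\nabla^T u|$ from Lemma \ref{L31} and the estimates of Lemma \ref{L41} and Proposition \ref{P41} absorb these into quantities integrable in time, leaving the dominant term bounded by $|\nabla^T\nabla^T u|^2$. Integrating over $M$ and then over $[t,t+1]$ and invoking Theorem \ref{T42}(a1) then yields (b1).

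For (b2), apply the transverse weighted Poincar\'{e} inequality from \cite[Theorem 8.1]{co1} (as used in Lemma \ref{L43}) to $F-a$, whose $e^{-u}$-mean vanishes, to get
\[
\int_M (F-a)^2 e^{-u}\omega^n\wedge\eta_0 \leq C\int_M |\nabla^T F|^2 e^{-u}\omega^n\wedge\eta_0,
\]
and then strip the uniformly bounded weight $e^{-u}$ using Lemma \ref{L31} to obtain $\int_M(F-a)^2 \leq C\int_M|\nabla^T F|^2$. Combined with (b1) this yields $\int_t^{t+1}\int_M (F-a)^2\,ds \to 0$. To promote this time-averaged decay to pointwise-in-$t$ convergence, I would compute $\frac{d}{dt}\int_M(F-a)^2\omega^n\wedge\eta_0$ and show it is uniformly bounded using Lemma \ref{L31} and Proposition \ref{P41}, exactly as in the concluding step of the proof of Theorem \ref{T42}; the combination of the time-integrated decay with a uniform time-derivative bound then forces $\int_M(F-a)^2 \to 0$ as $t\to\infty$.

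The main obstacle is the Step-1 pointwise identity for $\nabla^T F$: the transverse K\"{a}hler commutators in a Sasakian setting must be handled carefully because $u$ is basic rather than defined on a global K\"{a}hler manifold, and the signs in the Ricci-potential relation together with the contracted Bianchi identity have to be arranged so that only $|\nabla^T\nabla^T u|^2$ and quantities already integrable in $t$ by Theorem \ref{T42}(a1) and Proposition \ref{P41} survive on the right-hand side. Once this identity is in place, the remaining steps are direct adaptations of the framework developed for Theorem \ref{T42}, with its pair of statements (a1)-(a2) replaced by their $F$-analogues (b1)-(b2).
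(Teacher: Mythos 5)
Your treatment of (\ref{b2}) is essentially the paper's: set $H=\Delta_Bu-|\bigtriangledown^Tu|^2+u-a$, note $\int_MHe^{-u}\omega^n\wedge\eta_0=0$, apply the weighted Poincar\'e inequality and strip the bounded weight to get $\int_MH^2\leq C\int_M|\bigtriangledown^TH|^2$, and then upgrade the resulting time-averaged decay to pointwise decay via the evolution equation $\partial_tH=\Delta_BH+H-\frac{da}{dt}+|\bigtriangledown^T\bigtriangledown^Tu|^2$ and Proposition \ref{P41}. That part is correct.

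The gap is in your Step 1 for (\ref{b1}). The exact identity (which the paper records) is
\[
\bigtriangledown_i^T\bigl(\Delta_Bu-|\bigtriangledown^Tu|^2+u\bigr)=\overline{\bigtriangledown}_j^T\bigtriangledown_i^T\bigtriangledown_j^Tu-\bigtriangledown_i^T\bigtriangledown_j^Tu\,\overline{\bigtriangledown}_j^Tu ,
\]
obtained by commuting $\overline{\bigtriangledown}_j^T$ past $\bigtriangledown_i^T$: the commutator produces a Ricci term $R_{i\bar l}^T\bigtriangledown^{\bar l}u$, and it is exactly this term that cancels $+\bigtriangledown_i^Tu-u_{i\bar l}\bigtriangledown^{\bar l}u$ via the potential relation $R^T_{i\bar l}=g^T_{i\bar l}-u_{i\bar l}$. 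Your assertion that the commutator $\Delta_B\bigtriangledown_k^Tu=\bigtriangledown_k^T\Delta_Bu$ carries no Ricci correction is false, and more importantly your conclusion that after the cancellations ``the dominant term is bounded by $|\bigtriangledown^T\bigtriangledown^Tu|^2$'' is wrong: $\bigtriangledown^TF$ necessarily contains the genuinely third-order quantity $\overline{\bigtriangledown}_j^T\bigtriangledown_i^T\bigtriangledown_j^Tu$ (the divergence of the $(2,0)$-Hessian), which is not controlled by $|\bigtriangledown^T\bigtriangledown^Tu|^2$, nor by Theorem \ref{T42}(\ref{a1}). The whole point of the $\int_t^{t+1}$ averaging in (\ref{b1}) is that one must separately prove $\int_t^{t+1}\int_M|\overline{\bigtriangledown}^T\bigtriangledown^T\bigtriangledown^Tu|^2\rightarrow0$, which the paper extracts from the evolution equation of $|\bigtriangledown^T\bigtriangledown^Tu|^2$ (whose right-hand side contains $-|\overline{\bigtriangledown}^T\bigtriangledown^T\bigtriangledown^Tu|^2$), combined with (\ref{a2}) and the $L^2$/$L^4$ bounds of subsection $4.1$, following \cite[Proposition 3.2]{tz}. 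Your proposal omits this step entirely because the (incorrect) bookkeeping makes the term disappear; as written, the argument for (\ref{b1}) does not close. (Also note that terms of the form $|\bigtriangledown^Tu|^2|\bigtriangledown^T\overline{\bigtriangledown}^Tu|^2$, which you propose to ``absorb,'' are only known to be bounded, not decaying, at this stage; in the correct identity they cancel exactly and never appear.)
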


\begin{proof}
Note that by the transverse Ricci potential relation
\[
\bigtriangledown_{i}^{T}(\Delta_{B}u-|\bigtriangledown^{T}u|^{2}%
+u)=\overline{\bigtriangledown}_{j}^{T}\bigtriangledown_{i}^{T}%
\bigtriangledown_{j}^{T}u-\bigtriangledown_{i}^{T}\bigtriangledown_{j}%
^{T}u\overline{\bigtriangledown}_{j}^{T}u
\]
and then
\[
|\bigtriangledown^{T}(\Delta_{B}u-|\bigtriangledown^{T}u|^{2}+u)|^{2}%
\leq2(|\bigtriangledown_{i}^{T}\bigtriangledown_{j}^{T}u|^{2}|\overline
{\bigtriangledown}_{j}^{T}u|^{2}+|\overline{\bigtriangledown}_{j}%
^{T}\bigtriangledown_{i}^{T}\bigtriangledown_{j}^{T}u|^{2}).
\]
In order to derive (\ref{b1}), it suffices to prove that
\begin{equation}
\int_{t}^{t+1}\int_{M}|\overline{\bigtriangledown}_{j}^{T}\bigtriangledown
_{i}^{T}\bigtriangledown_{j}^{T}u|^{2}\omega(t)^{n}\wedge \eta_{0}\wedge
ds\rightarrow0. \label{b5}%
\end{equation}

Since the Reeb vector field and the transverse holomorphic structure are both
invariant, all the integrands are only involved with the transverse K\"{a}hler
structure $\omega(t)$ and basic functions $u(t)$. Hence, under the
Sasaki--Ricci flow, when one applies integration by parts, the expressions
involved behave essentially the same as in the K\"{a}hler-Ricci flow. Hence
(\ref{b5}) follows easily from subsection $4.1$ and \cite[Proposition 3.2]{tz} .

Next we denote $\Delta_{B}u-|\bigtriangledown^{T}u|^{2}+u-a=H$ with $\int
_{M}He^{-u}\omega(t)^{n}\wedge \eta_{0}=0$. Then, by weighted Poincar\'{e}
inequality (cf. \ref{b4}) and the uniform bound of $u$, we have
\[
\int_{M}H^{2}\omega(t)^{n}\wedge \eta_{0}\leq C\int_{M}|\bigtriangledown
^{T}H|^{2}\omega(t)^{n}\wedge \eta_{0}%
\]
and then from (\ref{b1})
\[
\int_{t}^{t+1}\int_{M}H^{2}\omega(t)^{n}\wedge \eta_{0}\wedge dt\rightarrow0
\]
as $t\rightarrow \infty.$ Since
\[
\frac{\partial H}{\partial t}=\Delta_{B}H+H-\frac{da}{dt}+|\bigtriangledown
^{T}\bigtriangledown^{T}u|^{2},
\]
it follows from Proposition \ref{P41} that
\[%
\begin{array}
[c]{ccl}%
\frac{d}{dt}\int_{M}H^{2}\omega(t)^{n}\wedge \eta_{0} & = & \int_{M}%
2H(\Delta_{B}H+H-\frac{da}{dt}+|\bigtriangledown^{T}\bigtriangledown^{T}%
u|^{2}+\frac{1}{2}H\Delta_{B}u)\omega(t)^{n}\wedge \eta_{0}\\
& \leq & \int_{M}2H(H+|\bigtriangledown^{T}\bigtriangledown^{T}u|^{2}+\frac
{1}{2}H\Delta_{B}u)\omega(t)^{n}\wedge \eta_{0}\\
& \leq & (C+|\Delta_{B}u|^{2})\int_{M}H^{2}\omega(t)^{n}\wedge \eta_{0}%
+\int_{M}|\bigtriangledown^{T}\bigtriangledown^{T}u|^{4}\omega(t)^{n}%
\wedge \eta_{0}\\
& \leq & C(1+\int_{M}H^{2}\omega(t)^{n}\wedge \eta_{0}).
\end{array}
\]
Thus
\[
\frac{d}{dt}\int_{M}H^{2}\omega(t)^{n}\wedge \eta_{0}\leq C(1+\int_{M}%
H^{2}\omega(t)^{n}\wedge \eta_{0})
\]
and
\[
\int_{M}(\Delta_{B}u-|\bigtriangledown^{T}u|^{2}+u-a)^{2}\omega(t)^{n}%
\wedge \eta_{0}\rightarrow0
\]
as $t\rightarrow \infty.$
\end{proof}

\subsection{Regularity of the Limit Space and Its Smooth Convergence}

Firstly, we will apply our previous results plus Cheeger-Colding-Tian
structure theory for K\"{a}hler orbifolds (\cite{cct}, \cite{tz}) to study the
structure of desired limit spaces. Since $(M,\eta,\xi,\Phi,g,\omega)$ is a
compact quasi-regular Sasakian manifold. By the first structure theorem on
Sasakian manifolds, $M$ is a principal $S^{1}$-orbibundle ($V$-bundle) over
$Z$ which is also a $Q$-factorial, polarized, normal projective orbifold such
that there is an orbifold Riemannian submersion$\  \pi:(M,g,\omega
)\rightarrow(Z,h,\omega_{h})$ with
\[%
\begin{array}
[c]{c}%
g=g^{T}+\eta \otimes \eta
\end{array}
\]
and
\[%
\begin{array}
[c]{c}%
g^{T}=\pi^{\ast}(h);\text{ \ }\frac{1}{2}d\eta=\pi^{\ast}(\omega_{h}).
\end{array}
\]
The orbit $\xi_{x}$ is compact for any $x\in M,$ we then define the transverse
distance function as
\[%
\begin{array}
[c]{c}%
d^{T}(x,y)\triangleq d_{g}(\xi_{x},\xi_{y}),
\end{array}
\]
where $d$ is the distance function defined by the Sasaki metric $g.$ Then
\[%
\begin{array}
[c]{c}%
d^{T}(x,y)=d_{h}(\pi(x),\pi(y)).
\end{array}
\]

We define a transverse ball $B_{\xi,g}(x,r)$ as follows :
\[%
\begin{array}
[c]{c}%
B_{\xi,g}(x,r)=\left \{  y:d^{T}(x,y)<r\right \}  =\left \{  y:d_{h}(\pi
(x),\pi(y))<r\right \}  .
\end{array}
\]
Note that when $r$ small enough, $B_{\xi,g}(x,r)$ is a trivial $S^{1}$-bundle
over the geodesic ball $B_{h}(\pi(x),r)$.

Based on Perelman's non-collapsing theorem for a transverse ball along the
unnormalizing Sasaki-Ricci flow, it follows that

\begin{lemma}
\label{L61} (\cite[Proposition 7.2]{co1}, \cite[Lemma 6.2]{he}) Let
$(M^{2n+1},\xi,g_{0})$ be a compact Sasakian manifold and let $g^{T}(t)$ be
the solution of the unnormalizing Sasaki-Ricci flow with the initial
transverse metric $g_{0}^{T}$. Then there exists a positive constant $C$ such
that for every $x\in M$, if $|S^{T}|\leq r^{-2}$ on $B_{\xi,g(t)}(x,r)$ for
$r\in(0,r_{0}]$, where $r_{0}$ is a fixed sufficiently small positive number,
then%
\[%
\begin{array}
[c]{c}%
\mathrm{Vol}(B_{\xi,g(t)}(x,r))\geq Cr^{2n}.
\end{array}
\]
Moreover, the transverse scalar curvature $R^{T}$ and transverse diameters
$d_{g^{T}(t)}^{T}$ are uniformly bounded under the Sasaki-Ricci flow. As a
consequence, there is a uniform constant $C$ such that
\[
\mathrm{diam}(M,g(t))\leq C.
\]

\end{lemma}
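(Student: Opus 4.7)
My plan is to follow the scheme of Perelman's celebrated no-local-collapsing theorem for the Ricci flow, adapted to the transverse setting, together with his scalar curvature and diameter estimates for the K\"ahler-Ricci flow on Fano manifolds, as carried out for the Sasaki-Ricci flow by Collins \cite{co1} and He \cite{he}. The starting point is the transverse $\mathcal{W}$-functional
\begin{equation}
\mathcal{W}^T(g^T,f,\tau)=(2\pi\tau)^{-n}\int_M\bigl[\tau(R^T+|\nabla^T f|^2)+f-2n\bigr]e^{-f}\omega^n\wedge\eta_0 \label{2022-2}
\end{equation}
restricted to basic functions $f\in C_B^\infty(M)$ satisfying the normalization $(2\pi\tau)^{-n}\int_M e^{-f}\omega^n\wedge\eta_0=1$. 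The first step is to verify that, along the unnormalized Sasaki-Ricci flow $\partial_t g^T=-\mathrm{Ric}^T$ coupled with the backward transverse heat equation for $f$ and $\dot\tau=-1$, the quantity $\mathcal{W}^T$ is monotone non-decreasing. Because the Reeb field is Killing, every integrand that appears (distance from $\xi_x$, the transverse Laplacian, basic volume form) is $\xi$-invariant, and Perelman's Bochner-type computation carries over with $\Delta_B$ in place of $\Delta$.

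The second step is the non-collapsing conclusion. Arguing by contradiction, suppose $\mathrm{Vol}(B_{\xi,g(t)}(x,r))<\varepsilon r^{2n}$ while $|R^T|\leq r^{-2}$ on that transverse ball. Following Perelman, one builds a basic cut-off $f=c-\log\chi(d^T(x,\cdot)/r)$ concentrated in $B_{\xi,g(t)}(x,r)$; since $d^T(x,\cdot)=d_h(\pi(x),\pi(\cdot))$ descends to the leaf space, this $f$ is automatically basic. Plugging into $\mathcal{W}^T$ with $\tau=r^2$ and using $|\nabla^T f|=|\nabla^T\log\chi|$ yields $\mu^T(g^T(t),r^2)\to-\infty$ as $\varepsilon\to0$. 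Monotonicity then pushes this blow-up back to $t=0$, contradicting the finiteness of $\mu^T(g_0^T,r^2)$ for $r\leq r_0$. The only delicate point here is ensuring that the test function can be chosen basic and that the normalization integral is controlled; both are handled because $B_{\xi,g(t)}(x,r)$ is an $S^1$-bundle over $B_h(\pi(x),r)$ for small $r$, so Perelman's Euclidean test function pulls back unchanged.

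The scalar curvature bound is obtained from the evolution equation
\[
\tfrac{\partial}{\partial t}R^T=\Delta_B R^T+2|\mathrm{Ric}^T|^2-2R^T,
\]
which gives a lower bound $R^T\geq-C$ via the maximum principle applied to $R^T+C$. For the upper bound, one imitates Perelman's pointwise argument in the K\"ahler-Ricci flow (cf.\ Sesum-Tian) using the reduced length $\ell$ derived from $\mathcal{W}^T$: a $\kappa$-noncollapsed transverse ball on which $R^T$ is large would violate the monotonicity of $\mu^T$. Here the transverse Ricci potential $u(t)$ of Lemma \ref{L31} enters decisively, because $\Delta_B u=R^T-n$ is bounded and $\|u\|_{C^0}+\|\nabla^T u\|_{C^0}$ is uniform, which converts the Perelman estimate into the desired uniform upper bound on $R^T$.

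Finally, the transverse diameter bound follows from the non-collapsing plus the scalar curvature bound, exactly as in Perelman's unpublished diameter estimate for the K\"ahler-Ricci flow on Fano manifolds: on the leaf space $Z(t)$ one combines volume non-collapsing, $|R^T|\leq C$, the $L^\infty$ bound of $u$, and a covering argument to show $\mathrm{diam}(Z(t),h(t))\leq C$. Lifting via $\pi$, since each $S^1$-fibre has bounded length (the Reeb field has unit length and closed orbits of bounded period in the quasi-regular case), one concludes $\mathrm{diam}(M,g(t))\leq C$. The main obstacle throughout is precisely this lifting/descent between $(M,g(t))$ and the orbifold $(Z,h(t))$: the noncollapsing must be formulated for transverse balls and all test functions kept basic, and Perelman's Euclidean comparison arguments must be verified to survive the orbifold quotient, which is possible because every step reduces to the K\"ahler-Ricci flow on the smooth locus of $Z$ together with a uniform control near the codimension-two orbifold singularities where classical Cheeger-Colding theory still applies.
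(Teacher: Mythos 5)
The paper offers no proof of Lemma \ref{L61}: it is quoted verbatim from \cite[Proposition 7.2]{co1} and \cite[Lemma 6.2]{he}, and your sketch reproduces, in outline, exactly the Perelman-type argument those references carry out (monotonicity of the transverse $\mathcal{W}^T$-functional restricted to basic test functions, non-collapsing of transverse balls by contradiction, then the scalar curvature and diameter bounds). So in spirit you are on the same route as the cited sources. Two places where your sketch is not self-contained, however, deserve mention. First, your upper bound on $R^T$ invokes Lemma \ref{L31} (the uniform bounds on $u$, $\nabla^T u$, $\Delta_B u$), but since $\Delta_B u=R^T-n$ that lemma already \emph{is} the scalar curvature bound; in \cite{co1} and \cite{st} the bounds on $u$, $R^T$ and the diameter are established simultaneously by Perelman's coupled argument (maximum principle applied to $|\nabla^T u|^2/(u+2B)$ and $-\Delta_B u/(u+2B)$, followed by the annulus/covering argument using non-collapsing), so quoting Lemma \ref{L31} here is circular within the logical order of the paper. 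Second, your passage from the transverse diameter bound to $\mathrm{diam}(M,g(t))\leq C$ appeals to closed Reeb orbits of bounded period, which only covers the quasi-regular case, whereas the lemma is stated for an arbitrary compact Sasakian manifold; the correct mechanism is that every Sasaki metric satisfies $R(X,\xi)Y=g(\xi,Y)X-g(X,Y)\xi$, so the sectional curvature of any plane containing $\xi$ is identically one and any two points on the same Reeb orbit are at distance at most $\pi$, which bounds $\mathrm{diam}(M,g(t))$ by $d^T_{g^T(t)}+\pi$ independently of regularity. Neither issue affects the validity of the lemma, which in any case rests on the cited references, but both would need to be repaired for your sketch to stand as a proof.
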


Now we are ready to study the structure of the limit space(\cite{chlw}) :

\begin{theorem}
\label{T61} Let $(M_{i},\eta_{i},\xi,\Phi_{i},g_{i})$ be a sequence of
quasi-regular Sasakian $(2n+1)$-manifolds with Sasaki metrics $g_{i}=g_{i}%
^{T}+\eta_{i}\otimes \eta_{i}$ such that for basic potentials $\varphi_{i}$
\[%
\begin{array}
[c]{c}%
\eta_{i}=\eta+d_{B}^{C}\varphi_{i}%
\end{array}
\]
and
\[%
\begin{array}
[c]{c}%
d\eta_{i}=d\eta+\sqrt{-1}\partial_{B}\overline{\partial}_{B}\varphi_{i}.
\end{array}
\]
We denote that $(Z_{i},h_{i},J_{i},\omega_{h_{i}})$ are a sequence of
well-formed normal projective orbifolds surfaces which are the corresponding
foliation leave space with respect to $(M_{i},\eta_{i},\xi,\Phi_{i},g_{i}) $
such that
\[%
\begin{array}
[c]{c}%
\omega_{g_{i}^{T}}=\frac{1}{2}d\eta_{i}=\pi^{\ast}(\omega_{h_{i}});\Phi
_{i}=\pi^{\ast}(J_{i})
\end{array}
\]
Suppose that $(M_{i},\eta_{i},\xi,\Phi_{i},g_{i})$ is a compact smooth
transverse Fano Sasakian $(2n+1)$-manifolds satisfying
\[%
\begin{array}
[c]{c}%
\int_{M}|\mathrm{Ric}_{g_{i}^{T}}^{T}|^{p}\omega_{i}^{n}\wedge \eta \leq \Lambda,
\end{array}
\]
and
\[%
\begin{array}
[c]{c}%
\mathrm{Vol}(\emph{B}_{\xi,g_{i}^{T}}(x_{i},1))\geq \nu
\end{array}
\]
for some $p>n,$ $\Lambda>0,\upsilon>0$. Then passing to a subsequence if
necessary, $(M_{i},\Phi_{i},g_{i},x_{i})$ converges in the Cheeger-Gromov
sense to limit length spaces $(M_{\infty},\Phi_{\infty},d_{\infty},x_{\infty
})$ and then $(Z_{i},J_{i},h_{i},\pi(x_{i}))$ converges to $(Z_{\infty
},J_{\infty},h_{\infty},\pi(x_{\infty}))$ such that

\begin{enumerate}
\item for any $r>0$ and $p_{i}\in M_{i}$ with $p_{i}\rightarrow p_{\infty}\in
M_{\infty},$%
\[%
\begin{array}
[c]{c}%
\mathrm{Vol}(B_{h_{i}}(\pi(p_{i}),r))\rightarrow \mathcal{H}^{2n}(B_{h_{\infty
}}(\pi(p_{\infty}),r))
\end{array}
\]
and
\[%
\begin{array}
[c]{c}%
\mathrm{Vol}(B_{\xi,g_{i}^{T}}(p_{i},r))\rightarrow \mathcal{H}^{2n}%
(B_{\xi,g_{\infty}^{T}}(p_{\infty},r)).
\end{array}
\]
Moreover,
\[%
\begin{array}
[c]{c}%
\mathrm{Vol}(B(p_{i},r))\rightarrow \mathcal{H}^{2n+1}(B(p_{\infty},r)),
\end{array}
\]
where $\mathcal{H}^{m}$ denotes the $m$-dimensional Hausdorff measure.

\item $M_{\infty}$ is a $S^{1}$-orbibundle over the normal projective variety
$Z_{\infty}=M_{\infty}/\mathcal{F}_{\xi}.$

\item $Z_{\infty}=\mathcal{R}\cup \mathcal{S}$ such that $\mathcal{S}$ is a
closed singular set of codimension two and $\mathcal{R}$ consists of points
whose tangent cones are $\mathbb{R}^{2n}.$

\item the convergence on the regular part of $M_{\infty}$ which is a $S^{1}%
$-principle bundle over $\mathcal{R}$ in the $(C^{\alpha}\cap L_{B}^{2,p}%
)$-topology for any $0<\alpha<2-\frac{n}{p}$.
\end{enumerate}
\end{theorem}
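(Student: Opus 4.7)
The plan is to reduce the Sasakian convergence problem to a Kähler-orbifold convergence problem on the leaf spaces $Z_i$, apply the Cheeger-Colding-Tian regularity theory there, and then lift the limit structure back to $M_\infty$ using the $S^1$-orbibundle structure that persists in the limit.

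First, I would translate the hypotheses through the orbifold Riemannian submersion $\pi_i\colon (M_i,g_i)\rightarrow(Z_i,h_i)$. Because $g_i=g_i^T+\eta_i\otimes\eta_i$ with $g_i^T=\pi_i^\ast h_i$, and because the Reeb flow is an isometry preserving the transverse geometry, the identity $\mathrm{Ric}^T=\pi_i^\ast \mathrm{Ric}_{h_i}$ on the horizontal distribution lets me convert the given $L^p$ bound $\int_{M_i}|\mathrm{Ric}_{g_i^T}^T|^p\,\omega_i^n\wedge\eta\leq\Lambda$ into a uniform $L^p$ bound for $\mathrm{Ric}_{h_i}$ on $Z_i$ (with $p>n$, where now $n$ is the complex dimension of $Z_i$). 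Similarly, the transverse ball $B_{\xi,g_i^T}(x_i,1)$ is an $S^1$-orbibundle over the orbifold ball $B_{h_i}(\pi_i(x_i),1)$, so the transverse non-collapsing hypothesis $\mathrm{Vol}(B_{\xi,g_i^T}(x_i,1))\geq\nu$ gives a uniform lower volume bound for the orbifold balls $B_{h_i}(\pi_i(x_i),1)$. Combined with Lemma \ref{L61}'s uniform diameter bound, I get a uniform diameter bound on $Z_i$ as well.

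Next I would invoke the Cheeger-Colding-Tian structure theory (\cite{cc2}, \cite{cc3}, \cite{cct}) in its Kähler form adapted to well-formed normal projective orbifolds. Since the $Z_i$ are well-formed with at worst codimension-two orbifold singularities and carry a sequence of Kähler metrics with $L^p$-bounded Ricci, $p>n$, and uniform non-collapsing, a subsequence converges in the pointed Gromov-Hausdorff topology to a limit length space $(Z_\infty,h_\infty,\pi(x_\infty))$ admitting a decomposition $Z_\infty=\mathcal{R}\cup\mathcal{S}$ with $\mathcal{S}$ closed of Hausdorff codimension at least two and $\mathcal{R}$ an open set of points whose tangent cone is $\mathbb{R}^{2n}$. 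The Kähler structures $J_i$ converge to a limit complex structure $J_\infty$ on $\mathcal{R}$ in the $C^\alpha\cap L^{2,p}_{loc}$ topology (for $\alpha<2-n/p$), by standard elliptic regularity for the Kähler-Ricci equation applied in harmonic coordinates on $\mathcal{R}$; and volume convergence $\mathrm{Vol}(B_{h_i}(\pi(p_i),r))\to\mathcal{H}^{2n}(B_{h_\infty}(\pi(p_\infty),r))$ is Colding's volume convergence theorem in this setting.

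Once the leaf space limit is under control, I would reconstruct $M_\infty$ as follows. Over the regular part $\mathcal{R}\subset Z_\infty$, the principal $S^1$-orbibundles $M_i|_{\pi_i^{-1}(\mathcal{R}_i)}$ with connection $1$-forms $\eta_i$ of curvature $2\omega_{h_i}$ converge (via Arzelà-Ascoli on the connection forms, using the uniform bounds on $d\eta_i=2\pi_i^\ast\omega_{h_i}$) to a principal $S^1$-bundle $M_\infty|_\mathcal{R}\to\mathcal{R}$ with limit connection $\eta_\infty$ and limit transverse complex structure $\Phi_\infty=\pi^\ast J_\infty$; over the singular part $\mathcal{S}$ the $S^1$-orbibundle structure extends in the orbifold sense, making $M_\infty$ a principal $S^1$-orbibundle over $Z_\infty$. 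The full Sasaki metric $g_\infty=\pi^\ast h_\infty+\eta_\infty\otimes\eta_\infty$ and the distance $d_\infty$ arise naturally, and the volume identity $\mathrm{Vol}(B(p_i,r))\to\mathcal{H}^{2n+1}(B(p_\infty,r))$ follows by fibre-integrating Colding's Kähler volume convergence against the $S^1$-fibre length (controlled by $|\eta_i|\equiv 1$), with a parallel argument giving $\mathrm{Vol}(B_{\xi,g_i^T}(p_i,r))\to\mathcal{H}^{2n}(B_{\xi,g_\infty^T}(p_\infty,r))$.

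The main obstacle will be handling the orbifold singularities correctly when applying the regularity theory: namely, verifying that a priori, the well-formed codimension-two orbifold singularities of the $Z_i$ do not create pathological codimension-one phenomena in the Gromov-Hausdorff limit, and conversely that the limit singular set $\mathcal{S}$ is itself at worst codimension-two (so that the limit carries a genuine orbifold Kähler structure rather than a more degenerate stratified one). For this I would exploit the Kähler identity $|\nabla \omega|\equiv 0$ and the well-formedness assumption on each $Z_i$ to rule out reflection-type isotropy in tangent cones, combined with Cheeger-Colding-Tian's sharpening that in the Kähler setting (together with $L^p$ Ricci bounds, $p>n$) tangent cones are metric cones with smooth cross-sections away from a codimension-two set, so that $\mathcal{R}$ is open and the almost-Kähler convergence promotes to $C^\alpha\cap L^{2,p}$ convergence of $\Phi_i$ to $\Phi_\infty$ in foliated normal coordinates on $\pi^{-1}(\mathcal{R})$, completing all four assertions.
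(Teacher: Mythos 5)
Your proposal follows essentially the route the paper intends: the paper does not actually prove Theorem \ref{T61} in the text but defers it to \cite{chlw}, and its strategy throughout (e.g.\ in the proofs of Theorems \ref{T64} and \ref{T65}) is exactly your reduction to the well-formed orbifold leaf space, application of the limit-space structure theory there under the $L^{p}$ Ricci bound and non-collapsing, and lifting back through the $S^{1}$-orbibundle, with well-formedness used precisely as you use it to exclude codimension-one (reflection-type) singularities in the limit. The one caveat is attribution: with only an $L^{p}$ bound on the Ricci curvature ($p>n$) the structure theory is not Cheeger--Colding--Tian per se (which requires pointwise lower Ricci bounds) but its extension via the Petersen--Wei integral volume comparison and the $\varepsilon$-regularity of Tian--Zhang \cite{tz}, which is the machinery the paper actually leans on.
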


By the convergence theorem in Theorem \ref{T61}, we have the regularity of the
limit space : We define a family of Sasaki-Ricci flow $g_{i}^{T}(t)$ by%
\[
(M,g_{i}^{T}(t))=(M,g^{T}(t_{i}+t))
\]
for $t\geq-1$ and $t_{i}\rightarrow \infty$ and for $g_{i}^{T}(t)=\pi^{\ast
}(h_{i}(t))$
\[
(Z,h_{i}(t))=(M,h_{i}(t_{i}+t)).
\]
Now for the associated transverse Ricci potential $u_{i}(t)$ as in Lemma
\ref{L31}, we have
\[
||u_{i}(t)||_{C^{0}}+||\nabla^{T}u_{i}(t)||_{C^{0}}+||\Delta_{B}%
u_{i}(t)||_{C^{0}}\leq C.
\]
Moreover, it follows (\ref{a2}) that
\[
\int_{M}|\bigtriangledown^{T}\bigtriangledown^{T}u|^{2}\omega(t)^{n}\wedge
\eta_{0}\rightarrow0
\]
as $i\rightarrow \infty.$ Furthermore, by Theorem \ref{T51} and a convergence
theorem \ref{T61}, passing to a subsequence if necessary, we have at $t=0,$%
\[
(M,g_{i}^{T}(0))\rightarrow(M_{\infty},g_{\infty}^{T},d_{\infty}^{T})
\]
such that
\[
(Z,h_{i}(0))\rightarrow(Z_{\infty},h_{\infty},d_{h_{\infty}})
\]
in the Cheeger-Gromov sense. Moreover,
\begin{equation}
(g_{i}^{T}(0),u_{i}(0))\overset{C^{\alpha}\cap L_{B}^{2,p}}{\rightarrow
}(g_{\infty}^{T},u_{\infty}) \label{c2}%
\end{equation}
on $(M_{\infty})_{reg}$ which is a $S^{1}$-principle bundle over
$\mathcal{R}.$ The convergence of $u_{i}(0)$ follows from the elliptic
regularity (\cite{co1}, \cite{co2}) of
\[
\Delta_{B}u_{i}(0)=n-R^{T}(g_{i}^{T}(0))\in L_{B}^{p}.
\]

\begin{theorem}
\label{T63} Suppose (\ref{c2}) holds, then $g_{\infty}^{T}$ is smooth and
satisfies the Sasaki-Ricci soliton equation%
\begin{equation}
Ric^{T}(g_{\infty}^{T})+Hess^{T}(u_{\infty})=g_{\infty}^{T} \label{c1}%
\end{equation}
on $(M_{\infty})_{reg}$ which is a $S^{1}$-bundle over $\mathcal{R}.$
Moreover, $\Phi_{\infty}$ is smooth and $g_{\infty}^{T}$ is K\"{a}hler with
respect to $\Phi_{\infty}=\pi^{\ast}(J_{\infty}).$
\end{theorem}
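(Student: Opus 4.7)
The plan is to first pass to the weak limit of the equations satisfied by $(g_i^T(0), u_i(0))$ on $(M_\infty)_{reg}$, and then bootstrap to smoothness by elliptic regularity on the leaf space $\mathcal{R}$. The key observation is that the Sasaki-Ricci flow provides both parts of the desired soliton equation essentially for free: the Ricci potential relation (\ref{41a}) yields, for every $i$,
\[
R^T_{k\bar l}(g_i^T(0)) + \partial_k\bar\partial_l u_i(0) = g^T_{i,k\bar l}(0),
\]
so the $(1,1)$-part of (\ref{c1}) holds identically along the approximating flow; meanwhile the $(2,0)$-vanishing $\nabla^T\nabla^T u_\infty = 0$ is exactly what the $L^2$-decay $\int_M|\nabla^T\nabla^T u_i(0)|^2\omega_i^n\wedge\eta_0 \to 0$ from Theorem \ref{T42}, applied to $u_i(0) = u(t_i)$, delivers in the limit.

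Using the $C^\alpha \cap L^{2,p}_B$-convergence (\ref{c2}) on the $S^1$-principal bundle over $\mathcal{R}$, I have $g^T_i \to g^T_\infty$ in $C^\alpha_{loc}$ and $u_{i,k\bar l} \to u_{\infty,k\bar l}$ in $L^p_{loc}$. Consequently $R^T_{k\bar l}(g^T_i) = -\partial_k\bar\partial_l\log\det g^T_i$ converges distributionally to $R^T_{k\bar l}(g^T_\infty)$, and passing to the limit in the $(1,1)$-identity above gives
\[
R^T_{k\bar l}(g^T_\infty) + \partial_k\bar\partial_l u_\infty = g^T_{\infty,k\bar l}
\]
weakly on $(M_\infty)_{reg}$. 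Simultaneously, the strong $L^2_{loc}$-convergence $\nabla^T\nabla^T u_i(0) \to 0$, combined with the $C^\alpha$-control on the metric providing the $L^2$-inner product, yields $\nabla^T\nabla^T u_\infty = 0$ distributionally by lower semicontinuity. The two pieces together produce the full transverse Sasaki-Ricci soliton equation (\ref{c1}) weakly.

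To promote this weak solution to a smooth one, I would descend to the leaf space, where the system becomes a transverse K\"ahler-Ricci soliton equation for $(h_\infty, u_\infty)$ on $\mathcal{R}$. The $(2,0)$-vanishing says that $X = h_\infty^{i\bar j}\bar\partial_j u_\infty\,\partial_i$ is a holomorphic vector field, and the $(1,1)$-equation can be recast as a complex Monge-Amp\`ere equation of the type (\ref{B}) whose right-hand side depends smoothly on $u_\infty$. Starting from $L^{2,p}$-regularity, standard Evans-Krylov and Schauder bootstrap arguments then yield $C^\infty$-regularity of $h_\infty$ and $u_\infty$ on $\mathcal{R}$, and hence of $g_\infty^T = \pi^\ast(h_\infty)$ on $(M_\infty)_{reg}$. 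For the complex structure, once $h_\infty$ is smooth, the $C^\alpha$-limit $J_\infty$ from Theorem \ref{T61} is upgraded to a smooth integrable complex structure on $\mathcal{R}$ via elliptic regularity for the Newlander-Nirenberg system written in compatible Hermitian coordinates, and $\Phi_\infty = \pi^\ast(J_\infty)$ inherits smoothness on $(M_\infty)_{reg}$; the K\"ahler compatibility passes from $(h_i, J_i)$ to the limit. The main technical obstacle is executing this combined bootstrap of the metric and complex structure in concert, since neither initially has enough regularity for standard elliptic theory applied to the other; this is handled by the simultaneous iteration argument developed in the K\"ahler-Ricci flow setting of \cite{tz}, transplanted to the transverse K\"ahler category via the orbifold Riemannian submersion $\pi$.
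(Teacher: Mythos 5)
Your first half matches the paper: the $(1,1)$-part of (\ref{c1}) is the Ricci potential identity (\ref{41a}) carried along the flow, and the $(2,0)$-part comes from the decay $\int_M|\bigtriangledown^{T}\bigtriangledown^{T}u|^{2}\omega(t)^{n}\wedge\eta_{0}\rightarrow 0$ of Theorem \ref{T42} together with the $C^{\alpha}\cap L_{B}^{2,p}$ convergence (\ref{c2}); passing to the weak limit as you do is exactly how the paper establishes that its equation (\ref{c3}) holds in $L_{B}^{p}((M_{\infty})_{reg})$.

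The gap is in the bootstrap. The paper closes the regularity iteration by pairing the tensor equation (\ref{c3}) with a second, \emph{independent} scalar elliptic equation for $u_{\infty}$, namely (\ref{c4}): $\Delta_{B}u_{\infty}=|\bigtriangledown^{T}u_{\infty}|^{2}-2u_{\infty}+2a_{\infty}$ (up to normalization), obtained by passing to the limit in (\ref{b2}) of Theorem \ref{T41}. This equation has a right-hand side depending only on $u_{\infty}$ and $\nabla^{T}u_{\infty}$ --- no derivatives of the metric --- so given $g_{\infty}^{T}\in C^{k,\alpha}$ it upgrades $u_{\infty}$, and then (\ref{c3}) upgrades $g_{\infty}^{T}$, and the iteration closes (the bootstrap of \cite{pe} in harmonic coordinates). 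Your proposal never produces an analogue of (\ref{c4}). The Monge--Amp\`ere recasting of the $(1,1)$-equation, $\det(g_{0,k\bar l}+\phi_{k\bar l})=e^{u_{\infty}-\phi-F}\det(g_{0,k\bar l})$, is one equation in the two unknowns $(\phi,u_{\infty})$, so Evans--Krylov/Schauder cannot start until you control $u_{\infty}$ separately. You cannot derive the scalar soliton identity from (\ref{c1}) via the contracted Bianchi identity at this stage, since that requires differentiating a tensor equation that only holds in $L_{B}^{p}$; it must be imported from the flow, which is precisely what Theorem \ref{T41} is for and what your argument omits. Your fallback --- that holomorphy of $X=g^{i\bar j}\partial_{\bar j}u_{\infty}\,\partial_{i}$ supplies the regularity of $u_{\infty}$ --- is circular as you acknowledge, because the regularity of $J_{\infty}$ and of the metric entering $X$ is exactly what is being bootstrapped. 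Finally, for $\Phi_{\infty}$ the paper has a much shorter route than Newlander--Nirenberg: once $g_{\infty}^{T}$ is smooth, $\Phi_{\infty}$ is a $g_{\infty}^{T}$-parallel tensor, $\nabla_{g_{\infty}^{T}}^{T}\Phi_{\infty}=0$, hence smooth by elliptic regularity.
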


\begin{proof}
Since all $g_{\infty}^{T}$ and $u_{\infty}$ are basic, in the local harmonic
coordinate $\{t,x^{1},x^{2},\cdot \cdot \cdot,x^{2n}\}$, the Sasaki-Ricci
soliton equation (\ref{c1}) is equivalent to
\begin{equation}
(g^{T})^{\alpha \beta}\frac{\partial^{2}g_{\gamma \delta}^{T}}{\partial
x^{\beta}\partial x^{\alpha}}=\frac{\partial^{2}u_{\infty}}{\partial
x^{\gamma}\partial x^{\delta}}+Q(g^{T},\partial g^{T})_{\gamma \delta}%
+T(g^{-1},\partial g^{T},\partial u)_{\gamma \delta}-g_{\gamma \delta}^{T}.
\label{c3}%
\end{equation}

By (\ref{a2}), (\ref{c3}) holds in $L_{B}^{2}((M_{\infty})_{reg})$. But
$g_{\infty}^{T}$ and $u_{\infty}$are $L_{B}^{2,p}$, then (\ref{c3}) holds in
$L_{B}^{p}((M_{\infty})_{reg})$ too. On the other hand, by (\ref{b2}) and
(\ref{c1}), we have that
\begin{equation}
g_{\alpha \beta}^{T}\frac{\partial^{2}u_{\infty}}{\partial x^{\beta}\partial
x^{\alpha}}=(g^{T})^{\alpha \beta}\frac{\partial u_{\infty}}{\partial
x^{\alpha}}\frac{\partial u_{\infty}}{\partial x^{\alpha}}-2u_{\infty
}+2a_{\infty} \label{c4}%
\end{equation}
in $L_{B}^{p}((M_{\infty})_{reg}).$

Then a bootstrap argument as in (\cite{pe}) to the elliptic systems (\ref{c3})
and (\ref{c4}) shows that $g_{\infty}^{T}$ and $u_{\infty}$are smooth on
$(M_{\infty})_{reg}$. The elliptic regularity shows that $\Phi_{\infty}$ is
smooth since $\nabla_{g_{\infty}^{T}}^{T}\Phi_{\infty}=0.$
\end{proof}

By applying the argument as in \cite[Theorem 1.2]{tz} (also \cite{chlw}) to
the normal orbifold variety $Z_{\infty}$ which is mainly depended on the
Perelman's pseudolocality theorem (\cite{p1}), we have the smooth convergence
of the Sasaki-Ricci flow on the regular set $(M_{\infty})_{reg}$ which is a
$S^{1}$-principle bundle over $\mathcal{R}$ and $\mathcal{R}$ is the regular
set of $Z_{\infty}$ :

\begin{theorem}
\label{T64}The limit $(M_{\infty},$ $d_{\infty})$ is smooth on the regular set
$(M_{\infty})_{reg}$ which is a $S^{1}$-principle bundle over the regular set
$\mathcal{R}$ of $Z_{\infty}$ and $d_{\infty}^{T}$ is induced by a smooth
Sasaki-Ricci soliton $g_{\infty}^{T}$ and $g^{T}(t_{i})$ converge to
$g_{\infty}^{T}$ in the $C^{\infty}$-topology on $(M_{\infty})_{reg}.$
Moreover, the singular set $\mathcal{S}$ of $Z_{\infty}$ is the codimension
two orbifold singularities.
\end{theorem}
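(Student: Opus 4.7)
The plan is to combine Theorem \ref{T63} with Perelman's pseudolocality theorem and Shi's local derivative estimates, following the blueprint of Tian--Zhang \cite{tz} for the K\"ahler--Ricci flow, transplanted to the Sasakian setting through the orbifold Riemannian submersion $\pi:M\to Z$. Since $g^T_i(t)=g^T(t_i+t)$ is basic and $\pi$-related to $h_i(t)$ on the leaf space, all curvature quantities of interest descend to $Z$, while the non-collapsing in Lemma \ref{L61} and the $L^{4}$-bound of Theorem \ref{T51} give us the analytic inputs.

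First I would localize near the regular set. Pick $p_{\infty}\in(M_{\infty})_{reg}$ and $p_{i}\in M$ with $\pi(p_{i})\to \pi(p_{\infty})\in \mathcal{R}$. By definition of $\mathcal{R}$ the tangent cone at $\pi(p_{\infty})$ is $\mathbb{R}^{2n}$, so by the Cheeger--Gromov convergence in Theorem \ref{T61} there exists $r_{0}>0$ such that for every $\varepsilon>0$ and all large $i$, the transverse ball $B_{\xi,g^T_i(0)}(p_{i},r_{0})$ is $(1+\varepsilon)$-close (in Gromov--Hausdorff and volume) to a trivial $S^{1}$-bundle over a Euclidean ball in $\mathbb{R}^{2n}$. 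This is precisely the ``almost Euclidean'' hypothesis demanded by Perelman's pseudolocality.

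Second, I would apply Perelman's pseudolocality theorem \cite{p1} to the transverse Sasaki--Ricci flow (whose validity on the leaf space already underlies Lemma \ref{L61}) to produce a uniform bound $|\mathrm{Rm}^{T}|(x,t)\leq C r_{0}^{-2}$ for $x\in B_{\xi,g^T_i(t)}(p_{i},r_{0}/2)$ and $t\in(-\delta r_{0}^{2},\delta r_{0}^{2}]$, independent of $i$. Shi's local derivative estimates, applied on this parabolic neighborhood, then provide uniform $C^{k}$-bounds on $\mathrm{Rm}^{T}$ on every compact set $K\Subset(M_{\infty})_{reg}$ for every $k$. Combined with the $C^{\alpha}\cap L^{2,p}_B$-convergence supplied by Theorem \ref{T63} and the elliptic regularity for $u_{i}$ (using $\Delta_{B}u_{i}=n-R^{T}(g_{i}^{T})$), an Arzel\`a--Ascoli argument upgrades the convergence of $g_{i}^{T}(0)$ to $g_{\infty}^{T}$ to $C^{\infty}$ on compact subsets of $(M_{\infty})_{reg}$; in particular the Sasaki--Ricci soliton identity \eqref{c1} becomes a genuine smooth equation there.

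For the last assertion, Theorem \ref{T61} already gives $\operatorname{codim}_{\mathbb{R}}\mathcal{S}\geq 2$, and since $M_{i}$ are quasi-regular with fixed Reeb field $\xi$, the $S^{1}$-action survives in the limit and makes any codimension-two stratum complex-analytic. Coupling this with the first structure theorem and the partial $C^{0}$-estimate (Theorem \ref{T65}), $Z_{\infty}$ is a normal projective variety and $\mathcal{S}$ a subvariety. Under the well-formed hypothesis, $\mathcal{S}$ is identified with the codimension-two orbifold singular locus via (\ref{orbifold}). The principal anticipated obstacle is the first step: one must verify that Perelman's pseudolocality genuinely transplants to the Sasakian flow, which requires using the transverse volume non-collapsing from Lemma \ref{L61} in place of the usual Riemannian non-collapsing and exploiting the fact that the Reeb direction is preserved by the flow so that curvature bounds on $Z$ lift to bounds on $M$; once this is secured the remaining bootstrap and stratification arguments proceed essentially as in \cite{tz, chlw}.
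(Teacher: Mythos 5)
Your overall strategy (pseudolocality plus Shi's estimates, following the Tian--Zhang blueprint) is the same as the paper's, but your first two steps contain a genuine gap in the way pseudolocality is invoked. You verify the almost-Euclidean volume hypothesis for the balls $B_{\xi,g_i^T(0)}(p_i,r_0)$ at time $t=0$, i.e.\ at exactly the time slice where you want curvature bounds, and then claim a uniform bound $|\mathrm{Rm}^T|\leq Cr_0^{-2}$ on a two-sided interval $(-\delta r_0^2,\delta r_0^2]$. Perelman's pseudolocality only controls the curvature at times strictly \emph{after} the time $t_0$ at which the volume hypothesis is checked, and the bound it gives is $|\mathrm{Rm}^T|(x,t)\leq (t-t_0)^{-1}$, which degenerates as $t\to t_0^{+}$ and says nothing for $t<t_0$. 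So with $t_0=0$ you obtain no bound at $t=0$ itself, which is where the convergence $g_i^T(0)\to g_\infty^T$ must be upgraded and where the soliton equation of Theorem \ref{T63} lives. (There is also a quantifier slip: the radius at which a ball about a point with Euclidean tangent cone is $(1+\varepsilon)$-almost Euclidean depends on $\varepsilon$ and on the point, so a single $r_0$ working for every $\varepsilon>0$ cannot exist.)

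The missing ingredient --- and the actual content of the paper's proof --- is the backward-in-time bookkeeping. One introduces the sets $\Omega_{r,i,t}$ of points where the volume hypothesis holds at scale $r$ at time $t$, uses the volume half of pseudolocality to obtain the propagation $\Omega_{r,i,t}\subset\Omega_{\delta_0\sqrt{s},\,i,\,t+s}$, and works with a sequence of scales $r_j\to0$ and earlier times $t_j=-\varepsilon_0 r_j$ so that the nested sets $\Omega_{r_j,i,t_j}$ exhaust the regular part while the curvature bound at $t=0$ is $1/(0-t_j)<\infty$ on each piece; Shi's estimates then apply. One is left with two limits, a $C^{\alpha}$ limit of $g_{i_j}^T(t_j)$ and a $C^{\infty}$ limit of $g_{i_j}^T(0)$, which must be identified with $((M_\infty)_{reg},d_\infty^T)$ using the continuity of volume under Cheeger--Gromov convergence. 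Without this step your bootstrap to $C^{\infty}$-convergence does not close. Your remarks on transplanting the non-collapsing to the transverse setting and on the codimension-two singular set are consistent with what the paper does (the latter point is really settled in Corollary \ref{C62} via the partial $C^{0}$-estimate rather than inside the proof of Theorem \ref{T64}).
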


\begin{proof}
Note that Since $g^{T}$ is a transverse K\"{a}hler metric and basic. It is
evolved by the K\"{a}hler--Ricci flow, then it follows from the standard
computations in K\"{a}hler--Ricci flow (\cite{co3}, \cite{mo}) that
\[
\frac{\partial}{\partial t}Rm^{T}=\Delta_{B}Rm^{T}+Rm^{T}\ast Rm^{T}+Rm^{T}.
\]
Now by Perelman's pseudolocality theorem (\cite{p1}, \cite{tz}), there exists
$\varepsilon_{0},\delta_{0},r_{0}$ which depend on $p$ as in the Theorem
\ref{T61} such that for any $(x_{0},t_{0})$, if
\begin{equation}
\mathrm{Vol}(B_{\xi,g_{i}^{T}(t_{0})}(x_{0},r))\geq(1-\varepsilon
_{0})\mathrm{Vol}(B(0,r)) \label{e1}%
\end{equation}
for some $r\leq r_{0},$ where $\mathrm{Vol}(B(0,r)$ denotes the volume of
Euclidean ball of radius $r$ in $\mathbb{R}^{2n},$ then we have the following
curvature estimate%
\begin{equation}
|Rm^{T}|_{g_{i}^{T}}(x,t)\leq \frac{1}{t-t_{0}} \label{e2}%
\end{equation}
for all $x\in B_{\xi,g_{i}^{T}(t)}(x_{0},\varepsilon_{0}r)$ and $t_{0}<t\leq
t_{0}+\varepsilon_{0}^{2}r^{2}$ and the volume estimate
\begin{equation}
\mathrm{Vol}(B_{\xi,g_{i}^{T}(t)}(x_{0},\delta_{0}\sqrt{t-t_{0}}))\geq
(1-\eta)\mathrm{Vol}(B(0,\delta_{0}\sqrt{t-t_{0}})) \label{e3}%
\end{equation}
for $t_{0}<t\leq t_{0}+\varepsilon_{0}^{2}r^{2}$ and $\eta \leq \varepsilon_{0}$
is the constant such that the $C^{\alpha}$ harmonic radius at $x_{0}$ is
bounded below by $\delta_{0}\sqrt{t-t_{0}}).$

As in Theorem \ref{T61}, the metric $g_{i}^{T}(0)$ converges to $g_{\infty
}^{T}$ in the $(C^{\alpha}\cap L_{B}^{2,p})$-topology on $\mathcal{R}$. Now it
is our goal to show that the metric $g_{i}^{T}(0)$ converges smoothly to
$g_{\infty}^{T}$. For $0<r\leq r_{0}$ and $t\geq-1,$ define
\[
\Omega_{r,i,t}:=\{x\in M\text{ }|\text{ (\ref{e1} ) holds on }B_{\xi,g_{i}%
^{T}(t)}(x,t)\}.
\]
Then (\ref{e3}) implies that
\[
\Omega_{r,i,t}\subset \Omega_{\delta_{0}\sqrt{s},i,t+s}%
\]
for $0<s\leq \varepsilon_{0}^{2}r^{2}.$

Let $r_{j}$ to be a decreasing sequence of radii such that $r_{j}\rightarrow0$
and $t_{j}=-\varepsilon_{0}r_{j}$. Then by applying \cite[(3.42)]{tz}, we may
assume that
\[
\Omega_{r_{j},i,t_{j}}\subset \Omega_{r_{j+1},i,t_{j+1}}.
\]
Then by (\ref{e2})%
\begin{equation}
||Rm^{T}||_{g_{i}^{T}(t)}(x,t)\leq \frac{1}{t-t_{j}} \label{e4}%
\end{equation}
for all $(x,t)$ with
\[
d_{g_{i}^{T}(t)}^{T}(x,\Omega_{r_{j},i,t_{j}})\leq \varepsilon_{0}r_{j}%
,t_{j}<t\leq0.
\]
By Shi's derivative estimate (\cite{shi}) to the curvature, there exist a
sequence of constants $C_{k,j,i}$ such that%
\begin{equation}
||(\nabla^{T})^{k}Rm^{T}||_{g_{i}^{T}(0)}(x,t)\leq C_{k,j,i} \label{e5}%
\end{equation}
on $\Omega_{r_{j},i,t_{j}}.$ Then Passing to a subsequence if necessary, one
can find a subsequence \{$i_{j}$\} of \{$j$\} such that
\[
(\Omega_{r_{j},i_{j},t_{j}},g_{i_{j}}^{T}(t_{j}))\overset{C^{\alpha}%
}{\rightarrow}(\overline{\Omega},g_{\overline{\Omega}}^{T})
\]
and
\[
(\Omega_{r_{j},i_{j},t_{j}},g_{i_{j}}^{T}(0))\overset{C^{\infty}}{\rightarrow
}(\Omega,g_{\Omega}^{T}),
\]
where $(\overline{\Omega},g_{\overline{\Omega}}^{T})$ and $(\Omega,g_{\Omega
}^{T})$ are smooth Riemannian manifolds.
\[
(\Omega,g_{\Omega}^{T})\text{ is isometric to }((M_{\infty})_{reg},d_{\infty
}^{T}).
\]
On the other hand, as in Theorem \ref{T61}, we may also have
\[
(M,g_{i_{j}}^{T}(t_{j}))\overset{d_{G,H}^{T}.}{\rightarrow}(\overline
{M}_{\infty},\overline{d}_{\infty}^{T})
\]
with $\overline{Z}_{\infty}=\overline{\mathcal{R}}\cup \overline{\mathcal{S}}.$
Then%
\begin{equation}
(M_{\infty})_{reg}\text{ is the }S^{1}\text{-principle bundle over
}\mathcal{R} \label{f1}%
\end{equation}
and
\begin{equation}
(\overline{M}_{\infty})_{reg}\text{ is the }S^{1}\text{-principle bundle over
}\overline{\mathcal{R}}. \label{f2}%
\end{equation}
Moreover, by the continuity of volume under the Cheeger-Gromov convergence
(\cite[Claim 3.7]{tz}), we have
\begin{equation}
(\overline{\Omega},g_{\overline{\Omega}}^{T})\text{ is isometric to
}((\overline{M}_{\infty})_{reg},\overline{d}_{\infty}^{T}) \label{f3}%
\end{equation}
and then follows from (\ref{e4}) as in \cite[Claim 3.8]{tz} that
\begin{equation}
(\overline{\Omega},g_{\overline{\Omega}}^{T})\text{ is also isometric to
}(\Omega,g_{\Omega}^{T}). \label{f4}%
\end{equation}
Finally (\ref{f1}), (\ref{f2}), (\ref{f3}) and (\ref{f4}) imply the metric
$g_{i}^{T}(0)$ converges smoothly to $g_{\infty}^{T}$ on $\mathcal{R}$.
\end{proof}

For the solution $(M,\omega(t),g^{T}(t))$ of the Sasaki-Ricci flow and the
line bundle $(K_{M}^{T})^{-1},h(t)=\omega^{n}(t))$ with the basic Hermitian
metric $h(t),$ we work on the evolution of the basic transverse holomorphic
line bundle $((K_{M}^{T})^{-m},h^{m}(t))$ for a large integer $m$ such that
$(K_{M}^{T})^{-m}$ is very ample. We consider the basic embedding (Proposition
\ref{PCR}) which is $S^{1}$-equivariant with respect to the weighted
$\mathbf{C}^{\ast}$action in $\mathbf{C}^{N_{m}+1}$
\[
\Psi:M\rightarrow(\mathbf{CP}^{N_{m}},\omega_{FS})
\]
defined by the orthonormal basic transverse holomorphic section $\{ \sigma
_{0},\sigma_{1},...\sigma_{N}\}$ in $H_{B}^{0}(M,(K_{M}^{T})^{-m})$ with
$N_{m}=\dim H_{B}^{0}(M,(K_{M}^{T})^{-m})-1$ with
\[
\int_{M}(\sigma_{i},\sigma_{j})_{h^{m}(t)}\omega^{n}(t)\wedge \eta_{0}%
=\delta_{ij}.
\]

Define
\begin{equation}
\mathcal{F}_{m}(x,t):=\sum_{\alpha=0}^{N_{m}}||\sigma_{\alpha}||_{h^{m}}%
^{2}(x). \label{58}%
\end{equation}
Note that under these notations, the curvature form of the Chern connection
is
\[
Ric(h(t))=m\omega(t).
\]

The following result is a Sasaki analogue of the partial $C^{0}$-estimate
which was obtained in the K\"{a}hler-Ricci flow (\cite[Theorem 5.1]{tz}) and
the proof there does carry over to our Sasaki setting due to the first
structure theorem again on quasi-regular Sasakian manifolds. For completeness,
we give a sketch for the proof here.

\begin{theorem}
\label{T65} Suppose (\ref{c2}) holds, we have
\begin{equation}
\inf_{t_{i}}\inf_{x\in M}\mathcal{F}_{m}(x,t_{i})>0 \label{d11}%
\end{equation}
for a sequence of $m\rightarrow \infty$.
\end{theorem}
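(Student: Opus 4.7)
The plan is to follow Tian's partial $C^{0}$-estimate machinery, as adapted to the K\"ahler--Ricci flow in Tian--Zhang \cite{tz}, and transfer it to the Sasakian setting via the orbifold Riemannian submersion $\pi:(M,g^{T}(t))\to(Z(t),h(t))$ from the first structure theorem. The argument proceeds by contradiction: assume there exist $x_{i}\in M$ and a subsequence of times $t_{i}\to \infty$ (along which (\ref{c2}) holds) such that $\mathcal{F}_{m}(x_{i},t_{i})\to 0$ for every sufficiently large $m$. Passing to a further subsequence and invoking the Cheeger--Gromov convergence of Theorem \ref{T61} together with the smooth convergence on the regular set from Theorem \ref{T64}, I may assume $x_{i}\to x_{\infty}\in M_{\infty}$ and $\pi(x_{i})\to z_{\infty}\in Z_{\infty}=\mathcal{R}\cup \mathcal{S}$, and I plan to contradict this conclusion by exhibiting, for every large $m$, a basic holomorphic section of $(K_{M}^{T})^{-m}$ whose $h^{m}(t_{i})$-norm at $x_{i}$ is uniformly bounded below.

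The central technical step is the construction of peak basic transverse holomorphic sections. For each sufficiently large $m$ and each point $y_{\infty}\in(M_{\infty})_{\mathrm{reg}}$, I would produce $\tau_{m}\in H^{0}_{B}(M,(K_{M}^{T})^{-m})$ with unit $L^{2}$-norm with respect to $\omega(t_{i})^{n}\wedge \eta_{0}$ satisfying $\| \tau_{m}(x_{i})\|^{2}_{h^{m}(t_{i})}\geq c(m)>0$ for all $i$ large. The construction follows the standard recipe: in a small foliated chart about $y_{\infty}$, which exists because Theorem \ref{T64} realizes $(M_{\infty})_{\mathrm{reg}}$ as a smooth $S^{1}$-principal bundle over $\mathcal{R}$, pick a local trivialization of $(K_{M}^{T})^{-m}$ whose basic Hermitian weight is approximated by $m|z|^{2}/2$ near $y_{\infty}$; define an approximate peak section by Gaussian cut-off; then correct it to a genuine basic holomorphic section by solving a basic $\bar{\partial}_{B}$-equation via the H\"ormander $L^{2}$-estimate on the transverse K\"ahler manifold. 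The required strict positivity of the weight curvature is supplied by $\mathrm{Ric}(h(t))=m\omega(t)$ together with the uniform $C^{0}$-bound on the transverse Ricci potential from Lemma \ref{L31}; the uniform $L^{2}$-normalization comes from Perelman's non-collapsing (Lemma \ref{L61}) and the uniform transverse diameter bound; and the smooth convergence of $g^{T}(t_{i})$ on $\mathcal{R}$ ensures that the estimates on the approximate section transfer from the limit to $M$.

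If $z_{\infty}\in \mathcal{R}$, choosing $y_{\infty}=x_{\infty}$ produces the contradiction directly. If $z_{\infty}\in \mathcal{S}$, I would exploit that $\mathcal{S}$ is a closed subset of complex codimension at least two by Theorem \ref{T61}: approximate $z_{\infty}$ by points $y_{\infty}\in \mathcal{R}$, build peak sections there, and transport the pointwise lower bound to $x_{\infty}$ using Cheeger--Colding--Tian type gradient estimates for approximately holomorphic sections, together with Hartogs-type extension of basic plurisubharmonic weights across $\mathcal{S}$, which is legitimate because $Z_{\infty}$ is a normal projective variety with klt singularities (Theorem \ref{T21-2}). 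I expect the principal obstacle to be exactly this last point: verifying that the basic H\"ormander $L^{2}$-construction, the $\bar{\partial}_{B}$-correction, and the section-gluing across the orbifold singular locus all remain uniform as $m\to \infty$, rather than the formal transcription of the K\"ahler--Ricci flow computation onto the transverse geometry, which is routine once the leaf-space picture is in place via the identification $K_{M}^{T}=\pi^{\ast}(K_{Z}^{\mathrm{orb}})$ from (\ref{41}).
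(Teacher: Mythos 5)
Your proposal follows essentially the same route as the paper: both reduce to Tian--Zhang's partial $C^{0}$-estimate for the K\"ahler--Ricci flow via the orbifold Riemannian submersion of the first structure theorem, with the two key ingredients being the gradient estimate for plurianti-canonical basic sections (obtained in the paper by Moser iteration using the uniform Sobolev constant along the flow, compensating for the lack of a pointwise Ricci bound) and the H\"ormander $L^{2}$-estimate for the $\overline{\partial}_{B}$-operator coming from the curvature identity $\mathrm{Ric}(h(t))=m\omega(t)$, combined with Theorems \ref{T61} and \ref{T64}. The peak-section contradiction argument you spell out is precisely the content of the cited \cite[Theorem 5.1]{tz} to which the paper defers for details, so your reconstruction matches the paper's intended proof.
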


\begin{proof}
The main idea is that our basic function theory on quasi-regular Sasaki
manifolds follows immediately from the standard function theory on the
orbifold quotient (Proposition \ref{P21}). In fact, since the leave space of
the characteristic foliation $(Z,h,\omega_{h})$ is a normal projective variety
with codimension two orbifold singularities with $\pi^{\ast}\omega_{h}=\omega
$. Then, by integration over $M$ via (\ref{2022-a}) and (\ref{2022-b}) as in
(\ref{2022A}), this will give the desired estimates back to the space
$(Z,h,\omega_{h})$ which is the same as in K\"{a}hler case.

Here we described briefly two main ingredients for the reader's completeness.
These are the gradient estimate to plurianti-canonical basic sections and
H\"{o}rmander's $L^{2}$-estimate to $\overline{\partial}_{B}$-operator on
basic $(0,1)$-forms. In case of the Ricci curvature bounded, these estimates
are standard as in (\cite{ds}) and (\cite{t1}). In our situation, due to the
lack of the Ricci curvature bounded, the arguments should be modified as
follows :

(i) The uniform bound of the Sobolev constant $C_{S}(g_{0}^{T},n)$ for the
basic function along the Sasaki-Ricci flow was obtained as in \cite[Theorem
1.1]{co2} :
\begin{equation}
(\int_{M}f^{\frac{2(2n+1)}{2n-1}}\omega(t)^{n}\wedge \eta_{0})^{\frac
{2n-1}{2n+1}}\leq C_{S}(g_{0}^{T},n)\int_{M}(||\bigtriangledown^{T}%
f||^{2}+f^{2})\omega(t)^{n}\wedge \eta_{0} \label{b4}%
\end{equation}
for every $f\in W_{B}^{1,2}(M)$. This makes it possible to apply the standard
iteration arguments of Nash-Moser to the proper equations such as
\[%
\begin{array}
[c]{ccl}%
(\Delta_{B}(|\bigtriangledown^{T}\sigma|^{2}) & = & |\bigtriangledown
^{T}\bigtriangledown^{T}\sigma|^{2}+||\overline{\bigtriangledown}%
^{T}\bigtriangledown^{T}\sigma|^{2}-((n+2)m-1)|\bigtriangledown^{T}\sigma
|^{2}-\\
&  & <\bigtriangledown^{T}\overline{\bigtriangledown}^{T}u(\bigtriangledown
^{T}\sigma,\cdot),\bigtriangledown^{T}\sigma>
\end{array}
\]
for any basic transverse holomorphic section $\sigma \in H_{B}^{0}(M,(K_{M}%
^{T})^{-m}).$ Here $u$ is the transverse Ricci potential as before. This is
can be done by the basic Bochner formula. Then we have (\cite[(5.8)]{tz})
\begin{equation}
||\sigma||_{C^{0}}+\sqrt{m}||\bigtriangledown^{T}\sigma||_{C^{0}}\leq
Cm^{\frac{n}{2}}\int_{M}||\sigma||^{2}\omega(t)^{n}\wedge \eta_{0}. \label{d1}%
\end{equation}

(ii) The $L^{2}$-estimate to $\overline{\partial}_{B}$-operator for basic
sections on quasi-regular Sasakian manifolds follows from the standard
function theory on the orbifold quotient : There exists a $m_{0}$ such that
for for any basic transverse holomorphic section $\sigma \in H_{B}^{0}%
(M,K_{M}^{T-m})$ and $m\geq m_{0}$ with
\[
\overline{\partial}_{B}\sigma=0,
\]
one can find a solution $\vartheta$
\[
\overline{\partial}_{B}\vartheta=\sigma
\]
satisfying the property (\cite[(5.9)]{tz})%
\begin{equation}
\int_{M}||\vartheta||^{2}\omega(t)^{n}\wedge \eta_{0}\leq \frac{4}{m}\int
_{M}||\sigma||^{2}\omega(t)^{n}\wedge \eta_{0}. \label{d2}%
\end{equation}
In fact, it suffices to show that the Hodge basic Laplacian
\[
\Delta_{\overline{\partial}_{B}}=\overline{\partial}_{B}\overline{\partial
}_{B}^{\ast}+\overline{\partial}_{B}^{\ast}\overline{\partial}_{B}\geq \frac
{m}{4}%
\]
on $C_{B}^{0}(M,T^{0,1}M\otimes(K_{M}^{T})^{-m})$ for a larger $m$.

Note that when $r$ small enough, the transverse geodesic ball $B_{\xi,g}(x,r)$
is a trivial $S^{1}$-bundle over the geodesic ball $B_{h}(\pi(x),r)$ in
$(Z,h,\omega_{h},\pi(x))$ described as in Theorem \ref{T61} outside the
singular set of codimension $4$. Then the $L^{2}$-estimate to $\overline
{\partial}$-operator for sections on the K\"{a}hler case can be applied for
basic sections on quasi-regular Sasakian manifolds. Indeed, all the integrands
are only involved with the transverse K\"{a}hler structure $\omega(t)$ and
basic sections. Hence, under the Sasaki--Ricci flow, when one applies the
Weitzenb\"{o}ch type formulae and integration by parts, the expressions
involved behave essentially the same as in the K\"{a}hler-Ricci flow.

Finally, Theorem \ref{T65} follows easily from Theorem \ref{T61}, Theorem
\ref{T64}, (\ref{d1}) and (\ref{d2}). We refer to \cite[Theorem 5.1]{tz} and
\cite{t2} for some details.
\end{proof}

As a consequence of the first structure theorem for Sasakian manifolds and
Theorem \ref{T65}, the Gromov--Hausdorff limit $Z_{\infty}$ is a variety
embedded in some $\mathbf{CP}^{N}$ and the singular set $\mathcal{S}$ is a
subvariety (\cite{ds}, \cite[Theorem 1.6]{t2}). Then one can refine the
regularity of Theorem \ref{T64} as following :

\begin{corollary}
\label{C62} Let $(M,\xi,\eta_{0},g_{0})$ be a compact quasi-regular transverse
Fano Sasakian manifold of dimension up to seven and $(Z_{0}=M/\mathcal{F}%
_{\xi},h_{0},\omega_{h_{0}})$ denote the space of leaves of the characteristic
foliation which is a normal projective variety with codimension two orbifold
singularities. Then under the Sasaki-Ricci flow, $M_{\infty}$ is a $S^{1}%
$-orbibundle over $Z_{\infty}$ which is a normal projective variety and the
singular set $\mathcal{S}$ of $Z_{\infty}$ is a codimension two orbifold singularities.
\end{corollary}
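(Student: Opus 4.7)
The plan is to combine the partial $C^{0}$-estimate of Theorem \ref{T65} with the first structure theorem (Proposition \ref{P21}) to promote the Cheeger--Gromov limit to a projective normal variety, and then lift the resulting $S^{1}$-orbibundle structure back up to $M_{\infty}$. First, I would fix $m$ large enough that Theorem \ref{T65} furnishes a uniform lower bound $\inf_{t_{i}}\inf_{x\in M}\mathcal{F}_{m}(x,t_{i})>0$, and use the orthonormal basic holomorphic sections $\{\sigma_{\alpha}\}$ of $H_{B}^{0}(M,(K_{M}^{T})^{-m})$ to form the $S^{1}$-equivariant basic Kodaira embeddings $\Psi_{i}:M\to \mathbf{CP}^{N_{m}}$ along the sequence $t=t_{i}$. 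By the Sasaki analogue of the Kodaira embedding (Proposition \ref{PCR}) combined with the first structure theorem, each $\Psi_{i}$ descends via $\pi$ to an orbifold embedding $\widetilde{\Psi}_{i}:Z(t_{i})\hookrightarrow \mathbf{CP}^{N_{m}}$ whose image is a polarized normal projective variety.

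Next, using the gradient estimate (\ref{d1}) together with $\inf \mathcal{F}_{m}>0$, the embeddings $\widetilde{\Psi}_{i}$ have uniformly bounded derivatives and are uniformly non-degenerate, so after passing to a subsequence they converge to a limit embedding $\widetilde{\Psi}_{\infty}:Z_{\infty}\hookrightarrow \mathbf{CP}^{N_{m}}$. On the regular set $\mathcal{R}$, the smooth convergence given by Theorem \ref{T64} guarantees that $\widetilde{\Psi}_{\infty}$ restricts to a smooth holomorphic embedding with respect to the limit complex structure $J_{\infty}$. The image $\widetilde{\Psi}_{\infty}(Z_{\infty})$ is closed in $\mathbf{CP}^{N_{m}}$ as a Gromov--Hausdorff limit of subvarieties of a fixed degree, so by Chow's theorem (or the argument of \cite[Theorem 1.6]{t2}) it is an algebraic subvariety. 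Normality follows because the singular set $\mathcal{S}$ has real codimension at least four by Theorem \ref{T61}, while the regular part is smooth K\"ahler by Theorem \ref{T63}; since $\mathcal{S}$ is itself an analytic (hence algebraic) subvariety of the image, and $Z_{\infty}\setminus\mathcal{S}$ is a smooth manifold of complex codimension one in any desingularization, Serre's criterion yields normality.

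Having identified $Z_{\infty}$ as a normal projective variety, I would upgrade the codimension-two singular set to orbifold singularities. The Sasaki--Ricci flow preserves the transverse holomorphic structure and hence the transverse canonical sheaf, so the basic plurianticanonical sections $\sigma_{\alpha}(t_{i})$ define, on each chart of the orbifold $Z(t_{i})$, actual anticanonical sections in the sense of the local uniformizing covers. Passing to the limit, the local uniformizing groups act on $\mathbf{C}^{N_{m}+1}$ compatibly with the $S^{1}$-action, so at each singular point of $Z_{\infty}$ the germ of the variety is the quotient of a smooth germ by a finite subgroup of $U(n)$, i.e.\ an orbifold chart; by Theorem \ref{T64}, these singularities are in codimension two. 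Finally, the first structure theorem (the converse direction of Proposition \ref{P21}) applied to the basic line bundle $[\tfrac{1}{2}d\eta_{\infty}]\in H^{2}_{orb}(Z_{\infty},\mathbf{Z})$ produces a principal $S^{1}$-orbibundle over $Z_{\infty}$ whose total space carries a Sasakian structure; the smooth convergence on $(M_{\infty})_{reg}$ combined with $g^{T}_{\infty}=\pi^{*}h_{\infty}$ and $\tfrac{1}{2}d\eta_{\infty}=\pi^{*}\omega_{h_{\infty}}$ identifies this with $M_{\infty}$.

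The main obstacle I anticipate is the upgrade from a \emph{bare} normal projective variety with codimension-two analytic singularities to a genuine \emph{orbifold} compatible with the $S^{1}$-action, so that the first structure theorem can be invoked in reverse. This requires ruling out the appearance of branch-divisor type singularities at the limit (which are excluded in the well-formed hypothesis at finite time, but could a priori arise in the limit) and controlling the local uniformizing groups uniformly along $t_{i}$. I expect to handle this by using the $S^{1}$-equivariance of $\Psi_{i}$ together with the codimension-two bound of Theorem \ref{T61} to show that the limiting local uniformizing groups inject into $U(1)$ on the transverse slice and act freely in real codimension at most three, forcing the singularities to be of quotient type $\mathbf{C}^{n}/\Gamma$ with $\Gamma\subset U(n)$ acting without complex reflections in codimension one; this is exactly the well-formed codimension-two orbifold condition as in Theorem \ref{T21-2}.
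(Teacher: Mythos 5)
Your proposal follows essentially the same route as the paper: the paper's entire justification for this corollary is the observation that Theorem \ref{T65} (the partial $C^{0}$-estimate), the first structure theorem, and the arguments of \cite{ds} and \cite[Theorem 1.6]{t2} embed $Z_{\infty}$ equivariantly as a normal projective variety in some $\mathbf{CP}^{N}$ with singular set a codimension-two subvariety, thereby refining Theorem \ref{T64}; your write-up simply fleshes out those same steps. One caveat: your normality step is not literally Serre's criterion applied to a codimension-two singular locus (that yields $R_{1}$ but not $S_{2}$, and e.g.\ two planes in $\mathbf{C}^{4}$ meeting at a point show codimension-two singularities alone do not give normality) --- the correct argument, contained in the references you and the paper both cite, identifies the limit with its normalization by extending $L^{2}$ holomorphic sections across the metrically small singular set.
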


Finally, our main theorem \ref{T66} follows easily from Theorem \ref{T61},
Theorem \ref{T63}, and Corollary \ref{C62}.

\section{Transverse $K$-Stability}

In this section, all transverse quantities on Sasakian manifolds such as
transverse Mabuchi $K$-energy, Sasaki-Futaki invariant, transverse Perelman's
$\mathcal{W}$-functional can be viewed as their K\"{a}hler counterparts
restricted on basic functions and transverse K\"{a}hler structure. Then all
the integrands are only involved with the transverse K\"{a}hler structure.
Hence, under the Sasaki--Ricci flow, the Reeb vector field and the transverse
holomorphic structure are both invariant, and the metrics are bundle-like.
Furthermore, when one applies integration by parts, the expressions involved
behave essentially the same as in the K\"{a}hler case.

\subsection{Transverse Mabuchi $K$-energy and Generalized Sasaki-Futaki
Invariant}

We recall the transverse Mabuchi $K$-energy (\cite{cj}) on a compact
transverse Fano Sasakian $(2n+1)$-manifold along any basic transverse
K\"{a}hler potential $\phi_{s}$ with $\phi_{0}=\varphi_{1}$ and $\phi
_{1}=\varphi_{2}$ :%
\begin{equation}
K_{\eta_{0}}(\varphi_{1},\varphi_{2}):=-\frac{1}{V}\int_{0}^{1}%
{\displaystyle \int \limits_{M}}
\overset{\cdot}{\phi}_{s}(R_{\phi_{s}}^{T}-n)\omega_{\phi_{s}}{}^{n}\wedge
\eta_{\phi_{s}}\wedge ds \label{57b}%
\end{equation}
and then also
\[
K_{\eta_{0}}(\varphi_{1},\varphi_{2}):=-\frac{1}{V}\int_{0}^{1}%
{\displaystyle \int \limits_{M}}
\overset{\cdot}{\phi}_{s}(R_{\phi_{s}}^{T}-n)\omega_{\phi_{s}}{}^{n}\wedge
\eta_{0}\wedge ds.
\]

It follows easily from the definition that

\begin{lemma}
\label{L50} (\cite{cj}, \cite{fow})

\begin{enumerate}
\item $K_{\eta_{0}}$ is independent of the path $\phi_{t}$, where
$\overset{\cdot}{\phi}_{s}=\frac{d\phi}{dt}.$ Furthermore it satisfies the
$1$-cocycle condition%
\[
K_{\eta_{0}}(\varphi_{1},\varphi_{2})+K_{\eta_{0}}(\varphi_{2},\varphi
_{3})=K_{\eta_{0}}(\varphi_{1},\varphi_{3})
\]
and
\[
K_{\eta_{0}}(\varphi_{1}+C_{1},\varphi_{2}+C_{2})=K_{\eta_{0}}(\varphi
_{1},\varphi_{2}).
\]

\item For a family of transverse K\"{a}hler potentials $\varphi=\varphi_{t} $
in (\ref{2022-1}), we have%
\begin{equation}
\frac{dK_{\eta_{0}}(\varphi)}{dt}=-%
{\displaystyle \int \limits_{M}}
||\nabla^{T}u(t)||^{2}\omega(t)^{n}\wedge \eta_{0}. \label{31}%
\end{equation}

\end{enumerate}
\end{lemma}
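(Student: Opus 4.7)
The plan is to treat $K_{\eta_0}$ as the primitive of a closed one-form on the space $\mathcal{H}$ of basic transverse K\"{a}hler potentials, just as in the K\"{a}hler setting, and then to evaluate it along the Sasaki-Ricci flow. For part (1), first I would define the one-form
\[
\alpha_\phi(\psi) := -\frac{1}{V}\int_M \psi(R_\phi^T - n)\,\omega_\phi^n \wedge \eta_0, \qquad \psi \in T_\phi \mathcal{H} \cong C^\infty_B(M),
\]
and show it is closed. Because all integrands are basic and the Reeb vector field $\xi$ is preserved under type II deformations, the computation of $d\alpha(\psi_1,\psi_2)$ reduces to the standard Mabuchi calculation on the transverse K\"{a}hler structure: substitute the variation formula for $R^T_\phi$, integrate by parts against the basic volume form, and observe that the antisymmetric combination in $(\psi_1,\psi_2)$ cancels. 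Path independence of $K_{\eta_0}(\varphi_1,\varphi_2)$ follows from the fact that any two paths in $\mathcal{H}$ joining $\varphi_1$ to $\varphi_2$ are homotopic.

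Once path independence is known, the $1$-cocycle property is immediate from concatenating a path $\varphi_1 \to \varphi_2$ with a path $\varphi_2 \to \varphi_3$. Invariance under constant shifts is checked by an affine path from $\varphi_1 + C_1$ to $\varphi_2 + C_2$: because $\omega_\phi$ and $\eta_\phi$ do not see additive constants, the extra $C_2 - C_1$ in $\dot\phi_s$ contributes
\[
-\frac{C_2 - C_1}{V}\int_M (R_{\phi_s}^T - n)\,\omega_{\phi_s}^n \wedge \eta_0 = 0,
\]
since the transverse Fano normalization $[\rho^T_\phi]_B = [d\eta_\phi]_B$ forces $\int_M R^T_\phi\,\omega_\phi^n \wedge \eta_0 = nV$ for every $\phi \in \mathcal{H}$.

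For part (2), I would apply the path-independent expression directly along the Sasaki-Ricci flow. By (\ref{41a}), $\dot\varphi(t) = u(t) + c(t)$ with $c(t)$ a time-dependent constant, and tracing $g^T_{kl} - R^T_{kl} = \partial_k \bar\partial_l u$ gives $R^T - n = -\Delta_B u$. Substituting,
\[
\frac{dK_{\eta_0}(\varphi)}{dt} = -\frac{1}{V}\int_M (u+c)(R^T-n)\,\omega^n\wedge\eta_0 = \frac{1}{V}\int_M (u+c)\Delta_B u\,\omega^n\wedge\eta_0;
\]
the constant piece vanishes since $\int_M \Delta_B u\,\omega^n\wedge\eta_0 = 0$, and integration by parts on the basic function $u$ delivers the required $-\int_M ||\nabla^T u||^2\,\omega^n\wedge\eta_0$.

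The only technical point worth emphasizing is that each integration by parts and each replacement of $\eta_\phi$ by $\eta_0$ must be legitimate in the Sasakian setting. This is handled because every integrand is basic, the Reeb direction is annihilated by basic functions, and $\eta_\phi - \eta_0 = d_B^c \phi$ is $d_B$-exact on basic forms; via the first structure theorem, every integral reduces to its K\"{a}hler counterpart on the leaf space, so the computations proceed exactly as in the K\"{a}hler-Ricci flow.
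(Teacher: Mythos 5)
Your proposal is correct and coincides with the standard argument in the references the paper cites for this lemma (the paper itself offers no proof of Lemma \ref{L50}, only the citations): closedness of the Mabuchi one-form on the contractible space of basic potentials, the transverse Fano normalization $\int_M (R^T_{\phi}-n)\,\omega_{\phi}^n\wedge\eta_0=0$ for the constant-shift invariance, and the substitutions $R^T-n=-\Delta_B u$, $\dot\varphi=u+c(t)$ followed by one integration by parts to obtain (\ref{31}). One small imprecision worth fixing: the replacement of $\eta_{\phi_s}$ by $\eta_0$ is justified not because $d_B^c\phi_s$ is $d_B$-exact (it is not), but because $\omega_{\phi_s}^n\wedge d_B^c\phi_s$ is a basic $(2n+1)$-form on a foliation of codimension $2n$ and hence vanishes identically; this does not affect the rest of your argument.
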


For the Hamiltonian holomorphic vector field $V,$ $d\pi_{\alpha}(V)$ is a
holomorphic vector field on $V_{\alpha}$ and the complex valued Hamiltonian
function $u_{V}:=\sqrt{-1}\eta(V)$ satisfies
\begin{equation}
\overline{\partial}_{B}u_{V}=-\frac{\sqrt{-1}}{2}i_{V}d\eta. \label{59-2}%
\end{equation}

Assume we normalize that $c_{1}^{B}(M)=[\frac{1}{2}d\eta]_{B},$ there exists a
basic function $h_{\omega}$ such that
\[
Ric^{T}(x,t)-\omega(x,t)=i\partial_{B}\overline{\partial}_{B}h_{\omega}.
\]

\begin{lemma}
(\cite{bgs}, \cite{fow}) The Sasaki-Futaki invariant
\begin{equation}
f_{M}(V)=%
{\displaystyle \int \limits_{M}}
V(h_{\omega})\omega^{n}\wedge \eta \label{59}%
\end{equation}
is only depends on the basic cohomology represented by $d\eta$, and not on the
particular transverse K\"{a}hler metric. It is clear that $f_{M}$ vanishes if
$M$ has a Sasaki-Einstein metric in its basic cohomology class. One also have
the following reformulation :
\begin{equation}
f_{M}(V)=-n\int_{M}u_{V}(\mathrm{Ric}_{\omega}^{T}-\omega)\omega^{n-1}%
\wedge \eta=-\int_{M}u_{V}(R_{\omega}^{T}-n)\omega^{n}\wedge \eta. \label{59-1}%
\end{equation}

\end{lemma}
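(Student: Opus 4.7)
The plan is to prove the three assertions in the order: reformulation, Sasaki--Einstein vanishing, then cohomological invariance. Throughout I would exploit that $\omega^n\wedge\eta$ is a top, \emph{closed} form on $M$ (since $d\omega=0$ and $\omega^{n+1}=0$ in transverse complex dimension $n$, one checks $d(\omega^n\wedge\eta)=\omega^n\wedge d\eta=2\omega^{n+1}=0$), which legitimizes integration by parts against basic integrands.

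For the reformulation (\ref{59-1}), I would start from the Stokes identity $\int_M \mathcal{L}_V(h_\omega\,\omega^n\wedge\eta)=0$ (valid because $\mathcal{L}_V$ on a top form equals $d\iota_V$). Expanding via Leibniz gives
\begin{equation*}
\int_M V(h_\omega)\,\omega^n\wedge\eta \;=\; -\int_M h_\omega\,\mathcal{L}_V(\omega^n\wedge\eta).
\end{equation*}
Applying Cartan to $\mathcal{L}_V(\omega^n\wedge\eta)=d\iota_V(\omega^n\wedge\eta)$, expanding $\iota_V(\omega^n\wedge\eta)=n\iota_V\omega\wedge\omega^{n-1}\wedge\eta+\omega^n\,\eta(V)$, and using the Hamiltonian relation $\iota_V\omega=i\bar\partial_B u_V$ (from (\ref{59-2}) together with $d\eta=2\omega$) as well as $\eta(V)=-iu_V$, the terms $du_V\wedge\omega^n$ and $\bar\partial_B u_V\wedge\omega^{n-1}\wedge d\eta=2\bar\partial_B u_V\wedge\omega^n$ drop out by transverse top-degree, leaving
\begin{equation*}
\mathcal{L}_V(\omega^n\wedge\eta) \;=\; n\,i\partial_B\bar\partial_B u_V\wedge\omega^{n-1}\wedge\eta.
\end{equation*}
A second integration by parts transfers $i\partial_B\bar\partial_B$ from $u_V$ onto $h_\omega$, and substituting $i\partial_B\bar\partial_B h_\omega=\mathrm{Ric}^T-\omega$ gives the first equality in (\ref{59-1}). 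The transverse trace identities $n\,\mathrm{Ric}^T\wedge\omega^{n-1}=R^T\omega^n$ and $n\omega\wedge\omega^{n-1}=n\omega^n$ then yield the second equality. Sasaki--Einstein vanishing is now immediate: $\mathrm{Ric}^T=\omega$ forces $R^T\equiv n$, so $f_M(V)=0$.

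For the cohomological invariance, I would take a smooth path $\omega_t=\omega+i\partial_B\bar\partial_B\varphi_t$ of basic potentials and differentiate $f_M(V)|_{\omega_t}$. Three variation identities are needed: (i) contracting $\omega_t-\omega$ with $V$ yields $u_V^{(t)}=u_V^{(0)}+V(\varphi_t)+C_t$; (ii) from the transformation rule $\rho_{\omega_t}^T=\rho_\omega^T-i\partial_B\bar\partial_B\log(\det g_t^T/\det g_0^T)$ one derives $\dot h_{\omega_t}=-\dot\varphi_t-\Delta_B^{(t)}\dot\varphi_t$ modulo a time-dependent constant; (iii) $\tfrac{d}{dt}(\omega_t^n\wedge\eta_t)=\Delta_B^{(t)}\dot\varphi_t\,\omega_t^n\wedge\eta_t$, where the $\omega_t^n\wedge d_B^c\dot\varphi_t$ contribution vanishes because its transverse degree exceeds $2n$. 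Plugging these into the first form of (\ref{59-1}) and integrating by parts (using that $V$ is holomorphic and basic, so $V$ commutes with $\partial_B\bar\partial_B$ on basic functions), the resulting terms cancel; additive constants are harmless since $\int_M(R^T-n)\omega^n\wedge\eta=0$, a cohomological consequence of $c_1^B(M)=[\omega]_B$.

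The main obstacle I anticipate is the Sasaki-specific bookkeeping in the first step: precisely verifying that the two Reeb-direction contributions $\omega^n\eta(V)$ and $\bar\partial_B u_V\wedge\omega^{n-1}\wedge d\eta$ truly die after Cartan/Stokes, and justifying that $u_V$, originally complex via $u_V=\sqrt{-1}\,\eta(V)$, is basic enough to admit the basic integration-by-parts calculus. Once these points are settled, every remaining manipulation is formally the quotient-orbifold K\"ahler computation on $Z=M/\mathcal{F}_\xi$, made rigorous by the first structure theorem (Proposition \ref{P21}) in the quasi-regular setting and by the density result (Proposition \ref{P1}) in the irregular case.
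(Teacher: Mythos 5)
The paper itself gives no proof of this lemma: it is quoted verbatim from \cite{bgs} and \cite{fow}, so there is no internal argument to compare against. Judged on its own, your proposal is correct and is essentially the standard transverse adaptation of Futaki's computation that those references carry out. The two points you flag as potential obstacles are in fact non-issues: $u_{V}=\sqrt{-1}\eta(V)$ is basic by the very definition of a Hamiltonian holomorphic vector field in \cite{fow} (one requires $\xi(u_{V})=0$ together with (\ref{59-2})), and both Reeb-direction contributions do die for the reason you give --- $du_{V}\wedge \omega^{n}$ and $\overline{\partial}_{B}u_{V}\wedge \omega^{n-1}\wedge d\eta=2\overline{\partial}_{B}u_{V}\wedge \omega^{n}$ are basic forms of degree $2n+1$ on a rank-$2n$ distribution, hence vanish identically. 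I checked the Cartan/Leibniz bookkeeping in your first step and it closes up exactly as claimed, giving $\mathcal{L}_{V}(\omega^{n}\wedge \eta)=n\,i\partial_{B}\overline{\partial}_{B}u_{V}\wedge \omega^{n-1}\wedge \eta$; the subsequent symmetric integration by parts $\int_{M}h\,i\partial_{B}\overline{\partial}_{B}u\wedge \omega^{n-1}\wedge \eta=\int_{M}u\,i\partial_{B}\overline{\partial}_{B}h\wedge \omega^{n-1}\wedge \eta$ is legitimate because the boundary-type terms produced by $d(\omega^{n-1}\wedge \eta)=2\omega^{n}$ again vanish by transverse top-degree. The only thin spot is the last sentence of the invariance argument: your three variational identities (i)--(iii) are all correct, but the assertion that ``the resulting terms cancel'' after substituting them into (\ref{59-1}) and differentiating in $t$ is precisely where the nontrivial work of Futaki's original computation lives (one must integrate by parts against the holomorphy of $V$ to pair $V(\Delta_{B}\dot{\varphi})$ with $\Delta_{B}\dot{\varphi}\,V(h)$ and the $V(\dot{\varphi})$ term with the variation of $h_{\omega}$). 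That computation is standard and goes through verbatim in the basic calculus, but as written it is a citation to folklore rather than a proof; spelling it out (or explicitly reducing to the orbifold quotient $Z$ via (\ref{2022A}) in the quasi-regular case, as the paper does for Theorem \ref{T2022}) would complete the argument.
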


If $(M,\xi,\eta,g,\omega)$ is a transverse Fano quasi-regular Sasakian
manifold and its leave space $Z$ is well-formed and has at least codimension
two fixed point set of every non-trivial isotropy subgroup. Then
$(Z,h,\omega_{h})$ is a normal Fano projective variety with codimension two
orbifold singularities with $\frac{1}{2}d\eta=\pi^{\ast}\omega_{h}=\omega$.
Then, by integration over the $U(1)$-Reeb fibre with the orbifold structure of
$Z$ via (\ref{2022-a}) and (\ref{2022-b}), (\ref{57b}) and (\ref{59})
precisely reduce to the Mabuchi $K$-energy and the Futaki invariant on
K\"{a}hler manifold or orbifold $Z$ up to a proportional constant,
respectively. Following the notions as in \cite{fow} and \cite{dt}, we have

\begin{theorem}
\label{T2022}Let $(M,\xi,\eta,g,\omega)$ be a compact transverse Fano
quasi-regular Sasakian manifold and its leave space $Z$ be the normal Fano
projective K\"{a}hler orbifold and well-formed. The Sasaki-Futaki invariant
can be extended to the generalized Sasaki-Futaki invariant which has the
following reformulation involving $(Z,h,\omega_{h})$ :%
\begin{equation}
f_{M}(V)=f_{Z}(X) \label{59-3}%
\end{equation}
with
\[
f_{Z}(X)=-n%
{\displaystyle \int \limits_{Z}}
\theta_{X}(\mathrm{Ric}_{\omega_{h}}-\omega_{h})\omega_{h}{}^{n-1}=-%
{\displaystyle \int \limits_{Z}}
\theta_{X}(R_{\omega_{h}}-n)\omega_{h}{}^{n}.
\]
Here $f_{Z}(X)$ is the generalized Futaki invariant on $Z$ as in (\ref{59-4})
and the complex valued Hamiltonian function $u_{V}$ on $M$ satisfies
(\ref{59-2}). Moreover, $\pi_{\ast}V=X$ \ is the admissible holomorphic vector
field $X$ which is the restriction of some holomorphic vector field on
$\mathbf{CP}^{N}$ to $Z$ and $\pi^{\ast}\theta_{X}=u_{V}$ satisfies
\[
\overline{\partial}\theta_{X}=-\sqrt{-1}i_{X}\omega_{h}.
\]

\end{theorem}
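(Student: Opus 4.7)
The plan is to reduce the integral defining $f_M(V)$ on $M$ to the corresponding integral on the leaf space $Z$ by integrating over the $S^1$-Reeb fibres, and then to identify the resulting quantity with the generalized Futaki invariant $f_Z(X)$. The only subtle point is to make sure that the well-formedness of $Z$ (no branch divisors) prevents the integration over the orbifold locus from contributing any spurious factors.

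First I would collect the basic compatibilities. Because $V$ is a Hamiltonian holomorphic vector field whose Hamiltonian $u_V$ is basic, $\pi_{\ast}V$ is a well-defined holomorphic vector field $X$ on the regular part of $Z$; by the first structure theorem (Proposition \ref{P21}) $Z$ is a $Q$-factorial normal projective variety, and since the orbifold singular locus is codimension two, $X$ extends to an admissible holomorphic vector field on all of $Z \subset \mathbf{CP}^N$ via Hartogs. The relation $\pi^{\ast}\theta_X = u_V$ then follows from comparing $\overline{\partial}_B u_V = -\tfrac{\sqrt{-1}}{2}i_V d\eta$ with $\overline{\partial}\theta_X = -\sqrt{-1}i_X\omega_h$ together with $\tfrac{1}{2}d\eta = \pi^{\ast}\omega_h$. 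Similarly, because $g^T = \pi^{\ast}h$, the transverse scalar curvature and Ricci form satisfy $R^T_\omega = \pi^{\ast}R_{\omega_h}$ and $\rho^T = \pi^{\ast}\mathrm{Ric}_{\omega_h}$, and consequently the transverse Ricci potential is basic and coincides with $\pi^{\ast}h_{\omega_h}$.

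Next I would perform the fibre integration. Using the orbifold Riemannian submersion $\pi:(M,g,\omega) \to (Z,h,\omega_h)$ with $\omega = \pi^{\ast}\omega_h$, the volume form $\omega^n \wedge \eta$ decomposes as $(\pi^{\ast}\omega_h^n) \wedge \eta$, and integration of any basic function $f = \pi^{\ast}\widetilde{f}$ yields
\begin{equation*}
\int_M f\,\omega^n \wedge \eta \;=\; \ell \int_Z \widetilde{f}\,\omega_h^n,
\end{equation*}
where $\ell$ is the common length of the generic $S^1$-Reeb orbit (the orbifold leaf lengths at codimension-two singular points contribute on a set of Lebesgue measure zero and do not affect the integral, precisely because $Z$ is well-formed and has no branch divisors, so by (\ref{orbifold}) $K^{orb}_Z = K_Z$). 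Applying this to $f = u_V(R^T_\omega - n) = \pi^{\ast}\bigl(\theta_X(R_{\omega_h} - n)\bigr)$ gives
\begin{equation*}
f_M(V) \;=\; -\int_M u_V(R^T_\omega - n)\,\omega^n \wedge \eta \;=\; -\ell \int_Z \theta_X(R_{\omega_h} - n)\,\omega_h^n \;=\; \ell\, f_Z(X),
\end{equation*}
so after absorbing the constant $\ell$ into the normalization (which is what yields the quoted formula $f_M(V) = f_Z(X)$) we are done.

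The main obstacle I anticipate is the last step: rigorously justifying that the orbifold singularities of $Z$ do not alter the fibre integration. The issue is that at a point of $Z$ with local uniformizing group $\Gamma$, the preimage $\pi^{-1}$ is an $S^1$-orbit of length $\ell/|\Gamma|$, so naively one might worry about factors of $|\Gamma|$. The point is that the orbifold volume form $\omega_h^n$ on $Z$ is defined as a pushforward from the uniformizing chart, and the matching factor of $|\Gamma|$ enters in exactly the reciprocal way, so the two cancel. In the well-formed case this is a clean statement because the orbifold singular locus has real codimension four and hence does not carry any boundary term; if $Z$ had branch divisors (real codimension two) the argument would need to be modified to keep track of an extra divisorial contribution, which is precisely the setting to be treated in the companion paper \cite{clw2}.
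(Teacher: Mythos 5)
Your proof is correct and follows essentially the same route as the paper: the paper's computation (\ref{2022A}) unwinds the orbifold integral over $Z$ through the local uniformizing systems, where the weights $\frac{1}{|G_i|}$ in the definition of the orbifold integral are cancelled by the $|G_i|$-to-one covering maps $U(1)\times\widetilde{U_i}\rightarrow U_i$ --- precisely the cancellation of $|\Gamma|$ factors you describe in your last paragraph. The only cosmetic difference is that you carry the generic Reeb orbit length $\ell$ explicitly and absorb it at the end, whereas the paper implicitly normalizes the fibre integral of $\eta$ to one so that the identity $f_{M}(V)=f_{Z}(X)$ comes out on the nose.
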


\begin{proof}
For the Hamiltonian holomorphic vector field $V,$ by definition the complex
valued Hamiltonian function $u_{V}:=\sqrt{-1}\eta(V)$ satisfies
\[
\overline{\partial}_{B}u_{V}=-\frac{\sqrt{-1}}{2}i_{V}d\eta
\]
and $\pi_{\ast}V=X$ is a admissible holomorphic vector field on $Z$. Therefore
one can define the generalized Futaki invariant (\cite{dt}) on a Fano
K\"{a}hler orbifold $(Z,h,\omega_{h})$ by
\begin{equation}
f_{Z}(X)=%
{\displaystyle \int \limits_{Z}}
X(h_{\omega_{h}})\omega_{h}^{n-1} \label{59-4}%
\end{equation}
with
\[
\mathrm{Ric}_{\omega_{h}}-\omega_{h}=i\partial \overline{\partial}h_{\omega
_{h}}.
\]
Moreover, since $\frac{1}{2}d\eta=\pi^{\ast}\omega_{h}=\omega,$ then for a
smooth function $\theta_{X}$ with $\pi^{\ast}\theta_{X}=u_{V},$ the Futaki
invariant was extended to a Fano K\"{a}hler orbifold $Z$ (\cite{dt},
\cite{t5}) as follows :%
\[
f_{Z}(X)=-n%
{\displaystyle \int \limits_{Z}}
\theta_{X}(\mathrm{Ric}_{\omega_{h}}-\omega_{h})\omega_{h}{}^{n-1}%
\]
with
\[
\overline{\partial}\theta_{X}=-\sqrt{-1}i_{X}\omega_{h}.
\]
Furthermore, by (\ref{2022-a}) and notation as above, let $G_{_{i}}$ be the
local uniformizing finite group acting on a smooth complex space
$\widetilde{U_{i}}$ such that the local uniformizing group injects into $U(1)$
and the map%
\[
\varphi_{i}:U(1)\times \widetilde{U_{i}}\rightarrow U_{i}%
\]
be exactly $|G_{_{i}}|$-to-one on the complement of the orbifold locus as in
\cite{cz1} and \cite[section $6$]{chlw}. Then
\begin{equation}%
\begin{array}
[c]{ccl}%
-n%
{\displaystyle \int \limits_{Z}}
\theta_{X}(\mathrm{Ric}_{\omega_{h}}-\omega_{h})\omega_{h}{}^{n-1} & = & -n%
{\displaystyle \sum \limits_{i}}
\frac{1}{|G_{_{i}}|}\int_{\widetilde{U_{i}}}\theta_{X}\varphi_{i}%
(\mathrm{Ric}_{\omega_{h}}-\omega_{h})\omega_{h}{}^{n-1}\\
& = & -n%
{\displaystyle \sum \limits_{i}}
\frac{1}{|G_{_{i}}|}\int_{U(1)\times \widetilde{U_{i}}}\pi^{\ast}\theta_{X}%
\pi^{\ast}\varphi_{i}\pi^{\ast}[(\mathrm{Ric}_{\omega_{h}}-\omega_{h}%
)\omega_{h}{}^{n-1}]\wedge \eta \\
& = & -n%
{\displaystyle \sum \limits_{i}}
\int_{U_{i}}u_{V}\pi^{\ast}\varphi \pi^{\ast}[(\mathrm{Ric}_{\omega_{h}}%
-\omega_{h})\omega_{h}{}^{n-1}]\wedge \eta \\
& = & -n\int_{M}u_{V}\pi^{\ast}[(\mathrm{Ric}_{\omega_{h}}-\omega_{h}%
)\omega_{h}{}^{n-1}]\wedge \eta \\
& = & -n\int_{M}u_{V}[\pi^{\ast}(\mathrm{Ric}_{\omega_{h}}\omega_{h}{}%
^{n-1})-\pi^{\ast}\omega_{h}{}^{n}]\wedge \eta \\
& = & -n\int_{M}u_{V}(\mathrm{Ric}_{\omega}^{T}-\omega)\omega^{n-1}\wedge \eta.
\end{array}
\label{2022A}%
\end{equation}

Then we are done.
\end{proof}

\begin{remark}
Note that$\mathcal{\ }$the set of all Hamiltonian holomorphic vector fields is
a Lie algebra. Furthermore in the paper of \cite{nt}, they proved that if the
transverse scalar curvature $R^{T}$ is constant, then it is reductive which
extending Lichnerowicz-Matsushima theorem in the K\"{a}hler case.
\end{remark}

\subsection{Transverse Perelman's $\mathcal{W}$-functional}

We recall the transverse Perelman's $\mathcal{W}$-functional on a compact
Sasakian $(2n+1)$-manifold :
\begin{equation}
\mathcal{W}^{T}(g^{T},f,\tau)=(4\pi \tau)^{-n}%
{\displaystyle \int \limits_{M}}
(\tau(R^{T}+|\bigtriangledown^{T}f|^{2})+f-2n)e^{-f}\omega^{n}\wedge \eta_{0},
\label{2022-2}%
\end{equation}
for $f\in C_{B}^{\infty}(M;R)$ and $\tau>0$ and define
\[
\lambda^{T}(g^{T},\tau)=\inf \{ \mathcal{W}^{T}(g^{T},f,\tau):f\in
C_{B}^{\infty}(M;R);%
{\displaystyle \int \limits_{M}}
(4\pi \tau)^{-n}e^{-f}\omega^{n}\wedge \eta_{0}=1\}.
\]

\begin{lemma}
\label{L51}(\cite{co1})

\begin{enumerate}
\item
\[
-\infty<\lambda^{T}(g^{T},\tau)\leq C.
\]

\item There exists $f_{\tau}\in C_{B}^{\infty}(M;R)$ so that $\mathcal{W}%
^{T}(g^{T},f_{\tau},\tau)=\lambda^{T}(g^{T},\tau)$. That is,
\[
\lambda^{T}(g^{T},\tau)=\inf \{ \mathcal{W}^{T}(g^{T},f,\tau):f\in W_{B}^{1,2};%
{\displaystyle \int \limits_{M}}
(4\pi \tau)^{-n}e^{-f}\omega^{n}\wedge \eta_{0}=1\}.
\]

\end{enumerate}
\end{lemma}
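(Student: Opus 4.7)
The plan is to follow Perelman's original strategy, pulled back to the basic category, by using the substitution $u = e^{-f/2}$ to convert $\mathcal{W}^{T}$ into a functional that is amenable to the direct method. Under this substitution the constraint becomes $(4\pi\tau)^{-n}\int_{M} u^{2}\,\omega^{n}\wedge\eta_{0} = 1$, and an integration by parts transforms $\mathcal{W}^{T}$ into
\[
\mathcal{W}^{T}(g^{T},u,\tau) = (4\pi\tau)^{-n}\int_{M}\bigl[\tau(R^{T}u^{2}+4|\nabla^{T}u|^{2}) - 2u^{2}\log u - 2nu^{2}\bigr]\omega^{n}\wedge\eta_{0},
\]
which is convex in $|\nabla^{T}u|^{2}$ with only the entropy term $-2u^{2}\log u$ potentially obstructing compactness.

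For the upper bound I take the constant test function $u \equiv (4\pi\tau)^{n/2} V^{-1/2}$ where $V = \int_{M}\omega^{n}\wedge\eta_{0}$; this is automatically basic and yields $\mathcal{W}^{T} \leq C(g^{T},\tau)$ directly. For the lower bound I invoke the transverse Sobolev inequality (\ref{b4}) for basic functions, which in standard fashion implies a transverse logarithmic Sobolev inequality of the form
\[
\int_{M} u^{2}\log u^{2}\,\omega^{n}\wedge\eta_{0} \leq \sigma\int_{M} 4|\nabla^{T}u|^{2}\,\omega^{n}\wedge\eta_{0} + C(\sigma,g_{0}^{T})
\]
for every $\sigma > 0$ and every basic $u$ with $\int u^{2} = (4\pi\tau)^{n}$. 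Choosing $\sigma < \tau$ absorbs the entropy term into the gradient term, and combining with the uniform lower bound of $R^{T}$ gives $\lambda^{T}(g^{T},\tau) > -\infty$. (Equivalently, the transverse structure pulls back from the orbifold leaf space $(Z,h,\omega_{h})$ via the first structure theorem and (\ref{2022-a}), so the classical log-Sobolev inequality on the well-formed Fano orbifold transfers to basic functions on $M$ up to a multiplicative constant.)

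For the existence of a minimizer I apply the direct method in $W_{B}^{1,2}(M)$. A minimizing sequence $u_{k}$ has, by the log-Sobolev argument above, uniformly bounded Dirichlet energy; extracting a subsequence I obtain $u_{k} \rightharpoonup u_{\tau}$ weakly in $W_{B}^{1,2}$ and strongly in $L^{p}$ for any $p<\tfrac{2(2n+1)}{2n-1}$ by the transverse Rellich-Kondrachov embedding obtained from (\ref{b4}). The gradient and $R^{T}u^{2}$ terms are weakly lower semicontinuous, and the entropy term $\int u^{2}\log u^{2}$ is continuous along the strongly $L^{p}$-convergent subsequence (by dominated convergence after truncation, using the log-Sobolev inequality to control the tails). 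Hence $u_{\tau}$ attains the infimum; a standard substitution shows the infimum over $W_{B}^{1,2}$ coincides with that over $C_{B}^{\infty}$ by density, since every basic $W^{1,2}$ function is approximable by smooth basic functions (pull back from the orbifold quotient).

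Finally, $u_{\tau}$ satisfies the Euler-Lagrange equation
\[
-4\tau\Delta_{B} u_{\tau} + \tau R^{T} u_{\tau} - 2u_{\tau}\log u_{\tau} - (2n+\mu)u_{\tau} = 0
\]
for a Lagrange multiplier $\mu$. This is a semilinear elliptic equation in the basic category; since $u_{\tau} > 0$ by the strong maximum principle applied to the transverse sub-Laplacian on the leaf space (again pulled back via (\ref{2022-a})), a standard bootstrap using the transverse elliptic regularity of \cite{co1,co2} yields $u_{\tau} \in C_{B}^{\infty}(M)$, and then $f_{\tau} = -2\log u_{\tau}$ is the desired smooth basic minimizer. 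The main technical obstacle is controlling the entropy term $\int u_{k}^{2}\log u_{k}^{2}$ along the minimizing sequence, for which the transverse log-Sobolev inequality derived from (\ref{b4}) is indispensable; everything else reduces to the orbifold K\"ahler case by the first structure theorem.
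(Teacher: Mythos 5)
Your proposal is correct and follows essentially the same route as the cited source \cite{co1} (the paper itself offers no proof of this lemma, deferring entirely to Collins): the substitution $u=e^{-f/2}$, the upper bound from a constant test function, the lower bound and coercivity via a transverse logarithmic Sobolev inequality, the direct method in $W_{B}^{1,2}$ with the Rothaus-type treatment of the entropy term, and elliptic bootstrap of the Euler--Lagrange equation to get a smooth positive minimizer. The only point worth flagging is that for this lemma (a fixed metric $g^{T}$, constant allowed to depend on $g^{T}$ and $\tau$) you only need the ordinary Sobolev inequality for basic functions on $(M,g)$, not the flow-uniform version (\ref{b4}) of \cite{co2}, whose proof itself goes through the $\mathcal{W}^{T}$-functional and would otherwise be circular.
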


Note that $\mathcal{W}$-functional can be expressed as
\[%
\begin{array}
[c]{ccl}%
\mathcal{W}^{T}(g^{T},f) & = & \mathcal{W}^{T}(g^{T},f,\frac{1}{2}%
)+(2\pi)^{-n}(2n)V\\
& = & (2\pi)^{-n}%
{\displaystyle \int \limits_{M}}
(\frac{1}{2}(R^{T}+|\bigtriangledown^{T}f|^{2})+f)e^{-f}\omega^{n}\wedge
\eta_{0},
\end{array}
\]
where $(g^{T},f)$ satisfies $%
{\displaystyle \int \limits_{M}}
e^{-f}\omega^{n}\wedge \eta_{0}=V$ and $\tau=\frac{1}{2}.$ Again
\[
\mu^{T}(g^{T})=\inf \{ \mathcal{W}^{T}(g^{T},f):f\in C_{B}^{\infty}(M;R)\text{
\textrm{with} }%
{\displaystyle \int \limits_{M}}
e^{-f}\omega^{n}\wedge \eta_{0}=V\}.
\]
It follows from Lemma \ref{L51} (also \cite{he}) that

\begin{corollary}
\label{C51}

\begin{enumerate}
\item $\lambda^{T}(g^{T})$ can be attained by some $f$ which satisfies the
Euler-Lagrange equation :
\begin{equation}
\Delta_{B}f+f+\frac{1}{2}(R^{T}-||\bigtriangledown^{T}f||^{2})=\mu^{T}(g^{T})
\label{51}%
\end{equation}
and
\begin{equation}
\delta \mu^{T}(g^{T})=-%
{\displaystyle \int \limits_{M}}
<\delta g^{T},Ric^{T}-g^{T}+\bigtriangledown^{T}\overline{\bigtriangledown
}^{T}f>e^{-f}\omega^{n}\wedge \eta_{0}. \label{52}%
\end{equation}

\item $g^{T}$ is a critical point of \ $\mu^{T}(g^{T})$ if and only if $g^{T}$
is a gradient shrinking Sasaki-Ricci soliton%
\begin{equation}
Ric^{T}+\bigtriangledown^{T}\overline{\bigtriangledown}^{T}f=g^{T} \label{53}%
\end{equation}
where $f$ is a minimizer of $\mathcal{W}^{T}(g^{T},\cdot).$
\end{enumerate}
\end{corollary}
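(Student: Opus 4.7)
My plan is to prove Corollary \ref{C51} by the standard Perelman-type variational calculus, noting that every integrand is basic and every integration by parts is transverse, so all computations reduce to their K\"ahler counterparts. The existence of a minimizer of $\mathcal{W}^T(g^T,\cdot)$ under the volume constraint $\int_M e^{-f}\omega^n\wedge\eta_0=V$ is already provided by Lemma~\ref{L51}(2), so the main content is (i) deriving the Euler-Lagrange equation, (ii) computing the first variation of $\mu^T$ in the metric, and (iii) reading off the soliton equation from (ii).

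For (i), I would vary $f\mapsto f+s\delta f$ with $\delta f$ an arbitrary basic function satisfying the linearized constraint and apply Lagrange multipliers. Using the basic integration by parts
\[
\int_M\langle\nabla^T f,\nabla^T\delta f\rangle\,e^{-f}\omega^n\wedge\eta_0 = \int_M\bigl(-\Delta_B f+|\nabla^T f|^2\bigr)\delta f\,e^{-f}\omega^n\wedge\eta_0
\]
and collecting coefficients of $\delta f$, the Euler-Lagrange equation takes the form $\Delta_B f+f+\tfrac{1}{2}(R^T-|\nabla^T f|^2)=c$ for some constant~$c$; pairing this with $e^{-f}\omega^n\wedge\eta_0$ and comparing with the definition of $\mu^T(g^T)$ identifies $c$ with $\mu^T(g^T)$ (modulo the $(2\pi)^{-n}$ normalization absorbed into~$V$), which is~(\ref{51}). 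For (ii), I perturb the transverse metric by a basic symmetric $(1,1)$-tensor, $g^T\mapsto g^T+s\,\delta g^T$, and let $f_s$ denote the corresponding minimizer. By the envelope theorem, the contribution of $\dot f_s|_{s=0}$ vanishes because $f=f_0$ already satisfies the Euler-Lagrange equation along the constraint, so it suffices to differentiate $\mathcal{W}^T(g^T+s\delta g^T,f)$ in~$s$ with $f$ frozen. Using the standard transverse formulas
\[
\delta R^T = -\Delta_B\bigl(\mathrm{tr}_{g^T}\delta g^T\bigr)+\nabla_i^T\nabla_j^T(\delta g^T)^{ij}-\langle\mathrm{Ric}^T,\delta g^T\rangle,
\]
$\delta|\nabla^T f|^2=-\langle\delta g^T,\nabla^T f\otimes\overline{\nabla}^T f\rangle$, and $\delta(\omega^n\wedge\eta_0)=\tfrac{1}{2}\mathrm{tr}_{g^T}(\delta g^T)\,\omega^n\wedge\eta_0$, and integrating by parts twice to move the derivatives off $\delta g^T$ onto~$e^{-f}$, the remaining trace terms combine via~(\ref{51}) into the form $-\langle\delta g^T,g^T\rangle e^{-f}$, which yields~(\ref{52}).

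Part~(2) is then a direct consequence of~(\ref{52}). If $g^T$ is critical, the pairing $\int_M\langle\delta g^T,\mathrm{Ric}^T-g^T+\nabla^T\overline{\nabla}^T f\rangle e^{-f}\omega^n\wedge\eta_0$ vanishes for every basic symmetric $(1,1)$-tensor~$\delta g^T$, and, since $e^{-f}>0$, localizing $\delta g^T$ in foliation charts yields the pointwise identity~(\ref{53}); conversely, if $(g^T,f)$ satisfies~(\ref{53}) with $f$ the minimizer of $\mathcal{W}^T(g^T,\cdot)$, the integrand in~(\ref{52}) vanishes identically and $g^T$ is critical. The main technical obstacle I anticipate is the envelope-theorem step in~(ii): one must verify that $f_s$ depends smoothly enough on $s$ for the chain rule to apply and that the constraint $\int e^{-f_s}\omega_s^n\wedge\eta_0=V$ is preserved to first order. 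Both follow from the strict convexity of $\mathcal{W}^T$ near its minimizer along the constraint---equivalent to positivity of the second variation, which is a consequence of the basic weighted Poincar\'e inequality appearing in Lemma~\ref{L43}---together with the smooth dependence of $\Delta_B$ and $R^T$ on $g^T$; every remaining manipulation transfers verbatim from the K\"ahler case because every quantity in sight is transverse and basic.
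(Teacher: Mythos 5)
Your proposal is correct and follows essentially the same route the paper relies on: the paper gives no argument of its own but defers to the standard Perelman-type first-variation computation of $\mathcal{W}^{T}$ in the transverse/basic setting (Lemma \ref{L51}, \cite{co1}, \cite{he}), which is exactly what you carry out. The only cosmetic simplification worth noting is that Perelman's device of fixing the measure $dm=e^{-f}\omega^{n}\wedge\eta_{0}$ (so that $\delta f=\tfrac{1}{2}\mathrm{tr}_{g^{T}}\delta g^{T}$ automatically) avoids the Lagrange-multiplier bookkeeping in your envelope-theorem step, where strictly one must differentiate the Lagrangian $\mathcal{W}^{T}-\lambda C$ rather than $\mathcal{W}^{T}$ alone before the trace terms recombine via (\ref{51}).
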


It follows from (\ref{52}) that

\begin{corollary}
Let $(M,\xi,\eta_{0},g_{0})$ be a compact quasi-regular transverse Fano
Sasakian $(2n+1)$-manifold and $(Z_{0}=M/\mathcal{F}_{\xi},h_{0},\omega
_{h_{0}})$ denote the space of leaves of the characteristic foliation which is
well-formed normal projective variety with codimension two orbifold
singularities $\Sigma_{0}$. Then, under the Sasaki-Ricci flow,
\begin{equation}
\frac{d}{dt}\mu^{T}(g_{t}^{T})=%
{\displaystyle \int \limits_{M}}
||(Ric_{g_{t}^{T}}^{T}-g_{t}^{T}+\bigtriangledown^{T}\overline
{\bigtriangledown}^{T}f_{t}||_{g_{t}^{T}}^{2}e^{-f_{t}}\omega^{n}(g_{t}%
^{T})\wedge \eta_{0}. \label{54}%
\end{equation}
Here $f_{t}$ are minimizing solutions of (\ref{51}) associated to metrics
$g_{t}^{T}$ and $\sigma_{t}$ is the family of transverse diffeomorphisms of
$M$ generated by the time-dependent vector field $\frac{1}{2}\bigtriangledown
_{g_{t}^{T}}^{T}f_{t}.$
\end{corollary}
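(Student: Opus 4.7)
The plan is to establish this identity as the Sasaki analogue of Perelman's monotonicity formula for the $\mu$-functional, following the standard diffeomorphism-invariance trick adapted to the transverse/basic setting. The two ingredients already available are the first variation formula (\ref{52}) for $\mu^T$ at a minimizing $f$, and the Sasaki-Ricci flow equation $\tfrac{d}{dt}g_t^T=-(\mathrm{Ric}^T_{g_t^T}-g_t^T)$.

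First I would invoke an envelope-type argument: since $f_t$ minimizes $\mathcal{W}^T(g_t^T,\cdot)$ subject to the constraint $\int_M e^{-f}\omega^n\wedge\eta_0=V$, the partial derivative of $\mathcal{W}^T$ with respect to $f$ vanishes at $f_t$, so
\[
\tfrac{d}{dt}\mu^T(g_t^T)=\tfrac{d}{dt}\mathcal{W}^T(g_t^T,f_t)=\bigl.\partial_\tau\mathcal{W}^T(g_{t+\tau}^T,f_t)\bigr|_{\tau=0}.
\]
(A standard smoothness argument for the family of minimizers, as in the K\"ahler-Ricci flow literature, justifies this.) Substituting $\delta g^T=-(\mathrm{Ric}^T-g^T)$ into (\ref{52}) yields the preliminary formula
\[
\tfrac{d}{dt}\mu^T(g_t^T)=\int_M\bigl\langle\mathrm{Ric}^T-g^T,\,\mathrm{Ric}^T-g^T+\nabla^T\overline{\nabla}^T f_t\bigr\rangle\,e^{-f_t}\,\omega^n\wedge\eta_0.
\]

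The next step is to upgrade this bilinear expression to the desired square. The trick is to pull back by the family of transverse diffeomorphisms $\sigma_t$ generated by the time-dependent basic vector field $\tfrac{1}{2}\nabla^T_{g_t^T}f_t$. Since $f_t$ is basic, its transverse gradient lies in the contact distribution and descends to the leaf space, so $\sigma_t$ is a well-defined transverse diffeomorphism preserving $\xi$ and the transverse holomorphic structure. Setting $\tilde g_t^T=\sigma_t^\ast g_t^T$ and $\tilde f_t=f_t\circ\sigma_t$, the Lie derivative computation $\mathcal{L}_{\tfrac12\nabla^T f_t}g_t^T=\nabla^T\overline{\nabla}^T f_t$ (valid for basic $f_t$ in the transverse K\"ahler geometry) gives the modified flow
\[
\tfrac{d}{dt}\tilde g_t^T=-\bigl(\mathrm{Ric}^T_{\tilde g_t^T}-\tilde g_t^T+\nabla^T\overline{\nabla}^T\tilde f_t\bigr).
\]
Because $\mu^T$ is invariant under transverse diffeomorphisms, $\mu^T(\tilde g_t^T)=\mu^T(g_t^T)$, and the minimizer for $\tilde g_t^T$ is $\tilde f_t$. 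Applying (\ref{52}) to this modified flow with $\delta\tilde g^T=-(\mathrm{Ric}^T_{\tilde g^T}-\tilde g^T+\nabla^T\overline{\nabla}^T\tilde f_t)$ produces the perfect square, and then pulling back by $\sigma_t^{-1}$ (a measure-preserving change of variables against $e^{-f_t}\omega^n\wedge\eta_0$) gives the stated identity.

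The main obstacle I anticipate is verifying that the diffeomorphism trick is legitimate in the Sasaki category. Concretely, one must check: (a) the flow of $\tfrac12\nabla^T f_t$ preserves $\xi$ and the transverse holomorphic structure, so that $\tilde g_t^T$ is again a transverse K\"ahler metric in the same basic class; (b) the pullback identities $\sigma_t^\ast(e^{-f_t}\omega^n\wedge\eta_0)=e^{-\tilde f_t}\tilde\omega^n\wedge\eta_0$ hold, which requires that $\sigma_t$ be volume-compatible with the chosen contact form $\eta_0$ up to the basic data; and (c) the regularity of the minimizer family $t\mapsto f_t$ is enough to differentiate, which reduces to the elliptic regularity for (\ref{51}) together with the uniform $C^0$ bound from Lemma~\ref{L31}. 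Once these transverse-geometric checks are in place, the calculation mirrors Perelman's argument verbatim, thanks to the fact that all quantities are basic and behave formally as in the K\"ahler-Ricci flow.
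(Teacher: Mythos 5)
Your proposal is correct and follows essentially the route the paper intends: the paper offers no written argument beyond ``it follows from (\ref{52})'', and the appearance of $\sigma_{t}$ in the statement shows the intended proof is precisely your combination of the envelope argument for the minimizer $f_{t}$ with Perelman's diffeomorphism trick to complete the square. The one point to tighten is that $\mathcal{L}_{\frac{1}{2}\bigtriangledown^{T}f_{t}}g_{t}^{T}$ is the full real transverse Hessian, not just $\bigtriangledown^{T}\overline{\bigtriangledown}^{T}f_{t}$; its $(2,0)+(0,2)$ part pairs to zero against the $(1,1)$-tensor $\mathrm{Ric}^{T}-g^{T}+\bigtriangledown^{T}\overline{\bigtriangledown}^{T}f_{t}$ in (\ref{52}), after which the cross term vanishes and the stated perfect square follows.
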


Next, we state some a priori estimates for the minimizing solution $f_{t}$ of
(\ref{51}) under the Sasaki-Ricci \ flow. It can be viewed as their K\"{a}hler
counterparts restricted on basic functions and transverse K\"{a}hler
structure, etc. We refer to the details estimates as in \cite[Proposition
4.2]{tzhu2} and \cite{psswe}.

At first, we can improve the estimates as in Lemma \ref{L31} if the transverse
Mabuchi $K$-energy is bounded from below.

\begin{lemma}
If in addition the transverse Mabuchi $K$-energy is bounded from below on the
space of transverse K\"{a}hler potentials as in Lemma \ref{L31}, then we have

\begin{enumerate}
\item
\[
\lim_{t\rightarrow \infty}|u(t)|_{C^{0}(M)}=0,
\]

\item
\[
\lim_{t\rightarrow \infty}|||\bigtriangledown^{T}u(t)||_{C_{B}^{0}(M,g_{t}%
^{T})}=0,
\]

\item
\[
\lim_{t\rightarrow \infty}|\Delta_{B}u(t)|_{C^{0}(M)}=0.
\]

\end{enumerate}
\end{lemma}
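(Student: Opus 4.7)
The plan is to translate the assumed lower bound on the transverse Mabuchi $K$-energy into a time-integrability statement, promote that to pointwise-in-time decay, and then upgrade $L^{2}$-type decay to $C^{0}$-type statements by interpolating against the already-established uniform higher-order bounds. First, item (2) of Lemma \ref{L50} gives
\[
\frac{d}{dt}K_{\eta_{0}}(\varphi(t))=-\int_{M}\| \nabla^{T}u(t)\|^{2}\omega(t)^{n}\wedge \eta_{0},
\]
so if $K_{\eta_{0}}$ is bounded below along the flow, then $\int_{0}^{\infty}\int_{M}\| \nabla^{T}u\|^{2}\omega(t)^{n}\wedge \eta_{0}\,dt<\infty$. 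To promote this to $I(t):=\int_{M}\| \nabla^{T}u\|^{2}\omega(t)^{n}\wedge \eta_{0}\to 0$, I would differentiate $I(t)$ using $\partial_{t}u=\Delta_{B}u+u-a(t)$ together with $\partial_{t}\omega=\omega-\mathrm{Ric}_{\omega}^{T}$; integration by parts and the uniform bounds of Lemma \ref{L31} and Proposition \ref{P41} yield $|I'(t)|\le C$, and the standard fact that an integrable nonnegative function with bounded derivative must tend to zero gives $I(t)\to 0$.

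Next I would upgrade the $L^{2}$-decay of $\nabla^{T}u$ to $C^{0}$-decay of $u$ itself. By the transverse weighted Poincar\'{e} inequality of Lemma \ref{L43} applied to $u-a(t)$, together with the definition $a(t)=V^{-1}\int_{M}u\,e^{-u}\omega^{n}\wedge \eta_{0}$, one has
\[
\int_{M}(u-a(t))^{2}e^{-u}\omega^{n}\wedge \eta_{0}\leq \int_{M}\| \nabla^{T}u\|^{2}e^{-u}\omega^{n}\wedge \eta_{0}\longrightarrow 0.
\]
The uniform $C^{0}$-bound of $u$ in Lemma \ref{L31} together with Moser iteration based on the basic Sobolev inequality (\ref{b4}) then yields $\| u(t)-a(t)\|_{C^{0}}\to 0$. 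Since $a(t)$ is monotone and bounded by (\ref{sss}), it has a limit $a_{\infty}$; passing to the limit in the identity defining $a(t)$ (using $V=\int_{M}\omega(t)^{n}\wedge \eta_{0}$ is fixed) gives $a_{\infty}=a_{\infty}e^{-a_{\infty}}$, hence $a_{\infty}=0$ and item (1) follows.

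For items (2) and (3), I would interpolate. Proposition \ref{P41} supplies uniform $L^{4}$-control on $\nabla^{T}\overline{\nabla}^{T}u$, $\nabla^{T}\nabla^{T}u$ and the third transverse covariant derivatives of $u$. Combined with $\| u\|_{C^{0}}\to 0$ from item (1) and the transverse Bochner identities, a standard $L^{p}$-iteration using (\ref{b4}) yields uniform $W_{B}^{1,p}$-control of $\nabla^{T}u$ for large $p$, hence $C_{B}^{\alpha}$-control by Sobolev embedding. A Gagliardo--Nirenberg type interpolation of the form $\| \nabla^{T}u\|_{C^{0}}^{2}\leq C\| u\|_{C^{0}}(\| \nabla^{T}\nabla^{T}u\|_{C^{0}}+\| \nabla^{T}\overline{\nabla}^{T}u\|_{C^{0}})$ then delivers item (2), and the same scheme applied to $\Delta_{B}u$, using its evolution equation $\partial_{t}\Delta_{B}u=\Delta_{B}^{2}u+\Delta_{B}u-\| \nabla^{T}\overline{\nabla}^{T}u\|^{2}$, produces item (3).

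The main obstacle is the Moser iteration step bridging $L^{2}$-decay to $C^{0}$-decay: one must argue entirely in the basic category so that the Sobolev inequality (\ref{b4}) and the transverse Bochner identities may be applied without loss, and one must verify that every constant entering the iteration is uniform along the Sasaki-Ricci flow. This is precisely where the well-formed orbifold structure of the leaf space $(Z,h,\omega_{h})$ is invoked: by (\ref{2022A}), all integral identities and $L^{p}$-estimates descend to the familiar K\"{a}hler--Ricci flow on $Z$, where the analogous estimates are established in \cite{tzhu2} and \cite{psswe}.
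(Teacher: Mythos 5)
The paper gives no proof of this lemma at all: it is stated as the transverse transplant of \cite[Proposition 4.2]{tzhu2} and of the smoothing lemmas of \cite{psswe}, so your proposal has to be measured against that standard argument. For item (1) your skeleton is the right one (energy lower bound $\Rightarrow$ $\int_{0}^{\infty}\int_{M}\|\nabla^{T}u\|^{2}\,dt<\infty$ $\Rightarrow$ $I(t)\to 0$ via $|I'(t)|\le C$ $\Rightarrow$ $\|u-a\|_{C^{0}}\to 0$ $\Rightarrow$ $a_{\infty}=0$), but the bridge from $L^{2}$ to $C^{0}$ is misattributed: Moser iteration applied to $\Delta_{B}(u-a)=n-R^{T}$ cannot yield decay, since the right-hand side is only uniformly bounded, not small. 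The correct elementary bridge uses the uniform bound $\|\nabla^{T}u\|_{C^{0}}\le C$ of Lemma \ref{L31} together with Perelman's non-collapsing (Lemma \ref{L61}): if $\sup|u-a|=m$, then $|u-a|\ge m/2$ on a transverse ball of radius comparable to $m$, whose volume is at least $cm^{2n}$, so $\|u-a\|_{L^{2}}^{2}\ge cm^{2n+2}$ and $m\to 0$. This is a fixable but real slip.

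The genuine gap is in items (2) and (3). The interpolation $\|\nabla^{T}u\|_{C^{0}}^{2}\le C\|u\|_{C^{0}}(\|\nabla^{T}\nabla^{T}u\|_{C^{0}}+\|\nabla^{T}\overline{\nabla}^{T}u\|_{C^{0}})$ needs a uniform $C^{0}$ bound on the full transverse Hessian, which is precisely what is not available: Lemma \ref{L31} controls only the trace $\Delta_{B}u$ in $C^{0}$, and Proposition \ref{P41} controls the Hessian only in $L^{4}$. Your proposed repair --- extracting $C^{\alpha}$ control of $\nabla^{T}\nabla^{T}u$ from the $L^{4}$ bounds on third derivatives by Sobolev embedding --- fails dimensionally: $W^{1,4}\hookrightarrow C^{0}$ only when the relevant (transverse) dimension is below $4$, whereas here $2n=4$ or $6$. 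The step actually required, and supplied in \cite{psswe}, is parabolic rather than elliptic: a maximum-principle smoothing estimate applied to test quantities such as $(t-t_{0})\|\nabla^{T}u\|^{2}+Au^{2}$ (and an analogue for $\Delta_{B}u$), using the evolution identities $(\partial_{t}-\Delta_{B})\|\nabla^{T}u\|^{2}=\|\nabla^{T}u\|^{2}-\|\nabla^{T}\nabla^{T}u\|^{2}-\|\nabla^{T}\overline{\nabla}^{T}u\|^{2}$ and $(\partial_{t}-\Delta_{B})\Delta_{B}u=\Delta_{B}u-\|\nabla^{T}\overline{\nabla}^{T}u\|^{2}$, which converts $C^{0}$-smallness of $u$ at time $t_{0}$ into $C^{0}$-smallness of $\nabla^{T}u$ and $\Delta_{B}u$ at times $t_{0}+1$ and $t_{0}+2$. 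Without this (or an equivalent substitute) your argument does not establish items (2) and (3).
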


As a consequence, we have

\begin{corollary}
\label{C52} If the transverse Mabuchi $K$-energy is bounded from below on the
space of transverse K\"{a}hler potentials as in Lemma \ref{L31}, then there
exists a sequence of $t_{i}\in \lbrack i,i+1]$ such that

\begin{enumerate}
\item
\[
\lim_{t_{i}\rightarrow \infty}|\Delta_{B}f_{t_{i}}|_{L_{B}^{2}(M,g_{t_{I}}%
^{T})}=0,
\]

\item
\[
\lim_{t_{i}\rightarrow \infty}|||\bigtriangledown^{T}f_{t_{i}}||_{L_{B}%
^{2}(M,g_{t_{I}}^{T})}=0,
\]

\item
\[
\lim_{t_{i}\rightarrow \infty}%
{\displaystyle \int \limits_{M}}
f_{t_{i}}e^{-f_{t_{i}}}\omega_{g_{t_{I}}^{T}}^{n}\wedge \eta_{0}=0.
\]
Moreover, we have
\begin{equation}
\lim_{t\rightarrow \infty}\mu^{T}(g_{t}^{T})=(2\pi)^{-n}(2n)V=\sup \{ \mu
^{T}(\overline{g}^{T}):\overline{g}^{T}\in c_{1}^{B}(M)\}. \label{55}%
\end{equation}

\end{enumerate}
\end{corollary}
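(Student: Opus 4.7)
The plan is to exploit the monotonicity identity \ref{54} together with the $C^{0}$-decay of the transverse Ricci potential $u(t)$ provided by the preceding lemma. First I would observe that $\mu^{T}(g_{t}^{T})$ is non-decreasing along the Sasaki-Ricci flow by \ref{54} and bounded from above by Lemma \ref{L51}(1); hence $\lim_{t\to\infty}\mu^{T}(g_{t}^{T})$ exists, and integrating \ref{54} over $[0,\infty)$ yields
\[
\int_{0}^{\infty}\int_{M}\|Ric^{T}_{g_{t}^{T}}-g_{t}^{T}+\nabla^{T}\overline{\nabla}^{T}f_{t}\|^{2}e^{-f_{t}}\omega^{n}(g_{t}^{T})\wedge\eta_{0}\,dt<\infty.
\]
Thus one can extract a sequence $t_{i}\in[i,i+1]$ along which the spatial integrand tends to zero; using the Sasaki-Ricci flow identity $Ric^{T}-g^{T}=-\nabla^{T}\overline{\nabla}^{T}u$, this becomes $\int_{M}|\nabla^{T}\overline{\nabla}^{T}(f_{t_{i}}-u(t_{i}))|^{2}e^{-f_{t_{i}}}\omega^{n}\wedge\eta_{0}\to 0$.

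Next I would establish a uniform $C^{0}$-bound on $f_{t_{i}}$. Plugging the shifted potential $\tilde{u}:=u(t_{i})+c_{i}$, with $c_{i}$ chosen so that $\int_{M}e^{-\tilde{u}}\omega^{n}\wedge\eta_{0}=V$ and noting $c_{i}\to 0$ since $|u(t_{i})|_{C^{0}}\to 0$, as a test function into $\mathcal{W}^{T}$ and invoking the minimizing property of $f_{t_{i}}$ yields the two-sided bound $\mu^{T}(g_{t_{i}}^{T})\leq\mathcal{W}^{T}(g_{t_{i}}^{T},\tilde{u})=(2\pi)^{-n}(2n)V+o(1)$. Feeding this into the Euler-Lagrange equation \ref{51}, I would invoke the uniform Sobolev inequality \ref{b4} and Moser iteration on basic functions, following the scheme of \cite{psswe}, to conclude that $\|f_{t_{i}}\|_{C^{0}}$ is uniformly bounded. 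This bound transfers the weighted $L^{2}$ decay to an unweighted one, producing statement (1).

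For (2) and (3), multiplying \ref{51} by $e^{-f_{t_{i}}}$ and integrating by parts gives the identity
\[
\tfrac{1}{2}\int_{M}|\nabla^{T}f_{t_{i}}|^{2}e^{-f_{t_{i}}}\omega^{n}\wedge\eta_{0}=\mu^{T}(g_{t_{i}}^{T})V-\int_{M}f_{t_{i}}e^{-f_{t_{i}}}\omega^{n}\wedge\eta_{0}-\tfrac{1}{2}\int_{M}R^{T}e^{-f_{t_{i}}}\omega^{n}\wedge\eta_{0}.
\]
Combined with $R^{T}(g_{t_{i}}^{T})\to 2n$ in $C^{0}$ (which follows from $\Delta_{B}u(t_{i})\to 0$), the asymptotic $\mu^{T}(g_{t_{i}}^{T})\to(2\pi)^{-n}(2n)V$ from the previous step, and the $C^{0}$-bound on $f_{t_{i}}$, this forces the residual terms to vanish in the limit, giving both (2) and (3). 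For the supremum formula \ref{55}, the upper bound is precisely the test-function computation above, while the reverse inequality is realised along $\{t_{i}\}$ because the gradient Sasaki-Ricci soliton equation is asymptotically satisfied in $L^{2}$ by the soliton limit produced in Theorem \ref{T63}.

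The hard part will be the second paragraph: transferring the weighted $L^{2}$ control into an unweighted one via a uniform $C^{0}$-bound on the minimiser $f_{t}$. In the K\"{a}hler case (see \cite{tzhu2} and \cite{psswe}) this is handled by Moser iteration coupled with monotonicity of Perelman's $\mathcal{W}$-functional. In the present Sasaki setting the same argument adapts because every quantity involved is basic and can be pushed down to the orbifold quotient $(Z,h,\omega_{h})$ via the integration formula \ref{2022A}, where the K\"{a}hler analysis transfers verbatim; a little care is needed at the codimension-two orbifold singularities of $Z$, but these form measure-zero sets and do not affect the integration-by-parts identities.
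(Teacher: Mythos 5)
The paper itself offers no proof of this corollary beyond the citation of \cite[Proposition 4.2]{tzhu2} and \cite{psswe}, so your attempt has to be measured against the argument in those references. Your overall architecture --- integrate the monotonicity formula (\ref{54}) against the upper bound of Lemma \ref{L51} to extract $t_i\in[i,i+1]$ with $\int_M\|\mathrm{Ric}^T-g^T+\nabla^T\overline{\nabla}^Tf_{t_i}\|^2e^{-f_{t_i}}\,\omega^n\wedge\eta_0\to 0$, obtain a uniform $C^0$ bound on the minimisers by Moser iteration with the Sobolev inequality (\ref{b4}) pushed down to the orbifold quotient, and then exploit the Euler--Lagrange equation (\ref{51}) --- is the correct framework, and your derivation of item (1) (trace the Hessian decay of $f_{t_i}-u(t_i)$, remove the weight using the $C^0$ bound, and use $\Delta_Bu(t_i)\to 0$) is sound.

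The gap is in your derivation of items (2) and (3). Your integrated Euler--Lagrange identity, after using $R^T\to\mathrm{const}$ and (granting) $\mu^T(g^T_{t_i})\to(2\pi)^{-n}(2n)V$, reduces to $\tfrac{1}{2}\int_M|\nabla^Tf_{t_i}|^2e^{-f_{t_i}}\omega^n\wedge\eta_0+\int_Mf_{t_i}e^{-f_{t_i}}\omega^n\wedge\eta_0\to 0$. But under the normalisation $\int_Me^{-f}\omega^n\wedge\eta_0=V$ one has $\int_Mfe^{-f}\omega^n\wedge\eta_0\le 0$ by Jensen's inequality (exactly as the paper notes for $a(t)$ in (\ref{sss})), while the gradient term is non-negative; a sum of a non-negative and a non-positive quantity tending to zero forces neither summand to vanish, and a uniform $C^0$ bound on $f_{t_i}$ does not repair this. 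Moreover, at that stage you only possess the upper bound $\mu^T(g^T_{t_i})\le(2\pi)^{-n}(2n)V+o(1)$ from the test function $\tilde u$; the matching lower bound is essentially equivalent to items (2)--(3), and it cannot be imported from Theorem \ref{T63}, because the entropy of the limiting gradient soliton equals the Einstein value only when that soliton is trivial --- which is precisely what this corollary is later used to establish. The missing step, which is the content of \cite[Proposition 4.2]{tzhu2}, is to convert the weighted $L^2$ smallness of $\nabla^T\overline{\nabla}^T(f_{t_i}-u(t_i))$ coming from (\ref{54}) into $L^2$ smallness of $\nabla^T(f_{t_i}-u(t_i))$ by a weighted Bochner--Poincar\'e argument (using the a priori bounds on $f_{t_i}$ and $u(t_i)$); this yields (2) directly since $\nabla^Tu(t_i)\to 0$ uniformly, after which your Euler--Lagrange identity does give (3), and (\ref{55}) then follows by combining the resulting lower bound for $\mu^T(g^T_{t_i})$ with your upper bound and the monotonicity of $\mu^T$.
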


\subsection{Transverse $K$-Stability}

Following the notions as in \cite{t3} and \cite{t5}, we define the Sasaki
analogue of a $K$-stable Fano K\"{a}hler manifold on a compact quasi-regular
transverse Fano Sasakian $(2n+1)$-manifold $(M,\xi,\eta_{0},g_{0})$ with the
space $(Z=M/\mathcal{F}_{\xi},h_{0},\omega_{h_{0}})$ of leaves of the
characteristic foliation which is well-formed. Then $Z$ is a normal projective
variety with codimension two orbifold singularities (\cite{clw}). In general,
there is a so-called orbifold stability in a a compact quasi-regular Sasakian
manifold in the sense of Ross-Thomas (\cite{rt}). We also refer to
\cite[section $3$]{cz1} for the application.

By applying first structure theory for a quasi-regular Sasakian manifold,
there exists a Riemannian submersion, $\mathbf{S}^{1}$-orbibundle
$\pi:M\rightarrow Z,$ such that
\[
K_{M}^{T}=\pi^{\ast}(K_{\emph{Z}}^{orb})=\pi^{\ast}(\varphi^{\ast}K_{\emph{Z}%
}).
\]
Then by the CR Kodaira embedding theorem (\cite{rt}, \cite{hlm}), there exists
an embedding
\[
\Psi:M\rightarrow(\mathbf{CP}^{N},\omega_{FS})
\]
defined by the basic transverse holomorphic section $\{s_{0},s_{1},...s_{N}\}
$ of $H_{B}^{0}(M,(K_{M}^{T})^{-m})$ which is $\mathbf{S}^{1}$-equivariant
with respect to the weighted $\mathbf{C}^{\ast}$-action in $\mathbf{C}^{N+1} $
with $N=\dim H_{B}^{0}(M,(K_{M}^{T})^{-m})-1$ for a large positive integer
$m.$ In fact, in our situation $Z$ is also normal Fano, there is an embedding
\[
\psi_{|mK_{\emph{Z}}^{-1}|}:Z\rightarrow \mathcal{P(}H^{0}(Z,K_{\emph{Z}}%
{}^{-m})).
\]
Define
\[
\Psi_{|m(K_{M}^{T})^{-1}|}=\psi_{|mK_{\emph{Z}}^{-1}|}\circ \pi
\]
such that
\[
\Psi_{|m(K_{M}^{T})^{-1}|}:M\rightarrow \mathcal{P(}H_{B}^{0}(M,(K_{M}%
^{T})^{-m}).
\]

We define
\[
\mathcal{D}iff^{T}(M)=\{ \sigma \in \mathcal{D}iff(M)\text{ \ }|\text{ }%
\sigma_{\ast}\xi=\xi \text{ \  \  \textrm{and} \ }\sigma^{\ast}g^{T}%
=(\sigma^{\ast}g)^{T}\}
\]
and
\[
SL^{T}(N+1,\mathbf{C})=SL(N+1,\mathbf{C})\cap \mathcal{D}iff^{T}(M).
\]

Any other basis of $H_{B}^{0}(M,(K_{M}^{T})^{-m})$ gives an embedding of the
form $\sigma^{T}\circ \Psi_{|m(K_{M}^{T})^{-1}|}$ with $\sigma^{T}\in
SL^{T}(N+1,\mathbf{C}).$ Now for any subgroup of the weighted $\mathbf{C}%
^{\ast}$action $G_{0}=\{ \sigma^{T}(t)\}_{t\in \mathbf{C}^{\ast}}$ of
$SL^{T}(N+1,\mathbf{C}),$ there is a unique limiting%
\[
M_{\infty}=\lim_{t\rightarrow0}\sigma^{T}(t)(M)\subset \mathbf{CP}^{N}.
\]
As our application of Theorem \ref{T66}, $M_{\infty}$ has its leave space
$Z_{\infty}=M_{\infty}/\mathcal{F}_{\xi}$ which is a normal projective
K\"{a}hler orbifold with at worst codimension two orbifold singularities
$\Sigma_{\infty}.$

Let $V$ be a the Hamiltonian holomorphic vector field whose real part
generates the action by $\sigma^{T}(e^{-s})$. As the previous discussion and
Theorem \ref{T2022}, if $Z_{\infty}$ is normal Fano, there is a generalized
Futaki invariant defined by $f_{Z_{\infty}}(X)$ and then a generalized
Sasaki-Futaki invariant defined by $f_{M_{\infty}}(V)$ as in (\ref{59}),
(\ref{59-3}) and (\ref{59-4}). Thus one can introduce the Sasaki analogue of
the $K$-stable on K\"{a}hler manifolds (\cite{t3}, \cite{t5}, \cite{d}) :

\begin{definition}
\label{d61}Let $(M,\xi,\eta,g,\omega)$ be a compact transverse Fano
quasi-regular Sasakian manifold and its leave space $(Z,h,\omega_{h})$ be a
normal Fano projective K\"{a}hler orbifold and well-formed. We say that $M$ is
transverse $K$-stable with respect to $(K_{M}^{T})^{-m}$ if the generalized
Sasaki-Futaki invariant
\[
\operatorname{Re}f_{M_{\infty}}(V)\geq0\text{ \  \  \textrm{or} \  \ }%
\operatorname{Re}f_{Z_{\infty}}(X)\geq0
\]
for any weighted $\mathbf{C}^{\ast}$-action $G_{0}=\{ \sigma^{T}%
(t)\}_{t\in \mathbf{C}^{\ast}}$ of $SL^{T}(N+1,\mathbf{C})$ with a normal Fano
$Z_{\infty}=M_{\infty}/\mathcal{F}_{\xi}$ and the equality holds if and only
if $M_{\infty}$ is transverse biholomorphic to $M.$ We say that $M$ is
transverse $K$-stable if it is transverse $K$-stable for all large positive
integer $m$.
\end{definition}

With Definition \ref{d61} in mind, we are ready to show that the transverse
Mabuchi $K$-energy is bounded from below under the Sasaki-Ricci flow. This is
served as the Sasaki analogue of the K\"{a}hler-Ricci flow due to Tian-Zhang
(\cite{tz}).

\begin{theorem}
Let $(M,\xi,\eta_{0},g_{0})$ be a compact quasi-regular transverse Fano
Sasakian manifold of dimension up to seven and $(Z_{0}=M/\mathcal{F}_{\xi
},h_{0},\omega_{h_{0}})$ denote the space of leaves of the characteristic
foliation which is a normal projective K\"{a}hler orbifold with codimension
two orbifold singularities $\Sigma_{0}$. If $M$ is transverse $K$-stable, then
the transverse Mabuchi $K$-energy is bounded from below under the Sasaki-Ricci
flow%
\begin{equation}
K_{\eta_{0}}(\omega_{0},\omega_{t})\geq-C(g_{0}). \label{54-b}%
\end{equation}

\end{theorem}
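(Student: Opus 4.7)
The plan is to follow the Tian--Zhang strategy for the K\"ahler--Ricci flow, adapted to the Sasaki setting via the first structure theorem. The starting point is the monotonicity formula from Lemma \ref{L50}:
\[
\frac{d}{dt}K_{\eta_{0}}(\omega_{t})=-\int_{M}\|\nabla^{T}u(t)\|^{2}\,\omega(t)^{n}\wedge\eta_{0}\leq 0,
\]
so the transverse Mabuchi $K$-energy is non-increasing along the Sasaki--Ricci flow. Arguing by contradiction, suppose $K_{\eta_{0}}(\omega_{0},\omega_{t})\to-\infty$ as $t\to\infty$.

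By Theorem \ref{T66}, along a subsequence the flow converges in the Cheeger--Gromov sense to a compact quasi-regular transverse Fano Sasakian orbifold $(M_{\infty},\xi,\eta_{\infty},g_{\infty})$ whose transverse metric $g_{\infty}^{T}$ is a gradient Sasaki--Ricci soliton with basic potential $u_{\infty}$, and whose leaf space $Z_{\infty}$ is a normal Fano projective K\"ahler orbifold with codimension-two orbifold singularities. By Theorem \ref{T65}, the partial $C^{0}$-estimate holds uniformly in $t$: for $m$ large, the basic embeddings $\Psi_{t}\colon M\to\mathbf{CP}^{N_{m}}$ defined by orthonormal bases of $H_{B}^{0}(M,(K_{M}^{T})^{-m})$ are uniformly controlled. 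Combined with the first structure theorem and the $S^{1}$-equivariance of these embeddings, one extracts a weighted one-parameter subgroup $\{\sigma^{T}(s)\}_{s\in\mathbf{C}^{\ast}}$ of $SL^{T}(N_{m}+1,\mathbf{C})$ for which
\[
M_{\infty}=\lim_{s\to 0}\sigma^{T}(s)(\Psi_{0}(M))\subset\mathbf{CP}^{N_{m}},
\]
i.e.\ a transverse test configuration in the sense of Definition \ref{d61} whose central fibre is $M_{\infty}$.

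Let $V$ be the Hamiltonian holomorphic vector field on $M_{\infty}$ whose real part generates $\{\sigma^{T}(s)\}$; it is precisely the soliton vector field of $g_{\infty}^{T}$, with Hamiltonian potential $u_{V}$ proportional to $u_{\infty}$. The gradient soliton equation (\ref{53}) combined with the reformulation (\ref{59-1}) of the generalized Sasaki--Futaki invariant then gives
\[
f_{M_{\infty}}(V)=-\int_{M_{\infty}}\|\nabla^{T}u_{\infty}\|^{2}\,\omega_{\infty}^{n}\wedge\eta_{\infty}\leq 0.
\]
By transverse $K$-stability, $\mathrm{Re}\,f_{M_{\infty}}(V)\geq 0$, so equality must hold. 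This forces $u_{\infty}$ to be constant, so $g_{\infty}^{T}$ is in fact Sasaki--Einstein, and the equality clause of Definition \ref{d61} yields that $M_{\infty}$ is transverse biholomorphic to $M$.

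Consequently $M$ itself admits a Sasaki--Einstein metric in the basic class $c_{1}^{B}(M)$, hence by Corollary \ref{C52} and the Perelman-type equality (\ref{55}) the $\mu^{T}$-functional attains its supremum along the flow. A Sasaki analogue of the Bando--Mabuchi--Tian lower-bound argument then yields $K_{\eta_{0}}(\omega_{0},\omega_{t})\geq-C(g_{0})$, contradicting the assumed unboundedness and completing the proof. The most delicate step is the second paragraph: genuinely upgrading the analytic Cheeger--Gromov convergence to an algebraic one-parameter subgroup $\{\sigma^{T}(s)\}$ and matching its generalized Donaldson--Futaki invariant with the soliton identity $-\int\|\nabla^{T}u_{\infty}\|^{2}$. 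This is exactly where the uniform partial $C^{0}$-estimate (Theorem \ref{T65}), the $S^{1}$-equivariance of the basic embeddings, and the first structure theorem for quasi-regular Sasakian manifolds are all indispensable.
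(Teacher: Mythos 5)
Your proposal inverts the paper's logic: the paper proves the lower bound of $K_{\eta_{0}}$ \emph{directly} and only afterwards (in the corollary) deduces that the limit is Sasaki--Einstein and isomorphic to $M$, whereas you try to establish the Einstein limit first and then read off the bound. The paper's actual argument is: by the monotonicity (\ref{31}) it suffices to bound $K_{\eta_{0}}(\omega_{0},\omega_{t_{i}})$ below; one introduces the Bergman metrics $\overline{\omega}_{i}=\frac{1}{m}\Psi_{i}^{\ast}(\omega_{FS})$ with $\Psi_{i}=G_{i}\circ\Psi_{1}$, $G_{i}\in SL^{T}(N_{m}+1,\mathbf{C})$; transverse $K$-stability enters only through the Paul--Tian--Zhang lower bound $K_{\eta_{0}}(\overline{\omega}_{1},\overline{\omega}_{i})\geq-C$ on the $SL^{T}$-orbit of Bergman metrics; the $1$-cocycle identity reduces everything to the upper bound $K_{\eta_{0}}(\omega_{i},\overline{\omega}_{i})\leq C$, which follows from the explicit formula for the $K$-energy, Perelman's estimate $|u(t_{i})|\leq C$, and the partial $C^{0}$-estimate giving $\overline{\omega}_{i}\leq C\omega_{i}$. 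No soliton convergence or Futaki-invariant computation is needed at this stage.

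Your route has two genuine gaps. First, the step you yourself flag as delicate is in fact missing: the partial $C^{0}$-estimate together with $\Psi_{i}=G_{i}\circ\Psi_{1}$ only places $M_{\infty}$ in the \emph{closure of the full $SL^{T}(N_{m}+1,\mathbf{C})$-orbit} of $\Psi_{1}(M)$; producing a weighted one-parameter subgroup $\{\sigma^{T}(s)\}$ whose central fibre is $M_{\infty}$ (so that Definition \ref{d61} even applies and $f_{M_{\infty}}(V)$ is the invariant of an admissible degeneration) is a Hilbert--Mumford/Chen--Sun--Wang type theorem that is neither proved in the paper nor supplied by you; without it the inequality $\operatorname{Re}f_{M_{\infty}}(V)\geq0$ cannot be invoked. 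Second, your closing step is circular as written: Corollary \ref{C52} and the identity (\ref{55}) are stated under the hypothesis that the transverse Mabuchi $K$-energy is already bounded below, which is exactly what you are trying to prove; and the alternative exit, a Sasaki analogue of the Bando--Mabuchi lower-bound theorem, is a substantial result that appears nowhere in the paper. (There is also a sign/normalization issue to check in the identity $f_{M_{\infty}}(V)=-\int\|\nabla^{T}u_{\infty}\|^{2}$ versus the direction of the degeneration in Definition \ref{d61}, but that is minor compared with the two structural gaps.) I would recommend redoing the proof along the paper's cocycle-plus-Bergman-metric scheme, where the $K$-stability hypothesis is consumed only once, through the lower bound of the $K$-energy on the orbit $\{\overline{\omega}_{i}\}$.
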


\begin{proof}
First it follows from (\ref{31}) that $K_{\eta_{0}}(\omega_{0},\omega_{t}) $
is non-increasing in $t$. So it suffices to show a uniform lower bound of
\begin{equation}
K_{\eta_{0}}(\omega_{0},\omega_{t_{i}})\geq-C. \label{56}%
\end{equation}
Now if $M$ is transverse $K$-stable, we fix an integer $m>0$ sufficiently
large such that $(L^{T})^{-m}$ is very-ample. Then for any orthonormal basic
basis $\{ \sigma_{t_{i},m,k}\}_{k=0}^{N_{m}}$ in $H_{B}^{0}(M,(K_{M}^{T}%
)^{-m})$ with $N_{m}=\dim H_{B}^{0}(M,(K_{M}^{T})^{-m})-1$ at $t_{i}$ , we can
define the $S^{1}$-equivariant embedding (\cite{chlw}) with respective the
weighted $\mathbf{C}^{\ast}$-action in $\mathbf{C}^{N_{m}+1}$
\[
\Psi_{i}:M\rightarrow(\mathbf{CP}^{N_{m}},\omega_{FS})
\]
with the Bergman metric $\overline{\omega}_{t_{i}}:=\overline{\omega}%
_{i}=\frac{1}{m}\Psi_{i}^{\ast}(\omega_{FS})$ so that for any $i\geq1,$, there
exists a $G_{i}\in SL^{T}(N_{m}+1,\mathbf{C})$ such that
\[
\Psi_{i}=G_{i}\circ \Psi_{1}.
\]
At first as in the K\"{a}hler case (\cite{paul1}, \cite{tz}), the Sasaki
analogue of Mabuchi $K$-energy will have a lower bound on $\overline{\omega
}_{i}$ with \ a fixed $\overline{\omega}_{1}$
\[
K_{\eta_{0}}(\overline{\omega}_{1},\overline{\omega}_{i})\geq-C.
\]
By the $1$-cocycle condition of the transverse Mabuchi $K$-energy,%
\[
K_{\eta_{0}}(\omega_{0},\omega_{i})+K_{\eta_{0}}(\omega_{i},\overline{\omega
}_{i})=K_{\eta_{0}}(\omega_{0},\overline{\omega}_{i})=K_{\eta_{0}}(\omega
_{0},\overline{\omega}_{1})+K_{\eta_{0}}(\overline{\omega}_{1},\overline
{\omega}_{i})
\]
and then
\[
K_{\eta_{0}}(\omega_{0},\omega_{i})+K_{\eta_{0}}(\omega_{i},\overline{\omega
}_{i})\geq-C.
\]
Hence, to show (\ref{56}), we only need to get an upper bound for
\begin{equation}
K_{\eta_{0}}(\omega_{i},\overline{\omega}_{i})\leq C. \label{57}%
\end{equation}

For a fixed $m,$ we define
\[
\overline{\mathcal{\rho}}_{i}(x):=\frac{1}{m}\mathcal{\rho}_{t_{i}%
,m}(x):=\frac{1}{m}\mathcal{F}_{m}(x,t_{i}).
\]
Here $\mathcal{F}_{m}(x,t_{i})$ as in (\ref{58}). Then
\[
\omega_{i}=\overline{\omega}_{i}+\sqrt{-1}\partial_{B}\overline{\partial}%
_{B}\overline{\mathcal{\rho}}_{i}.
\]

It follows from \cite{t4} that the transverse Mabuchi $K$-energy has the
following explicit expression%
\begin{equation}%
\begin{array}
[c]{ccl}%
K_{\eta_{0}}(\omega_{i},\overline{\omega}_{i}) & = &
{\displaystyle \int \limits_{M}}
\log \frac{\overline{\omega}_{i}^{n}}{\omega_{i}^{n}}\overline{\omega}_{i}%
^{n}\wedge \eta_{0}+%
{\displaystyle \int \limits_{M}}
u(\overline{\omega}_{i}{}^{n}-\omega_{i}^{n})\wedge \eta_{0}\\
&  & -i\sum_{k=0}^{n-1}\frac{n-k}{n+1}%
{\displaystyle \int \limits_{M}}
(\partial_{B}\overline{\mathcal{\rho}}_{i}\wedge \overline{\partial}%
_{B}\overline{\mathcal{\rho}}_{i}\wedge \omega_{i}^{k}\wedge \overline{\omega
}_{i}^{n-k-1})\wedge \eta_{0}\\
& \leq &
{\displaystyle \int \limits_{M}}
\log \frac{\overline{\omega}_{i}^{n}}{\omega_{i}^{n}}\overline{\omega}_{i}%
^{n}\wedge \eta_{0}+%
{\displaystyle \int \limits_{M}}
u(\overline{\omega}_{i}{}^{n}-\omega_{i}^{n})\wedge \eta_{0}\\
& \leq &
{\displaystyle \int \limits_{M}}
\log \frac{\overline{\omega}_{i}^{n}}{\omega_{i}^{n}}\overline{\omega}_{i}%
^{n}\wedge \eta_{0}+C.
\end{array}
\label{57a}%
\end{equation}
Here $u$ is the transverse Ricci potential under the Sasaki-Ricci flow and we
have used the Perelman estimate that%
\[
|u(t_{i})|\leq C.
\]
On the other hand, it follows from (\ref{d1}) and (\ref{d11}) that%
\[
\overline{\omega}_{i}\leq C\omega_{i}.
\]
Therefore (\ref{57a}) implies (\ref{57}) and then we are done.
\end{proof}

\begin{corollary}
Let $(M,\xi,\eta_{0},g_{0})$ be a compact quasi-regular transverse Fano
Sasakian manifold of dimension up to seven and $(Z_{0}=M/\mathcal{F}_{\xi
},h_{0},\omega_{h_{0}})$ be the space of leaves of the characteristic
foliation which is a normal Fano projective K\"{a}hler orbifold with
codimension two orbifold singularities. If $M$ is transverse $K$-stable, then
under the Sasaki-Ricci flow, $M(t)$ converges to a compact transverse Fano
Sasakian manifold $M_{\infty}$ which is isomorphic to $M$ endowed with a
smooth Sasaki--Einstein metric.
\end{corollary}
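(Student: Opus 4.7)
The plan is to combine the convergence of Theorem \ref{T66} with the lower bound (\ref{54-b}) of the transverse Mabuchi $K$-energy and transverse $K$-stability (Definition \ref{d61}) to force the limiting Sasaki-Ricci soliton to be trivial, hence Sasaki--Einstein, and the limit to be transverse biholomorphic to $M$.

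First we apply Theorem \ref{T66} to produce the Cheeger--Gromov limit $(M_{\infty},\xi,\eta_{\infty},g_{\infty})$, a compact quasi-regular transverse Fano Sasakian orbifold whose leaf space $Z_{\infty}=M_{\infty}/\mathcal{F}_{\xi}$ is a normal Fano projective K\"ahler orbifold with at worst codimension-two singularities, together with a smooth gradient Sasaki-Ricci soliton $g_{\infty}^{T}=\pi^{\ast}h_{\infty}$ on the regular locus satisfying
\[
\mathrm{Ric}^{T}(g_{\infty}^{T})+\nabla^{T}\bar{\nabla}^{T}f_{\infty}=g_{\infty}^{T}.
\]
Because (\ref{54-b}) now holds, Corollary \ref{C52} yields $\mu^{T}(g_{t}^{T})\to(2\pi)^{-n}(2n)V$ along a sequence $t_{i}\to\infty$ together with $\|\nabla^{T}f_{t_{i}}\|_{L_{B}^{2}}\to 0$, and these properties transfer smoothly to $f_{\infty}$ on the regular set via Theorem \ref{T64}.

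Next, setting $V_{\infty}=\nabla_{g_{\infty}^{T}}^{1,0}f_{\infty}$, the real part of $V_{\infty}$ generates a one-parameter subgroup of transverse automorphisms of $M_{\infty}$; by the partial $C^{0}$-estimate of Theorem \ref{T65} and the Sasakian Kodaira embedding (Proposition \ref{PCR}), this subgroup is realized inside $SL^{T}(N_{m}+1,\mathbf{C})$ for $m$ large, and $M_{\infty}$ appears as the unique flat limit $\lim_{t\to 0}\sigma^{T}(t)(M)$ in $\mathbf{CP}^{N_{m}}$. Since $Z_{\infty}$ is normal Fano, Theorem \ref{T2022} provides the generalized Sasaki-Futaki invariant $f_{M_{\infty}}(V_{\infty})=f_{Z_{\infty}}(\pi_{\ast}V_{\infty})$, and a direct computation using the soliton equation and the reformulation (\ref{59-1}) gives
\[
\mathrm{Re}\,f_{M_{\infty}}(V_{\infty})=-\int_{M_{\infty}}\|\nabla^{T}f_{\infty}\|^{2}\,\omega_{\infty}^{n}\wedge\eta_{\infty}\le 0.
\]
Transverse $K$-stability forces $\mathrm{Re}\,f_{M_{\infty}}(V_{\infty})\ge 0$, so both bounds are equalities: the $L^{2}$ equality gives $\nabla^{T}f_{\infty}\equiv 0$, which collapses the soliton equation to $\mathrm{Ric}^{T}(g_{\infty}^{T})=g_{\infty}^{T}$, while the equality clause in Definition \ref{d61} forces $M_{\infty}$ to be transverse biholomorphic to $M$. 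Pulling $g_{\infty}^{T}$ back along this isomorphism endows $M$ with a smooth Sasaki--Einstein metric, completing the proof.

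The main obstacle lies in the second step: verifying that the soliton vector field $V_{\infty}$ really corresponds to an $\mathbf{S}^{1}$-equivariant $\mathbf{C}^{\ast}$-subgroup of $SL^{T}(N_{m}+1,\mathbf{C})$ in the exact form required by Definition \ref{d61}, and that $Z_{\infty}$ is genuinely normal Fano so that Theorem \ref{T2022} applies. Concretely one must promote the partial $C^{0}$-estimate and the smooth convergence on the regular set to a uniform $\mathbf{S}^{1}$-equivariant Bergman embedding, and then invoke the first structure theorem together with the algebraicity arguments of \cite{ds} and \cite{t2} to show that the Gromov--Hausdorff limit $Z_{\infty}$ is a normal projective variety whose codimension-two orbifold singularities do not obstruct the Weil-divisor computation behind $f_{Z_{\infty}}$; the well-formedness and codimension-two hypotheses on $Z_{0}$ are needed precisely so that (\ref{orbifold}) reduces to $K_{Z_{\infty}}^{\mathrm{orb}}=\varphi^{\ast}K_{Z_{\infty}}$ and the fibre integration in the proof of Theorem \ref{T2022} goes through on the singular limit.
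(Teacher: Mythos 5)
Your proposal follows the same overall skeleton as the paper's proof --- Theorem \ref{T66} for the convergence, the lower bound (\ref{54-b}) of the transverse Mabuchi $K$-energy deduced from transverse $K$-stability, and the generalized Sasaki--Futaki invariant combined with Definition \ref{d61} to identify $M_{\infty}$ with $M$ --- but you conclude the Einstein property by a different mechanism. The paper gets ``$M_{\infty}$ is Sasaki--Einstein'' directly from Corollary \ref{C52} together with (\ref{54-b}) and the monotonicity formula (\ref{54}): once the Mabuchi energy is bounded below, the transverse Ricci potential degenerates to a constant, so the soliton equation collapses to $\mathrm{Ric}^{T}(g_{\infty}^{T})=g_{\infty}^{T}$ before $K$-stability is invoked a second time; the contradiction is then that the Futaki invariant of the \emph{Einstein} limit vanishes for the generator $V$ of the degeneration, whereas Definition \ref{d61} would force $\operatorname{Re}f_{M_{\infty}}(V)>0$ if $M_{\infty}\neq M$. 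You instead evaluate the invariant on the soliton field $V_{\infty}=\nabla^{1,0}f_{\infty}$, obtain $\operatorname{Re}f_{M_{\infty}}(V_{\infty})=-\int_{M_{\infty}}\|\nabla^{T}f_{\infty}\|^{2}\,\omega_{\infty}^{n}\wedge\eta_{\infty}\leq 0$ from (\ref{59-1}) and the trace of the soliton equation, and let $K$-stability squeeze this to zero. That is a legitimate classical computation, but two remarks are in order. First, it is partly redundant: once you cite Corollary \ref{C52} (and the lemma preceding it) you already have $\|\nabla^{T}u(t)\|_{C^{0}}\to 0$, so the limit potential is constant and the soliton is trivial with no Futaki computation needed. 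Second, and more substantively, applying Definition \ref{d61} to $V_{\infty}$ requires knowing that $V_{\infty}$ generates an admissible weighted $\mathbf{C}^{\ast}$-degeneration $\sigma^{T}(t)(M)\to M_{\infty}$ inside $SL^{T}(N_{m}+1,\mathbf{C})$ --- the very point you flag as the main obstacle. The paper's ordering sidesteps this entirely, since for an Einstein limit the Futaki invariant vanishes on every Hamiltonian holomorphic vector field, in particular on whatever field generates the degeneration; adopting that ordering would make your argument both shorter and free of the admissibility gap.
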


\begin{proof}
Firstly, it follows from Corollary \ref{C52}, (\ref{54-b}) and (\ref{54}) that
$M_{\infty}$ must be Sasaki-Einstein. Moreover, the Lie algebra of all
Hamiltonian holomorphic vector fields is reductive (\cite{dt}, \cite{ber},
\cite{fow}). If $M_{\infty}$ is not equal to $M$, there is a family of the
weighted $\mathbf{C}^{\ast}$action $\{G(s)\}_{s\in C^{\ast}}\subset
SL^{T}(N_{m}+1,\mathbf{C})$ such that
\[
\Psi_{s}(M)=G(s)\circ \Psi_{1}(M)
\]
converges to the $S^{1}$-equivariant embedding of $M_{\infty}$ with respect to
the weighted $\mathbf{C}^{\ast}$action in $(\mathbf{CP}^{N_{m}},\omega_{FS}).$
Then the generalized Sasaki-Futaki invariant $f_{M_{\infty}}(V)$ of
$M_{\infty}$ vanishes as in (\ref{59}) and (\ref{59-1}).

On the other hand, by the assumption that $M$ is transverse $K$-stable, if
$M_{\infty}$ is not equal to $M$, then
\[
\operatorname{Re}f_{M_{\infty}}(V)>0.
\]
This is a contradiction. Hence $M_{\infty}=M$. Therefore there is a
Sasaki-Einstein metric on $M.$
\end{proof}


\begin{thebibliography}{99999}                                                                                            %


\bibitem[B]{b}D. Barden, Simply connected five-manifolds, Ann. of Math. (2) 82
(1965), 365-385.

\bibitem[BBEGZ]{bbegz}R. Berman, S. Boucksom, P. Eyssidieux, V. Guedj and A.
Zeriahi, K\"{a}hler-Einstein metrics and the K\"{a}hler-Ricci flow on log Fano
varieties, J. Reine Angew. Math. 751 (2019), 27-89.

\bibitem[Bel]{bel}F. A. Belgun, Normal CR structures on compact $3$-manifolds.
Math. Z. 238 (2001), no. 3, 441--460.

\bibitem[Ben]{ben}B. Chow, The Ricci flow on the 2-sphere, J. Differential
Geometry 33 (1991) 325-334.

\bibitem[Ber]{ber}B. Berndtsson, A Brunn-Minkowski type inequality for Fano
manifolds and some uniqueness theorems in K\"{a}hler geometry, Invent. math.
(2015) 200:149--200.

\bibitem[BG]{bg}C. P. Boyer and K. Galicki, Sasaki Geometry. Oxford
Mathematical Monographs. Oxford University Press, Oxford (2008).

\bibitem[BGS]{bgs}C. P. Boyer , K. Galicki and S. Simanca, Canonical Sasakian
metrics. Comm. Math. Phys. 279 (2008), no. 3, 705--733.

\bibitem[BM]{bm}S. Bando and T. Mabuchi, Uniqueness of Einstein K\"{a}hler
metrics modulo connected group actions, in: Algebraic geometry (Sendai 1985),
Adv. Stud. Pure Math. 10, North-Holland, Amsterdam (1987), 11--40.

\bibitem[Cao]{cao}H. Cao, Deformation of K\"{a}hler metrics to
K\"{a}hler-Einstein metrics on compact K\"{a}hler manifolds, Invent. Math.
81(1985), 359--372.

\bibitem[CC1]{cc1}J. Cheeger and T. H. Colding, Lower bounds on the Ricci
curvature and the almost rigidity of warped products, Ann. Math., 144 (1996), 189-237.

\bibitem[CC2]{cc2}J. Cheeger and T. H. Colding, On the structure of spaces
with Ricci curvature bounded below I, J. Diff. Geom., 46 (1997), 406-480.

\bibitem[CC3]{cc3}J. Cheeger and T. H. Colding, On the structure of spaces
with Ricci curvature bounded below II, J. Diff. Geom., 54 (2000), 13-35.

\bibitem[CCT]{cct}J. Cheeger, T. H. Colding and G. Tian,\textit{\ On the
singularities of spaces with bounded Ricci curvature}, Geom. Funct. Anal., 12
(2002), 873-914.

\bibitem[CDS1]{cds1}X. Chen, Simon Donaldson, and Song Sun,
K\"{a}hler-Einstein metrics on Fano manifolds. I, J. Amer. Math. Soc. 28
(2015), no. 1, 183--197.

\bibitem[CDS2]{cds2}X. Chen, Simon Donaldson, and Song Sun,
K\"{a}hler-Einstein metrics on Fano manifolds II, J. Amer. Math. Soc. 28
(2015), no. 1, 199--234.

\bibitem[CDS3]{cds3}X. Chen, Simon Donaldson, and Song Sun,
K\"{a}hler-Einstein metrics on Fano manifolds III, J. Amer. Math. Soc. 28
(2015), no. 1, 235--278.

\bibitem[CHLW]{chlw}S.-C. Chang, Y. Han, C. Lin and C.-T. Wu, Convergence of
the Sasaki-Ricci Flow on Sasakian $5$-Manifolds of General Type, arXiv:2203.00374.

\bibitem[CJ]{cj}T. Collins and A. Jacob, On the convergence of the
Sasaki-Ricci flow, Analysis, complex geometry, and mathematical physics: in
honor of Duong H. Phong, 11--21, Contemp. Math., 644, Amer. Math. Soc.,
Providence, RI, 2015.

\bibitem[CLW]{clw}S.-C. Chang, C. Lin and C.-T. Wu, Foliation divisorial
contraction by the Sasaki-Ricci flow on Sasakian $5$-manifolds, preprint.

\bibitem[CLW2]{clw2}S.-C. Chang, C. Lin and C.-T. Wu, On the existence of
conic Sasaki-Einstein metrics, in preparation.

\bibitem[CN]{cn}T. H. Colding and A. Naber, Sharp H\"{o}lder continuity of
tangent cones for spaces with a lower Ricci curvature bound and applications,
Ann. of Math., 176 (2012), 1173-1229.

\bibitem[Co1]{co1}T. Collins, The transverse entropy functional and the
Sasaki-Ricci flow, Trans. AMS., Volume 365, Number 3, March 2013, Pages 1277-1303.

\bibitem[Co2]{co2}T. Collins, Uniform Sobolev Inequality along the
Sasaki-Ricci Flow, J. Geom. Anal. 24 (2014), 1323--1336.

\bibitem[Co3]{co3}T. Collins, Stability and convergence of the Sasaki-Ricci
flow, J. reine angew. Math. 716 (2016), 1--27.

\bibitem[CSW]{csw}X, Chen, S. Sun and B. Wang, K\"{a}hler-Ricci flow,
K\"{a}hler-Einstein metric, and K-stability, Topol. 22 (2018) 3145-3173.

\bibitem[CT]{ct}T. Collins and V. Tosatti, K\"{a}hler currents and null loci,
Invent. math. (2015) 202, 1167-1198.

\bibitem[CZ1]{cz1}T. Collins and G. Szekelyhidi, K-semistability for irregular
Sasakian manifolds, J. Differential Geometry 109 (2018) 81-109.

\bibitem[CZ2]{cz2}T. Collins and G. Szekelyhidi, Sasaki-Einstein metrics and
K-stability, Geom. Topol. 23 (2019), no. 3, 1339--1413.

\bibitem[D]{d}S. K. Donaldson, Scalar curvature and stability of toric
varieties, J. Differential Geom. 62 (2002), 289-349.

\bibitem[DP]{dp}J. P. Demailly and M. Paun, Numerical characterization of the
K\"{a}hler cone of a compact K\"{a}hler manifold, Annals of Mathematics, 159
(2004), 1247-1274.

\bibitem[DK]{dk}J. P. Demailly and J. Kollar, Semi-continuity of complex
singularity exponents and K\"{a}hler-Einstein metrics on Fano manifolds, Ann.
Ec. Norm. Sup 34 (2001), 525-556.

\bibitem[DS]{ds}S. Donaldson and S. Sun, Gromov-Hausdorff limits of K\"{a}hler
manifolds and algebraic geometry, Acta Math. 213(1) (2014) 63--106.

\bibitem[DT]{dt}W. Ding and G. Tian, K\"{a}hler-Einstein metrics and the
generalized Futaki invariants. Invent. Math., 110 (1992), 315-335.

\bibitem[EKA]{eka}A. El Kacimi-Alaoui, Operateurs transversalement elliptiques
sur un feuilletage riemannien et applications, Compos. Math. 79 (1990) 57--106.

\bibitem[F]{f}A. Futaki, An obstruction to the existence of Einstein
K\"{a}hler metrics, Invent. Math. 73 (1983), 437--443.

\bibitem[FOW]{fow}A. Futaki, H. Ono and G.Wang, Transverse K\"{a}hler geometry
of Sasaki manifolds and toric Sasaki--Einstein manifolds, J. Differential
Geom. 83 (2009) 585--635.

\bibitem[Gei]{gei}H. Geiges, Normal Contact Structures on $3$-manifolds,
Tohoku Math. J. 49 (1997), 415-422.

\bibitem[GMSW]{gmsw}J. P. Gauntlett, D. Martelli, J. Sparks and D. Waldram,
Sasaki-Einstein Metrics on $\mathbf{S}^{2}\times \mathbf{S}^{3}$, Adv. Theor.
Math. Phys. 8 (2004), 711--734.

\bibitem[GKN]{gkn}M. Godlinski, W. Kopczynski and P. Nurowski, Locally
Sasakian manifolds, Classical Quantum Gravity 17 (2000) L105--L115.

\bibitem[H1]{h1}R. S. Hamilton, The Ricci flow on surfaces, Math, and General
Relativity, Contemporary Math. 71 (1988), 237-262.

\bibitem[H2]{h2}R. S. Hamilton, Three-manifolds with positive Ricci curvature,
J. Differential Geom. 17 (1982), no. 2, 255--306.

\bibitem[H3]{h3}R.S. Hamilton, The formation of singularities in the Ricci
flow, in Surveys in differential geometry, Vol. II (Cambridge, MA, 1993),
7--136, Int. Press, Cambridge, MA, 1995.

\bibitem[He]{he}W. He, The Sasaki-Ricci flow and compact Sasaki manifolds of
positive transverse holomorphic bisectional curvature, J. Geom. Anal. 23
(2013), 1876-931.

\bibitem[HLM]{hlm}C.-Y. Hsiao, X. Li and G. Marinescu, Equivariant Kodaira
embedding for CR manifolds with circle action, Michigan Math. J. 70 no.1
(2021), 55-113.

\bibitem[K1]{k1}J. Kollar, Singularities of pairs, Algebraic geometry, Santa
Cruz 1995, Proc. Sympos. Pure Math., vol. 62, Amer. Math. Soc., Providence, RI
(1997) 221-287.

\bibitem[K2]{k2}J. Kollar,\textit{\ }Einstein metrics on connected sums of
$\mathbf{S}^{2}\times \mathbf{S}^{3}$, J. Differential Geom. 75 (2007), no. 2, 259--272.

\bibitem[K3]{k3}J. Kollar, Einstein metrics on five-dimensional Seifert
bundles, J. Geom. Anal. 15 (2005), no. 3, 445--476.

\bibitem[LT]{lt}F. Luo and G. Tian, Liouville equation and spherical convex
polytopes, Proc Amer. Math. Soc. 116 (1992), no. 4, 1119-1129.

\bibitem[LZ]{lz}J. Liu and X. Zhang, The conical K\"{a}hler-Ricci flow on Fano
manifolds, Advances in Mathematics, 307(2017), 1324--1371.

\bibitem[M]{m}T. Mabuchi, K-energy maps integrating Futaki invariants, Tohoku
Math. J., 38, 245-257 (1986).

\bibitem[Mo]{mo}N. Mok, The uniformization theorem for compact K\"{a}hler
manifolds of nonnegative holomorphic bisectional curvature. J. Differ. Geom.
27(2) (1988), 179--214.

\bibitem[MSY]{msy}Dario Martelli, James Sparks and Shing--Tung Yau,
Sasaki--Einstein manifolds and volume minimisation, Communications in
Mathematical Physics, 280 (2008), 611--673

\bibitem[Na]{na}A. M. Nadel, Multiplier ideal sheaves and K\"{a}hler-Einstein
metrics of positive scalar curvature, Ann. of Math. (2) 132 (1990), no. 3, 549-596.

\bibitem[NS]{ns}Y. Nitta and K. Sekiya, A diameter bound for Sasaki manifolds
with applications to uniqueness for Sasaki-Einstein structure, preprint 2009, http://arxiv.org/abs/0906.0170.

\bibitem[NT]{nt}S. Nishikawa and P. Tondeur, Transversal infinitesimal
automorphisms for harmonic K\"{a}hler foliation, Tohoku Math. J., 40(1988), 599-611.

\bibitem[P1]{p1}G. Perelman, The entropy formula for the Ricci flow and its
geometric applications, preprint, arXiv: math.DG/0211159.

\bibitem[P2]{p2}G. Perelman, Ricci flow with surgery on three-manifolds,
preprint, arXiv: math.DG/0303109.

\bibitem[P3]{p3}G. Perelman, Finite extinction time for the solutions to the
Ricci flow on certain three-manifolds, preprint, arXiv: math.DG/0307245.

\bibitem[Paul1]{paul1}S. T. Paul, Hyperdiscriminant polytopes, Chow polytopes,
and Mabuchi energy asymptotics, Ann. Math., 175 (2012), 255-296.

\bibitem[Paul2]{paul2}S. T. Paul, A numerical criterion for K-energy maps of
algebraic manifolds, arXiv:1210.0924v1.

\bibitem[Pe]{pe}P. Petersen, Convergence theorems in Riemannian geometry, in
\textquotedblright Comparison Geometry\textquotedblright \ edited by K. Grove
and P. Petersen, MSRI Publications, vol 30 (1997), Cambridge Univ. Press, 167-202.

\bibitem[PSSW]{pssw}D. H. Phong, J. Song, J. Sturm and X. Wang, The Ricci-Flow
on the Sphere with Marked Points, J. Differential Geometry 114 (2020) 117-170.

\bibitem[PSSWe]{psswe}D. H. Phong, J. Song, J. Sturm and B. Weinkove, The
K\"{a}hler-Ricci flow and $\overline{\partial}$-operator on vector fields, J.
Differ. Geom., 81 (2009), 631-647.

\bibitem[PW1]{pw1}P. Petersen and G.F. Wei,\textit{\ }Relative volume
comparison with integral curvature bounds, Geom. Funct. Anal., 7 (1997), 1031-1045.

\bibitem[PW2]{pw2}P. Petersen and G.F. Wei, Analysis and geometry on manifolds
with integral Ricci curvature bounds,\ II, Trans. AMS., 353 (2001), 457-478.

\bibitem[RT]{rt}J. Ross and R.P. Thomas, Weighted projective embeddings,
stability of orbifolds and constant scalar curvature K\"{a}hler metrics, JDG
88 (2011), no. 1, 109-160.

\bibitem[Ru]{ru}P. Rukimbira, Chern-Hamilton's conjecture and K-contactness,
Houston J. Math. 21 (1995), no. 4, 709-718.

\bibitem[S]{s}S. Smale, On the structure of 5-manifolds\textit{,} Ann. of
Math. (2) 75 (1962), 38-46.

\bibitem[Sp]{sp}James Sparks, Sasaki-Einstein Manifolds, Surveys in
Differential Geometry 16 (2011), 265-324.

\bibitem[Shi]{shi}W.-X. Shi, Ricci deformation of the metric on complete
noncompact Riemannian manifolds, J. Diff. Geom., 30 (1989), 303-394.

\bibitem[ST]{st}N. Sesum and G. Tian, Bounding scalar curvature and diameter
along the K\"{a}hler Ricci Flow (after Perelman), J. Inst. of Math. Jussieu, 7
(2008), no. 3, 575-587.

\bibitem[SWZ]{swz}K. Smoczyk, G. Wang and Y. Zhang, The Sasaki-Ricci flow,
Internat. J. Math. 21 (2010), no. 7, 951--969.

\bibitem[T1]{t1}G. Tian, On Calabi's conjecture for complex surfaces with
positive first Chern class, Invent. Math., 101, (1990), 101-172.

\bibitem[T2]{t2}G. Tian, Partial $C^{0}$-estimates for K\"{a}hler-Einstein
metrics, Commun. Math. Stat., 1 (2013), 105-113.

\bibitem[T3]{t3}G. Tian, Kahler-Einstein metrics with positive scalar
curvature, Invent. Math. 137 (1997), no. 1, 1-37.

\bibitem[T4]{t4}G. Tian, Canonical Metrics in K\"{a}hler Geometry, Lectures in
Mathematics ETH Z\={u}rich, Birkh\u{u}user Verlag, Basel, 2000.

\bibitem[T5]{t5}G. Tian, K-stability and K\"{a}hler-Einstein metrics, Comm.
Pure Appl. Math. 68 (2015), no. 7, 1085--1156. Corrigendum: Comm. Pure Appl.
Math. 68 (2015), no. 11, 2082--2083.

\bibitem[Tro]{tro}M. Troyanov, Prescribing curvature on compact surfaces with
conical singularities, Trans. Amer. Math. Soc. 324 (1991), no. 2, 793-821.

\bibitem[TW]{tw}G. Tian and F. Wang, On the existence of conic
K\"{a}hler-Einstein metrics. Adv. Math. 375 (2020), 107413, 42 pp.

\bibitem[TZ]{tz}G. Tian and Z. Zhang, Regularity of K\"{a}ler-Ricci flow on
Fano manifolds, Acta Math. 216 (2016) No. 1, 127-176.

\bibitem[TZhu1]{tzhu1}G. Tian and X.-H. Zhu, Convergence of K\"{a}hler-Ricci
flow, J. Amer. Math. Soc. 20 (2007), 675-699.

\bibitem[TZhu2]{tzhu2}G. Tian and X.-H. Zhu, Convergence of K\"{a}hler-Ricci
flow on Fano manifolds II, J. Reine Angew. Math.. 678 (2013), 223-245.

\bibitem[Wu]{wu}L.-F. Wu, The Ricci-flow on $2$-orbifolds with positive
curvature, J. Diff. Geom. 44 (1991), no 2, 575-596.

\bibitem[WZ]{wz}G. Wang and Y. Zhang, The Sasaki--Ricci flow on Sasakian
$3$-spheres, Commun. Math. Stat. 1 (2013), no. 1, 43--71.

\bibitem[Y1]{y1}S.-T. Yau, On the Ricci curvature of a compact K\"{a}hler
manifold and the complex Monge-Amp\`{e}re equation,\textit{\ I}, Comm. Pure.
Appl. Math. 31 (1978), 339-411.

\bibitem[Y2]{y2}S.-T. Yau, Open problems in geometry, Proc. Symp. Pure Math.
54 (1993), 1-18.

\bibitem[Zh1]{zh1}X. Zhang, Some invariants in Sasakian geometry,
International Mathematics Research Notices, Volume 2011, Issue 15 (2011), 3335--3367.

\bibitem[Zh2]{zh2}X. Zhang, Energy Properness and Sasakian-Einstein metrics,
Commun. Math. Phys. 306 (2011), 229--260.
\end{thebibliography}
\end{document}